\newtheorem{theorem}{Theorem}[section]
\newtheorem{cor}[theorem]{Corollary}
\newtheorem{lem}[theorem]{Lemma}
\newtheorem{prop}[theorem]{Proposition}
\theoremstyle{definition}
\newtheorem{example}[theorem]{Example}
\newtheorem{defi}[theorem]{Definition}
\newtheorem{rem}[theorem]{Remark}
\numberwithin{equation}{section}
\DeclareMathOperator{\colim}{colim}
\DeclareMathOperator{\g}{\mathfrak g}
\DeclareMathOperator{\h}{\mathfrak h}
\DeclareMathOperator{\CE}{CE}
\DeclareMathOperator{\op}{op}
\DeclareMathOperator{\Hom}{Hom}
\DeclareMathOperator{\End}{End}
\DeclareMathOperator{\Mod}{Mod}
\def\MMod{\operatorname{\!-Mod}\nolimits}
\DeclareMathOperator{\Comod}{Comod}
\def\CComod{\operatorname{\!-Comod}\nolimits}
\def\CCtrmod{\operatorname{\!-Ctrmod}\nolimits}
\DeclareMathOperator*{\DGVect}{DGVect}
\DeclareMathOperator*{\DGVectpc}{DGVect_{pc}}
\DeclareMathOperator*{\DGVectop}{DGVect^{op}}
\DeclareMathOperator*{\RHom}{\operatorname{RHom}}
\DeclareMathOperator*{\Uc}{\operatorname{U^c}}
\DeclareMathOperator*{\Li}{\mathcal{L}}
\DeclareMathOperator*{\Tc}{\operatorname{T^c}}
\DeclareMathOperator*{\ad}{\operatorname{ad}}
\DeclareMathOperator*{\Tcon}{\operatorname{T_{con}}}
\DeclareMathOperator*{\Lc}{\mathcal{L}^c}
\DeclareMathOperator*{\Lclf}{\mathcal{L}^c_{lf}}
\DeclareMathOperator*{\Ccon}{C_{con}}
\DeclareMathOperator*{\gcon}{\mathfrak{g}_{con}}
\DeclareMathOperator*{\Lcon}{\mathcal{L}_{con}}
\DeclareMathOperator*{\Ucon}{U_{con}}
\DeclareMathOperator*{\Scon}{S_{con}}
\DeclareMathOperator*{\Sc}{S^c}
\DeclareMathOperator*{\Tcres}{T^c_{res}}
\def\CComodcon{\operatorname{\!-Comod_{con}}\nolimits}
\DeclareMathOperator*{\Bcon}{B_{con}}
\DeclareMathOperator{\PerfII}{Perf^{\mathrm{II}}}
\DeclareMathOperator{\U}{U}
\DeclareMathOperator{\Harr}{Harr}
\DeclareMathOperator{\Tres}{T_{res}\spcheck}
\DeclareMathOperator{\MC}{MC}
\def\ground{\mathbf{k}}
\def\Z{\mathbb{Z}}
\thanks{}
\begin{document}
\begin{abstract}
A Koszul duality-type correspondence between coderived categories of conilpotent differential graded Lie coalgebras and their Chevalley-Eilenberg differential graded algebras is established. This gives an interpretation of Lie coalgebra cohomology as a certain kind of derived functor. A similar correspondence is proved for coderived categories of commutative cofibrant differential graded algebras and their Harrison differential graded Lie coalgebras.
\end{abstract}
\title[Cohomology of Lie coalgebras]{Cohomology of Lie Coalgebras}
\author{Joseph Chuang}
\address{Department of Mathematics, City, University of London, London EC1V 0HB, UK}
	\email{Joseph.Chuang.1@city.ac.uk}
\author{Andrey Lazarev}
	\address{Department of Mathematics and Statistics, Lancaster University, Lancaster LA1 4YF, UK}
	\email{a.lazarev@lancaster.ac.uk}
	
	\author{Yunhe Sheng}
	\address{Department of Mathematics, Jilin University, Changchun 130012, Jilin, China}
	\email{shengyh@jlu.edu.cn}
	
	\author{Rong Tang}
	\address{Department of Mathematics, Jilin University, Changchun 130012, Jilin, China}
	\email{tangrong@jlu.edu.cn}
\thanks{This work was partially supported by EPSRC grants EP/T029455/1, EP/N016505/1, NSFC (12471060,12371029)  and the Fundamental Research Funds for the Central Universities.}
\renewcommand{\thefootnote}{}
\footnotetext{2020 Mathematics Subject Classification.
17B62, 
17B56, 
18N40
}
\keywords{Lie coalgebra, comodule, cohomology, Koszul duality}
\maketitle
\tableofcontents
\section{Introduction}
Cohomology of Lie algebras was introduced over 70 years ago by C. Chevalley and S. Eilenberg and given a standard textbook treatment in the classical treatise  of H. Cartan and S. Eilenberg \cite{CE}. It is conceptually well-understood and is, by now, a standard tool used in Lie theory, representation theory and homological algebra.

By contrast, cohomology of Lie coalgebras has not been as well studied, even though its definition through the standard complex is straightforward, and low-dimensional groups have similar interpretation as in the Lie algebra case \cite{DT}. The reason is that Lie coalgebras and comodules over them are much less understood in general than Lie algebras and their modules, and many of their properties are neither dual nor similar to their Lie counterparts.

To a lesser extent, the same could be said about associative algebras and modules vs coassociative coalgebras and their comodules. However, the structure theory of coassociative coalgebras and comodules has long been well established, and their homological algebra has, somewhat recently, been completely settled in the framework of differential graded (dg) Koszul duality \cite{Posi}. A rather striking consequence of the latter is that from a homotopy-theoretic (or infinity-categorical) perspective, associative algebras and their modules behave similarly to coassociative coalgebras and comodules (rather than dually as might naively be conjectured).

In light of these developments, it is natural to ask whether similar results hold in the Lie and coLie cases. A relevant Koszul duality relates Lie algebras and their modules to cocommutative coalgebras and comodules over them; this theory goes back to Quillen's rational homotopy theory \cite{Qui} and was established in its natural generality by Hinich \cite{Hinich}; it underpins a modern approach to deformation theory. There is also a suitable module-comodule correspondence sketched in \cite[Example 6.6]{Posi}.

Similarly, a Koszul correspondence between Lie coalgebras  and commutative algebras was constructed in \cite{LM} in the context of disconnected rational homotopy theory. The Lie coalgebras considered in op. cit. were \emph{conilpotent}, i.e. filtered colimits of finite dimensional Lie coalgebras whose dual Lie algebras are nilpotent. This excludes large classes of Lie coalgebras (e.g. those whose duals are semisimple Lie algebras). There is a good reason (discussed later in the present paper, cf. Example \ref{ex:sl2}) why Koszul duality cannot be extended to such Lie coalgebras. However, even for conilpotent Lie coalgebras the Koszul correspondence on the module-comodule level is far from obvious, the reason being that, unlike in the associative case, not every comodule over a conilpotent Lie coalgebra is itself conilpotent. This is the main technical difficulty that we grapple with in this paper.

Given a dg Lie coalgebra $\g$, we consider its category of \emph{locally finite} comodules, i.e. comodules that are unions of finite-dimensional ones. This category is isomorphic to the category of comodules over $\U^c(\g)$, the universal enveloping coalgebra of $\g$. As such, it has the structure of a model category whose homotopy category is called the \emph{coderived category} of $\g$. Let $\CE(\g)$ be the Chevalley-Eilenberg complex of $\g$ which is a dg {commutative} algebra. The category of $\CE(\g)$-modules has a model category of its own, and thus, a homotopy category, called the \emph{coderived} category of $\CE(\g)$; note that this model structure is finer than the ordinary projective one whose homotopy category is the \emph{derived} category of $\CE(\g)$.

Our first main result, Theorem \ref{thm:coreflective} is that there is a coreflective Quillen adjunction between comodules over $\g$ and modules over $\CE(\g)$, provided that $\g$ is conilpotent; it becomes a Quillen equivalence under the additional assumption that $\g$ is non-negatively graded (Theorem \ref{thm:nonneg}).
As a consequence, we are able to interpret the Chevalley-Eilenberg cohomology of $\g$ with coefficients in a $\g$-comodule as a derived functor (Corollary \ref{cor:derived}). It is quite surprising that the assumption of non-negative grading needs to be imposed for the Quillen equivalence statement, however the example of the coderived category of the polynomial de Rham algebra, cf. Example \ref{ex:deRham} below, shows that it is unavoidable.

Our second main result gives a similar treatment for modules over \emph{cofibrant} dg commutative algebras and comodules over their Harrison dg Lie coalgebras; here the condition of cofibrancy is parallel to the condition of conilpotency of Lie coalgebras and the result does not hold without it. This is Theorem \ref{thm:harrison}.

Dg Lie coalgebras appear naturally in rational homotopy theory as primitive elements in the cobar constructions of Sullivan minimal models of topological spaces and in connection with the problem of finding homotopy periods, cf \cite{SW, LM}. Modules over Sullivan minimal models correspond, according to our results, to comodules over the corresponding Harrison {dg Lie} coalgebras and, in the case of smooth manifolds, give rise to modules over the corresponding de Rham algebras. Such modules are known under various other names: flat superconnections, $\infty$-local systems or cohomologically locally constant sheaves, and they have been well-studied, e.g. \cite{SW, AVV, BS, CHL} and are, therefore, abundant in nature. It would be interesting to make a systematic study of the relationship between $\infty$-local systems and comodules over  the corresponding Lie coalgebras but we will leave it for future work.

The organization of the paper is as follows. Section \ref{sec:2} introduces notations and conventions used in this paper. Section \ref{section:preliminaries} is a crash course on the fundamentals of Lie coalgebras, including the construction of a universal enveloping coalgebra and the analogue of the PBW theorem. We put the focus on  conceptual explanation and on instructive examples; a much more detailed treatment  is given in \cite{Mic}. Of some independent interest here is the notion of a Lie-conilpotent {coassociative} coalgebra and the construction of a cofree Lie-conilpotent {coassociative} coalgebra sitting between a cofree {coassociative} coalgebra and a  cofree conilpotent {coassociative} coalgebra, cf. Proposition \ref{prop:Lieconilpotent} below.
 Section \ref{section:Koszul1} contains the Koszul duality statement for conilpotent Lie coalgebras and all necessary preliminaries. A much simpler result involving conilpotent comodules is also proved; this is Theorem \ref{thm:conilduality}. The Koszul equivalence for cofibrant commutative algebras is proved in Section \ref{section:cofibrant} as well as a much simpler statement involving conilpotent comodules over the Harrison {dg} Lie coalgebra (Theorem \ref{thm:conil2}). At the end of this section, a relationship between Lie comodules and $\infty$-local systems is outlined.

\section{Notation and conventions}\label{sec:2} We will work in the category
$\DGVect$
of differential graded (dg) vector spaces over a fixed field $\ground$ of characteristic zero.
The objects of
$\DGVect$
are $\ground$-vector spaces with cohomological $\mathbb{Z}$-grading, supplied with a differential of degree $1$; morphisms are linear maps commuting with the differential. The degree of an element $x$ in a graded vector space will be denoted by $|x|$.
Given a dg vector space $V$, its suspension $\Sigma V$ is another dg vector space with $(\Sigma V)^i=V^{i+1}$. Under the functor $*$ of $\ground$-linear duality, we have $(V^*)^i:=(V^{-i})^*$.

We will use the language of pseudocompact (pc) vector spaces. A pc (graded) vector space is a projective limit of finite-dimensional vector spaces and morphisms of pc vector spaces are always assumed to be continuous with respect to the linearly compact topology of the projective limit.   Similarly a pc dg vector space is a pc graded vector space supplied with a graded differential of cohomological degree 1 squaring to zero. The category of pc dg vector spaces will be denoted by $\DGVectpc$. The functor of continuous linear duality establishes an anti-equivalence between the categories of (discrete) graded vector spaces and pc graded vector spaces and so the category $\DGVectop$, the opposite to $\DGVect$, is equivalent to $\DGVectpc$.  Since $\DGVect$ is symmetric monoidal with respect to the graded tensor product, so is $\DGVectop$ and therefore, $\DGVectpc$. The monoidal product of two pc graded spaces $V$ and $U$ will be denoted by $V\otimes U$; note that this is \emph{not} the tensor product of $V$ and $U$ viewed as discrete graded vector spaces unless either $V$ or $U$ is finite-dimensional.

A dg algebra is just a monoid in $\DGVect$; it is thus a dg vector space
$A$ supplied with an associative and unital multiplication $A\otimes A\to A$. Similarly, a dg pc algebra is a monoid in $\DGVectpc$; by the anti-equivalence of the latter category with $\DGVect$, a dg pc algebra is the same as a comonoid in $\DGVect$, i.e. a dg coalgebra. A dg Lie algebra is a Lie object in $\DGVect$, i.e. a dg vector space $\g$ supplied with an anti-commutative bracket $\g\otimes\g\to \g$ satisfying the Jacobi identity; similarly, a dg pc Lie algebra is a Lie object in $\DGVectpc$. A dg Lie coalgebra is a vector space $\g$ with the cobracket $\g\to \g\otimes \g$ making the dual space $\g^*$ into a Lie object in $\DGVectpc$. Since a Lie object in $\DGVectpc$ is equivalent to a Lie co-object in $\DGVect$, then a  dg pc Lie algebra carries the same information as a dg Lie coalgebra.

Given a dg algebra or dg Lie algebra $A$, we consider its category $\Mod(A)$ of left dg $A$-modules; these are dg vector spaces $V$ supplied with an action map $A\otimes V\to V$ satisfying the usual axioms. If $A$ is a dg coalgebra or a dg Lie coalgebra, we can similarly denote by $\Comod(A)$ the category of left dg $A$-comodules. We also need the notion of a bicomodule over a dg coalgebra $A$; it is a dg vector space supplied with a left coaction $V\to A\otimes V$ and a right action $V\to V\otimes A$, compatible in the usual way. For a dg algebra $A$, a twisted $A$-module is a dg module whose underlying graded $A$-module is free, i.e. it is isomorphic to $A\otimes V$ for a graded vector space $V$; if $\dim V<\infty$, we say that the twisted module is finitely generated. Similarly, a twisted comodule over a dg coalgebra $C$ is one whose underlying graded $C$-comodule is cofree, i.e. of the form $C\otimes V$ for a graded vector space $V$. The category $\Mod(A)$ for a dg algebra $A$ has a model structure with weak equivalences being quasi-isomorphisms; similarly for a dg coalgebra $A$, the category $\Comod(A)$ has a model structure with weak equivalences being maps with coacyclic cofiber \cite[Section 8]{Posi}.

To alleviate the language, from now on, we will generally omit the modifier `dg' in the rest of the paper; e.g. `Lie coalgebra' will mean `dg Lie coalgebra', modules and comodules will likewise be assumed to be dg etc.

\section{Lie coalgebras, their comodules and universal enveloping coalgebras}\label{section:preliminaries}

\subsection{Universal enveloping coalgebras of Lie coalgebras}
Let $\g$ be a Lie coalgebra. Then its linear dual $\g^*$ is a pc Lie algebra and the functor of (continuous) linear duality establishes an anti-equivalence between Lie coalgebras and pc Lie algebras.  Furthermore, there is a functor associating to any coassociative coalgebra its cocommutator Lie coalgebra. This functor $C\mapsto L(C)$ is easiest to describe using the anti-equivalence mentioned above. Namely, $L(C)$ is the Lie coalgebra that is continuous linear dual to the pc Lie algebra $L(C^*)$, the commutator Lie algebra of the pc algebra $C^*$. The following result holds:

\begin{theorem}
	The functor $C\mapsto L(C)$ from {coassociative coalgebras} to {Lie coalgebras} admits a right adjoint, $\g\mapsto \Uc(\g)$, called the \emph{universal enveloping coalgebra} of the Lie coalgebra $\g$.
\end{theorem}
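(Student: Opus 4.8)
The plan is to dualize the construction of the universal enveloping algebra of a Lie algebra and transport it through the anti-equivalence between Lie coalgebras and pc Lie algebras. Recall that for a pc Lie algebra $L$, its universal enveloping algebra in the pc setting, $U(L)$, is the completed tensor algebra $T(L) = \prod_{n\geq 0} L^{\otimes n}$ (monoidal product in $\DGVectpc$) modulo the closed two-sided ideal generated by $xy - yx - [x,y]$; this is a pc algebra, and it is the left adjoint to the commutator functor $A \mapsto L(A)$ from pc algebras to pc Lie algebras, i.e. $\Hom_{\mathrm{pcAlg}}(U(L), A) \cong \Hom_{\mathrm{pcLie}}(L, L(A))$. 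Applying continuous linear duality, which is an anti-equivalence $\DGVectpc \simeq \DGVect$ carrying pc algebras to coassociative coalgebras and pc Lie algebras to Lie coalgebras and reversing the direction of all arrows, turns this left adjoint into a right adjoint. Concretely, I would define $\Uc(\g) := (U(\g^*))^*$, the continuous dual of the pc enveloping algebra of the pc Lie algebra $\g^*$. Since the cocommutator functor $C \mapsto L(C)$ is by definition the dual of the commutator functor $A \mapsto L(A)$, the adjunction $U \dashv L$ for pc objects dualizes verbatim to an adjunction $L \dashv \Uc$ for coalgebras and Lie coalgebras:
\[
\Hom_{\mathrm{coAlg}}(L(C), \g) \;\cong\; \Hom_{\mathrm{pcLie}}(\g^*, L(C^*)) \;\cong\; \Hom_{\mathrm{pcAlg}}(U(\g^*), C^*) \;\cong\; \Hom_{\mathrm{coAlg}}(C, \Uc(\g)).
\]

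The key steps, in order, are: (1) verify that $U(L)$ as defined above is genuinely a well-defined monoid in $\DGVectpc$ — i.e. that the completed tensor algebra is a pc algebra and that quotienting by the closed ideal stays within $\DGVectpc$ — and that it satisfies the stated universal property in the pc category; (2) check that continuous linear duality carries monoids in $\DGVectpc$ to comonoids in $\DGVect$ and Lie objects in $\DGVectpc$ to Lie co-objects in $\DGVect$ (this is essentially recorded already in Section \ref{sec:2}), and that it carries the commutator functor on pc algebras to the stated cocommutator functor $L(-)$ on coalgebras — the latter holds by the very definition of $L(C)$; (3) assemble the chain of natural isomorphisms displayed above, checking naturality in both variables, and conclude that $\g \mapsto \Uc(\g)$ is right adjoint to $C \mapsto L(C)$.

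The main obstacle is step (1): one must be careful that the pc analogue of the enveloping algebra behaves well. The completed tensor coalgebra side is cleaner — equivalently, $\Uc(\g)$ is the subcoalgebra of the cofree coalgebra on $\g$ cut out by coassociativity and the co-Jacobi/co-Leibniz relations — but verifying that the relevant (co)ideal is closed, so that the quotient (resp. sub) remains an honest object of the right category, and that no completeness issue obstructs the universal property, requires some care. An alternative, perhaps cleaner route that sidesteps explicit generators-and-relations is to invoke the general adjoint functor theorem: the functor $C \mapsto L(C)$ preserves colimits (it is, after dualization, a functor between locally presentable categories that preserves limits, hence the dual statement), so a right adjoint exists formally; one then identifies it with the dual-enveloping-algebra construction afterwards. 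I expect the generators-and-relations construction to be the one actually carried out, with the solution set / adjoint functor argument as a backup for well-definedness.
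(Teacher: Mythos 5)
There is a genuine error in step (1), and it is precisely the subtlety this paper is organized around. The free monoid on a pc vector space $U$ in $\DGVectpc$ is \emph{not} the completed tensor algebra $\prod_{n\geq 0}U^{\otimes n}$: that object is the completion of $T(U)$ at the augmentation ideal, and a continuous linear map $U\to A$ extends to an algebra map out of $\prod_n U^{\otimes n}$ only when the image of $U$ in each finite-dimensional quotient of $A$ is ``topologically nilpotent''; for a general pc algebra $A$ (already for $A=\End(M)$ with $M$ finite-dimensional) the extension fails, so the middle isomorphism in your displayed chain breaks. The actual free pc algebra on $U$ is $\check{T}(U)$, the completion of $T(U)$ by all open two-sided ideals of finite codimension, which is dual to the Sweedler-type cofree coalgebra $\Tc(U^*)$ rather than to the conilpotent one $\Tcon(U^*)$. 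Consequently your construction, carried out correctly, produces $\Ucon(\g)=[\Uc(\g)]_{\operatorname{con}}$ rather than $\Uc(\g)$; these differ drastically (for $\g$ dual to a semisimple Lie algebra, $\Ucon(\g)\cong\ground$ while $\Uc(\g)$ is a large cosemisimple coalgebra), and only the latter is right adjoint to $L$. The same confusion infects your parenthetical description of $\Uc(\g)$ as a ``subcoalgebra of the cofree coalgebra cut out by relations'': one must quotient the dual pc algebra $\check{T}(\g^*)$ by the closed ideal generated by $xy-yx-[x,y]$ and then dualize back, and a further wrinkle (invisible from the generators-and-relations picture) is that the resulting counit $L(\Uc(\g))\to\g$ only hits the largest locally finite Lie subcoalgebra of $\g$, which is why the paper's construction begins by replacing $\g$ with that subcoalgebra and then takes $\colim_\alpha\U^0(\g_\alpha^*)$ over finite-dimensional Lie subcoalgebras.

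For what it is worth, your overall strategy of dualizing an adjunction through the anti-equivalence $\DGVect^{\op}\simeq\DGVectpc$ is sound in outline, and your fallback via the adjoint functor theorem does work if stated on the coalgebra side directly: $C\mapsto L(C)$ is computed on underlying vector spaces, both coalgebras and Lie coalgebras form locally presentable categories whose forgetful functors create colimits, so $L$ preserves colimits and a right adjoint exists formally. (As you phrase it --- ``after dualization, a functor between locally presentable categories that preserves limits'' --- the statement is garbled, since the dualized categories are the opposites and are not locally presentable.) The paper itself does not give a construction in its proof; it cites Michaelis for the non-dg case and observes the dg extension is routine, with the explicit description relegated to the following remark.
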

\begin{proof}
	In the non-dg case this is established in \cite{Mic} and the extension to the dg case is straightforward.
\end{proof}	
\begin{rem}
	The construction of $\Uc(\g)$ is given explicitly as follows. First assume that $\g$ is locally finite, in other words, it is the union of its finite dimensional Lie subcoalgebras; if this is not the case, then replace $\g$ by its largest locally finite Lie subcoalgebra.
	It is thus sufficient to define $\Uc(\g)$ for a finite dimensional
	$\g$ since in general
	$\Uc(\g)\cong \colim_{\alpha} \Uc({\g}_{\alpha})$
	where the colimit is taken over all finite dimensional Lie subcoalgebras of $\g$.
	For a finite dimensional $\g$, the coalgebra $\Uc(\g)$ is defined as $\U^0(\g^*)$, the Sweedler dual of the ordinary enveloping algebra $\U(\g^*)$ of the Lie algebra $\g^*$; see \cite[Chapter 6]{sweedler} regarding the notion of the Sweedler dual. Recall that for an algebra $A$, its Sweedler dual is the set of linear functions $f$ on $A$ such that $\ker f$ contains an ideal of finite codimension; in the dg setting this is modified by requiring that $\ker f$ contains a \emph{dg} ideal of finite codimension. More precisely, the Sweedler dual of an associative algebra $A$ is given by ${A}^0:=\varinjlim_\alpha (A/I_{\alpha})^*$ where the direct limit is taken over all two-sided dg ideals of finite codimension.
	Equivalently, $\Uc(\g)$ is defined by setting
	$[\Uc(\g)]^*:=\check{{\U}}(\g^*)$,
	the pseudo-compact completion of $\U(\g^*)$. The latter is defined for any  algebra $A$ as $\check{A}:=\varprojlim_\alpha A_\alpha$ where the inverse limit is taken over all finite-dimensional quotient algebras $A_\alpha$ of $A$.
\end{rem}
\begin{rem}
	The coalgebra $\Uc(\g)$ is a Hopf algebra. Indeed, let $\g$ be finite-dimensional, then the ordinary enveloping algebra
	$\U(\g^*)$ of $\g^*$ is known to be a Hopf algebra with the coproduct $\Delta:\U(\g^*)\to\U(\g^*)\otimes\U(\g^*)$ and
	this map induces a coproduct on the pseudocompact completion
	$\check{\U}(\g^*) \to\check{\U}(\g^*)\hat{\otimes}\check{\U}(\g^*)$
		making $\check{\U}(\g^*)$ into a pseudocompact bialgebra and dualizing, we see that $\Uc(\g)$ is a bialgebra; the existence of an antipode is likewise straightforward.
\end{rem}
\begin{rem}
	Corresponding to the identity map on $\Uc(\g)$ is a map of Lie coalgebras $\Uc(\g)\to\g$ (or its more familiar dual $\g^*\to \check{\U}(\g^*)$). According to \cite{Mic}, the latter map is surjective if and only if $\g$ is locally finite. In contrast, for nonlocally-finite  Lie coalgebras, this map can even be zero; this happens when a Lie coalgebra has no nontrivial finite-dimensional Lie subalgebras. For example, such is the Lie coalgebra whose linear dual is the Lie algebra of derivations of the formal power series ring $\ground[[z]]$.
\end{rem}
Recall that a comodule over a Lie coalgebra $\g$ is a vector space $M$ together with a coaction $M\to \g\otimes M$  satisfying a certain comodule analogue of the coJacobi identity, cf. for example \cite[Definition 2.3]{DT}. The above coaction is equivalent to a map $\g^*\otimes M^*\to M^*$ and if $\dim M< \infty$   this is further equivalent to a map of pc Lie algebras $\g^*\to \End(M^*)\cong \End(M)$ specifying a continuous action of the pc Lie algebra $\g^*$ on the finite-dimensional vector space $M$.

Recall that modules over Lie algebras are in one-to-one correspondence with modules over their universal enveloping algebras. For Lie coalgebras, the analogous statement is formulated as follows.

\begin{prop}
	Let $\g$ be a Lie coalgebra. Then $\Uc(\g)\CComod$, the category of  comodules over $\Uc(\g)$ is isomorphic to the category of locally finite $\g$-comodules, i.e. those $\g$-comodules which are unions of its finite-dimensional subcomodules.
\end{prop}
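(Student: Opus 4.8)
The plan is to reduce to finite-dimensional comodules over finite-dimensional Lie subcoalgebras, where the statement becomes pure finite-dimensional duality together with the universal property of the enveloping algebra, and then to bootstrap back by passing to filtered colimits. Recall first that a comodule over the coassociative coalgebra $\Uc(\g)$ is the filtered union of its finite-dimensional (dg) subcomodules, by the fundamental theorem of comodules, while a locally finite $\g$-comodule is such a union by definition. Moreover, since a map out of a finite-dimensional space into a filtered colimit factors through one of the terms, and since $\g=\colim_\alpha\g_\alpha$ and $\Uc(\g)=\colim_\alpha\Uc(\g_\alpha)$ over the finite-dimensional Lie subcoalgebras $\g_\alpha\subseteq\g$ (filtered colimits, computed on underlying spaces), every finite-dimensional $\g$-subcomodule is a comodule over some $\g_\alpha$, and every finite-dimensional $\Uc(\g)$-subcomodule is a comodule over some $\Uc(\g_\alpha)$. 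Hence it is enough to produce, for each finite-dimensional Lie coalgebra $\h$, a natural isomorphism between the categories of finite-dimensional $\Uc(\h)$-comodules and of finite-dimensional $\h$-comodules, compatible with inclusions $\h\hookrightarrow\h'$; the statement then follows by passing to filtered colimits, once one checks (routinely) that $\Hom$ out of a finite-dimensional comodule commutes with these colimits, so that both categories are indeed the ind-completions of their finite-dimensional parts, morphisms included.

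For a finite-dimensional Lie coalgebra $\h$ the dual $\h^*$ is a finite-dimensional Lie algebra, and for a finite-dimensional vector space $M$ there is the following chain of natural bijections, each fixing the underlying dg vector space. A $\h$-comodule structure $M\to\h\otimes M$ corresponds, by $\ground$-linear duality, to a map $\h^*\otimes M^*\to M^*$, and the comodule coJacobi identity (cf.\ \cite[Definition 2.3]{DT}) translates exactly into the axiom for an $\h^*$-module structure on $M^*$; by the universal property of the enveloping algebra this is the same as a $\U(\h^*)$-module structure on $M^*$; such a structure has finite-codimensional kernel, hence factors through one of the finite-dimensional quotient algebras whose inverse limit defines $\check{\U}(\h^*)$, and so extends uniquely to a continuous $\check{\U}(\h^*)$-module structure on $M^*$ (the inverse being restriction along the canonical map $\U(\h^*)\to\check{\U}(\h^*)$); and finally, since $[\Uc(\h)]^*=\check{\U}(\h^*)$, continuous finite-dimensional $\check{\U}(\h^*)$-modules $M^*$ are exactly finite-dimensional $\Uc(\h)$-comodules $M$. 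The composite is the desired isomorphism; it is implemented by corestriction along the canonical Lie coalgebra map $\Uc(\h)\to\h$, which makes its naturality in $\h$, and hence compatibility with the inclusions, transparent.

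The inputs here that are not purely formal are the fundamental theorem of comodules and the universal properties of $\U$ and of its pseudocompact completion; the rest is bookkeeping, and the part I expect to require the most care is precisely that bookkeeping: checking that the finite-dimensional equivalence is natural and compatible with all transition maps, and that locally finite $\g$-comodules form the ind-completion of the finite-dimensional ones at the level of morphisms, not merely of objects. One should also note at the outset that if $\g$ is not locally finite it is replaced by its largest locally finite Lie subcoalgebra $\g_{lf}$, which changes neither side of the statement: $\Uc(\g)=\Uc(\g_{lf})$ by construction, and by the finite-subcoalgebra argument above every locally finite $\g$-comodule is automatically a $\g_{lf}$-comodule.
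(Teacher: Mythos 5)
Your argument is correct and is essentially the paper's: a finite-dimensional $\g$-comodule is dualized to a continuous action of the pc Lie algebra $\g^*$ on $M^*$, the universal property of the (completed) enveloping algebra converts this into a $\check{\U}(\g^*)$-module, i.e.\ a $\Uc(\g)$-comodule, and both categories are then identified with the unions of their finite-dimensional subcomodules. The only real difference is cosmetic: you insert an extra reduction to finite-dimensional Lie subcoalgebras $\g_\alpha$ before dualizing, whereas the paper applies the universal property directly to the continuous map $\g^*\to\End(M)$; your version also spells out the filtered-colimit bookkeeping that the paper leaves implicit.
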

	
\begin{proof}
Let $M$ be a finite dimensional $\g$-comodule determined by the coaction  $M\to \g\otimes M$. This coaction is equivalent to a \emph{continuous} map $\g^*\to\End(M)$, and the comodule condition translates into the latter map being a homomorphism  of  pc Lie algebras. This, by the universal property of the universal enveloping algebra, is equivalent to a continuous algebra map $[\Uc(\g)]^*\to \End(M)$ or to a $\Uc(\g)$-comodule structure on $M$. So we obtain  an isomorphism between the categories of finite-dimensional $\g$-comodules and finite-dimensional $\Uc(\g)$-comodules. This extends to an isomorphism between locally finite $\g$-comodules and	locally finite $\Uc(\g)$-comodules (which are all $\Uc(\g)$-comodules).
\end{proof}	
\begin{rem}
	Since $\Uc(\g)$  is a Hopf algebra, it follows that locally finite $\g$-comodules have the structure of the tensor category; given two locally finite $\g$-comodules  $M$ and $N$, their tensor product $M\otimes N$ as well as $\Hom(M,N)$ have structures of (locally finite) $\g$-comodules.
\end{rem}
\begin{example}
	Let $\g$ be the dual of a semisimple Lie algebra; assume that $\ground$ is algebraically closed. Then every finite-dimensional (and therefore locally finite) representation of $\g^*$ is semisimple, so every $\Uc(\g)$-comodule and it follows that the coalgebra $\Uc(\g)$ is cosemisimple. As such, it is a sum of simple (matrix) coalgebras $\sum_{\alpha} \End(V_\alpha)$, where $V_\alpha$ ranges through isoclasses of irreducible representations of $\g^*$.
	
	It is instructive to compare $[\Uc(\g)]^*\cong \check{\U}(\g^*)$ with $\U(\g^*)$,
the usual universal enveloping algebra of $\g^*$. By the argument above, we have
$\check{\U}(\g^*)\cong \prod_{\alpha}  \End(V_\alpha)$. The completion map $\U(\g^*)\mapsto \check{\U}(\g^*)$  is determined by the collection of action maps $\U(\g^*)\to \End(V_\alpha)$ associated with the given irreducible representations of $\g^*$ on $V_\alpha$.  It is well-known that any nonzero element in $\U(\g^*)$ acts nontrivially in some
finite-dimensional representation of $\g^*$, and thus the completion map embeds
$\U(\g^*)$ as a subalgebra of $\check{\U}(\g^*)$ containing $\sum_\alpha\End(V_\alpha)$. In other words, we have the following inclusions of associative algebras:
\[
\sum_\alpha\End(V_\alpha)\subset \U(\g^*)\subset \prod_\alpha\End(V_\alpha)\cong \check{\U}(\g^*).
\]	
\end{example}
\begin{example}\label{ex:abelian}
	Let $\g$ be the one-dimensional abelian Lie coalgebra. In that case $\Uc(\g)$ is the cofree coalgebra on $\ground$. Again, let us assume that $\ground$ is algebraically closed. In that case $\check{\U}(\g^*)$ is the pc completion of the polynomial algebra $\ground[x]$, and it is isomorphic to $\prod_{\alpha}\ground[[x_\alpha]]$ where $\alpha$ runs through all points of $\ground$. The completion map $\ground[x]\to \prod_{\alpha}\ground[[x_\alpha]]$ takes $x\in\ground[x]$ to $\prod_{\alpha}(
	x_\alpha-\alpha)$.
	
	It can also be expressed as the convolution algebra $\Hom(\ground[\ground],\ground[[x]])\cong \operatorname{Map}(\ground,\ground[[x]])$ where $\ground[\ground]$ is the group algebra of the additive group of $\ground$ and $\operatorname{Map}(\ground,\ground[[x]])$ stands for the set of maps of sets $\ground\to\ground[[x]]$. Indeed, the algebra $\operatorname{Map}(\ground,\ground[[x]])$ with the pointwise multiplication is clearly isomorphic to $\prod_{\alpha}\ground[[x_\alpha]]$.
	
	Moreover, $\check{\U}(\g^*)$ is a Hopf algebra where its diagonal is induced from the multiplication on $\ground[\ground]$ and the diagonal on $\ground[x]$ with $x$ being primitive.
	
	If $\ground$ has characteristic zero (as per our standing assumption), we can also express $\Uc(\g)$ as $\ground[\ground]\otimes\ground[x]$. As a Hopf algebra, it is a tensor product of Hopf algebras $\ground[\ground]$ and $\ground[x]$ where $\ground[\ground]$ has the diagonal as a group algebra and the diagonal in $\ground[x]$ is specified by requiring that $x$ be primitive.

\end{example}

\begin{example}\label{ex:cofree} This is a generalization of Example \ref{ex:abelian}.
	Let $V$ be a vector space. There is a functor $V\mapsto \Lc(V)$ associating to $V$ the cofree Lie coalgebra, cf. \cite{Mic}. The functor $V\mapsto \Lc(V)$ is right adjoint to the forgetful functor from Lie coalgebras to vector spaces. The union of finite-dimensional Lie subcoalgebras in $\Lc(V)$ is the cofree \emph{locally finite} Lie coalgebra on $V$;
	 we denote it by $\Lclf(V)$. Then $\Uc(\Lc(V))\cong \Uc(\Lclf(V))\cong \Tc(V)$ is the cofree coassociative coalgebra on $V$, see \cite[Theorem 6.4.1]{sweedler} regarding the construction of $\Tc(V)$. For $V$ finite-dimensional, it can be described as the Sweedler dual to the free algebra ${\rm T}(V^*)$ and in general, as the colimit of the coalgebras $\Tc(W)$ where $W$ ranges through finite-dimensional subspaces of $V$. Equivalently (for a finite-dimensional $V$), $\Tc(V)$ is described by requiring that its pc dual algebra is $\check{\rm T}(V^*)$, the completion of the free algebra ${\rm T}(V^*)$ by all two-sided ideals of finite codimension.
\end{example}	

We need some standard results about the relationship between bicomodules over universal enveloping coalgebras and one-sided modules. Let $C$ be a dg Hopf algebra with a bijective antipode $S$ and consider the map $m: C\otimes C^{\op}\to C$ so that $a\otimes b\mapsto a S^{-1}(b)$. Then $m$ is a coalgebra map and so, every $C$-bicomodule $M$ (which is by definition a $C\otimes C^{\op}$-comodule) can be viewed as a $C$-comodule by corestriction along $m$. We will denote this $C$-comodule by $M^{\ad}$. The following result holds; it will later be applied when $C$ is an enveloping coalgebra of a Lie coalgebra.

\begin{prop}\label{prop:onetwosided}
	Let $C$ be a dg Hopf algebra and $M$ be a $C\otimes C^{\op}$-comodule. Then there is a natural isomorphism
	\[
	\operatorname{RHom}_{C\otimes C^{\op}\CComod}(M,C)\cong\operatorname{RHom}_{C\CComod}(M^{\ad},\ground).
	\]
\end{prop}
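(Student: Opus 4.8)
The plan is to reduce both sides to a statement about (co)induction along the coalgebra map $m\colon C\otimes C^{\op}\to C$, and to identify the right adjoint of corestriction as a kind of cotensor product. First I would recall that corestriction of comodules along a coalgebra map $f\colon D\to C$ always admits a right adjoint, the coinduction functor $N\mapsto C\mathbin{\square}_D N$ (cotensor product), and that in the dg/coderived setting this adjunction is Quillen; in particular one can compute $\RHom_{C\CComod}(M^{\ad},\ground)$ as $\RHom_{C\otimes C^{\op}\CComod}(M, R)$ where $R$ is the derived coinduction of the trivial $C$-comodule $\ground$ along $m$. So the crux is to prove that this derived coinduction is $C$ itself, viewed as a $C\otimes C^{\op}$-comodule via the left and right regular coactions — equivalently, that $m$ is a ``co-flat'' map making the underived coinduction already derived.

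The key computation, then, is the $C\otimes C^{\op}$-comodule isomorphism $C \mathbin{\square}_{C, m}\ground \cong C$, where on the right $C$ carries the standard bicomodule structure. This is the coalgebra-theoretic dual of the familiar fact that, for a Hopf algebra $H$ with bijective antipode, $H\otimes H$ is free as a module over $H$ acting by $h\cdot(a\otimes b) = ah_{(1)}\otimes bS^{-1}(h_{(2)})$ — the ``translation map'' $a\otimes b\mapsto aS^{-1}(b_{(1)})\otimes b_{(2)}$ furnishing the isomorphism with $H\otimes H \cong C\otimes C$ as a left $H$-module. Dualizing (and using that $C$ is locally finite, being an enveloping coalgebra, so that linear duality behaves well), the analogous ``co-translation'' isomorphism exhibits $C\otimes C$ as cofree over $C$ along $m$, and one checks that under this identification the $C$-cotensor of $C\otimes C$ down to $\ground$ is exactly $C$ with its bicomodule structure. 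Since $C\otimes C$ is a cofree (hence injective, hence fibrant) $C\otimes C^{\op}$-comodule, cofreeness over $m$ implies the underived cotensor product computing coinduction is already the derived one, which is what is needed.

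Assembling: $\RHom_{C\CComod}(M^{\ad},\ground) \cong \RHom_{C\otimes C^{\op}\CComod}(M, \mathbb{R}\,\mathrm{coind}_m\ground) \cong \RHom_{C\otimes C^{\op}\CComod}(M, C)$, with naturality in $M$ inherited from the adjunction. One should double-check that the corestriction-coinduction adjunction is genuinely Quillen for the coderived model structures of \cite[Section 8]{Posi} — corestriction along $m$ preserves cofibrations and weak equivalences since weak equivalences are detected on underlying complexes and coacyclicity is preserved — so that $\RHom$ on the source side transports correctly.

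The main obstacle I expect is the co-translation isomorphism itself: getting the duality bookkeeping right between the Hopf algebra $C$ (a locally finite coalgebra) and its pseudocompact dual algebra $C^*$, and making sure the map $m$ with $a\otimes b\mapsto aS^{-1}(b)$ dualizes to precisely the algebra-level translation, including keeping track of $S$ versus $S^{-1}$ and of which side the regular coaction sits on. A secondary technical point is to verify that $C$, as an enveloping coalgebra $\Uc(\g)$, indeed has bijective antipode in the dg setting, so that Proposition~\ref{prop:onetwosided} applies as stated; this follows from the antipode being bijective already on the pseudocompact completion of the enveloping algebra, as noted in the earlier remarks.
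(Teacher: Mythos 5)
Your proposal is correct and is, in substance, the same argument as the paper's: both reduce the statement to the (co)restriction/(co)induction adjunction along $m$ together with the translation-map isomorphism exhibiting $C\otimes C^{\op}$ as cofree over $C$ (equivalently $A\otimes A^{\op}$ as free over $A=C^*$), which identifies the derived coinduction of $\ground$ with $C$. The only difference is presentational: the paper dualizes the whole problem to pseudocompact $A$-modules at the outset so as to quote the classical Hopf-algebra lemmas of Witherspoon and Brown--Zhang verbatim, whereas you keep the argument on the comodule side and dualize the key lemma instead.
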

\begin{proof}
	Passing to the dual pc algebra $A:=C^*$ and its pc module $N:=M^*$, we can rewrite the desired isomorphism as follows:
	\[
	\operatorname{RHom}_{A\otimes A^{\op}\MMod}(A, N)\cong \operatorname{RHom}_{A\MMod}(k,N^{\ad}).
	\]
	This is well-known in the context of ordinary derived functors for bimodules over Hopf algebras and the proof carries over to the pc context; we will indicate the main steps.
	
	Step 1: there is an isomorphism of $A$-bimodules
	\begin{equation}\label{eq:iso1}
	(A\otimes A^{\op})\otimes_A\ground \to A
	\end{equation}
	where $A$ acts on the right hand side of (\ref{eq:iso1}) by left and right multiplication and it acts on $A\otimes A^{\op}$ by the map $m^*:A\to A\otimes A^{\op}$ dual to $m$. The map  $(A\otimes A^{\op})\otimes_A\ground\to A$ is given by $a\mapsto a\otimes 1\otimes 1$ and the inverse map is given by $a\otimes b\otimes 1\mapsto ab$.  This is verified in \cite[Lemma 9.4.2]{Wi} in the discrete case and carries over verbatim to the pc case.
	
	Step 2: the right $A$-module $A\otimes A^{\op}$ given by $m^*$ is isomorphic to
	 $A_A\otimes A_{\ground}$ where $A_A$ indicates the regular right $A$-module and $A_{\ground}$ -- the vector space $A$ with the trivial (i.e. factoring through the augmentation) $A$-module structure. The isomorphism $A_A\otimes A_{\ground}\to A\otimes A^{\op}$ is given by $a\otimes b\mapsto (1\otimes S^*)(a\otimes b)$. In particular, $A\otimes A^{\op}$ with this $A$-module structure, is (topologically) free (and so cofibrant; this is equivalent to the dual $C$-comodule $C\otimes C^{\op}$ being cofree and therefore fibrant). This  is verified in \cite[Lemma 2.2]{BJ} in the discrete case and carries over verbatim to the pc case.
	
	 	 Step 3. We have the following  isomorphisms:
	 \begin{align*}
	 \operatorname{RHom}_{A\otimes A^{\op}}(A,N)&\cong \operatorname{RHom}_{A\otimes A^{\op}}((A\otimes A^{\op})\otimes_A\ground, N) \quad{\text{ by step 1}}\\
	 &\cong \operatorname{RHom}_{A\otimes A^{\op}}((A\otimes A^{\op})\otimes^{\mathbb{L}}_A\ground,N) \quad{\text{ by step 2}}\\
	 &\cong \operatorname{RHom}_{A}(\ground, N^{\ad}),
	 \end{align*}
which finishes the proof.
\end{proof}
\begin{rem}
The graded vector space $\operatorname{RHom}_{C\otimes C^{\op}\CComod}(M,C)$ is isomorphic to $\operatorname{HH}(C,M)$, the Hochschild cohomology of $C$ with coefficients in $M$, cf. for example \cite{Kel, GHL} regarding the Hochschild cohomology of coalgebras.
\end{rem}
\subsection{Conilpotent Lie coalgebras}
Examples discussed above show that universal enveloping coalgebras of Lie coalgebras exhibit rather different behaviour than ordinary universal enveloping algebras of Lie algebras. There is, however, a class of Lie coalgebras admitting a different definition of a universal enveloping  `coalgebra', which is rather close to the classical picture.
\begin{defi}
	A finite-dimensional  Lie coalgebra $\g$ is called \emph{conilpotent} if its linear dual Lie algebra $\g^*$ is nilpotent (i.e. there exists $N\in \mathbb{N}$ such that any Lie word in $\g^*$ of length $>N$ is zero). A (not necessarily finite-dimensional) Lie coalgebra is conilpotent if it is a union of finite-dimensional conilpotent Lie coalgebras.
	
	A finite-dimensional comodule $M$ over $\g$ is conilpotent if $M^*$ is nilpotent over $\g^*$, i.e. there exists $N\in \mathbb{N}$ such that any word in $\g^*$ of length $>N$ acts as zero in $M^*$. A (not necessarily finite dimensional) $\g$-comodule $M$ is conilpotent if it is a union of finite-dimensional conilpotent $\g$-comodules.
\end{defi}
There is a parallel definition of a conilpotent coassociative coalgebra and a corresponding conilpotent comodule, cf. \cite{Posisurvey} for details. Note that in older sources such as \cite{sweedler} a conilpotent coalgebra is called \emph{irreducible}.

\begin{defi}
Let $C$ be a coaugmented finite-dimensional coalgebra; denote by $\overline{C}$ the cokernel of the coaugmentation $\ground\to C$; it is thus a noncounital coalgebra. Then $C$ is called conilpotent if $\overline{C}^*$ is nilpotent, i.e. there exists $N\in \mathbb{N}$ such that any product of more than $N$ elements in $\overline{C}^*$ vanishes. A not necessarily finite-dimensional coalgebra $C$ is conilpotent if it is a union of finite-dimensional conilpotent coalgebras. 	
	\end{defi}
Given a coalgebra $C$ with a coaugmentation $\epsilon:\ground\to C$, there is a unique maximal conilpotent subcoalgebra $\Ccon\hookrightarrow C$ with a compatible coaugmentation; this is just the union of all conilpotent subcoalgebras of $C$ containing the image of $\epsilon$. From the point of view of dual pc algebras, $(\Ccon)^*$ is the completion of $C^*$ at the ideal of the augmentation (dual to the coaugmentation of $C$). For a Lie coalgebra $\g,$ the union of its conilpotent Lie subcoalgebras is itself a conilpotent Lie subcoalgebra $\gcon$ and its dual $\gcon^*$ is the completion of $\g^*$ with respect to the wordlength of Lie monomials.

The following result is obtained using standard techniques, see e.g. \cite[Corollary 8.0.9]{sweedler} in the associative case.
\begin{prop}
	The functor $C\mapsto \Ccon$ from the category of coaugmented coalgebras to conilpotent coalgebras is right adjoint to the natural inclusion functor. The functor $\g\mapsto \gcon$ from Lie coalgebras to conilpotent Lie coalgebras is likewise right adjoint to the inclusion functor.
\end{prop}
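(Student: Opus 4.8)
First I would reduce the proposition to a factorization property: for every conilpotent coalgebra $D$ and every coaugmented coalgebra $C$, each morphism of coaugmented coalgebras $f\colon D\to C$ should factor uniquely through the canonical inclusion $\Ccon\hookrightarrow C$. Granting this, the assignment $f\mapsto\bar f$, where $\bar f\colon D\to\Ccon$ is the corestriction of $f$, is a bijection from $\Hom(D,C)$ (morphisms of coaugmented coalgebras) onto $\Hom(D,\Ccon)$ (morphisms of conilpotent coalgebras), with inverse $g\mapsto(\Ccon\hookrightarrow C)\circ g$; here one uses that $\Ccon$ is itself conilpotent and that a morphism of conilpotent coalgebras is nothing but a morphism of coaugmented coalgebras between conilpotent ones. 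Naturality in $D$ and $C$ follows from functoriality of corestriction, the counit at $C$ is the inclusion $\Ccon\hookrightarrow C$, the unit at $D$ is the identity $D=D_{con}$ (as $D$ is already conilpotent), and the triangle identities hold trivially; this is precisely the assertion that $(-)_{con}$ is right adjoint to the inclusion.

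To establish the factorization property it is enough to show that the image $f(D)\subseteq C$ is a conilpotent subcoalgebra containing the image of the coaugmentation of $C$: then $f(D)\subseteq\Ccon$ by the maximality of $\Ccon$ recalled just above the statement, so $f$ factors through $\Ccon$, and uniqueness holds because $\Ccon\hookrightarrow C$ is a monomorphism. That $f(D)$ is a dg subcoalgebra is routine, since $f$ is a chain map and $\Delta_C\bigl(f(x)\bigr)=(f\otimes f)\Delta_D(x)\in f(D)\otimes f(D)$; it contains $f\bigl(\epsilon_D(\ground)\bigr)=\epsilon_C(\ground)$ because $f$ intertwines the coaugmentations. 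For conilpotency, write $D=\bigcup_\alpha D_\alpha$ as a directed union of finite-dimensional conilpotent subcoalgebras (one may take all finite-dimensional subcoalgebras of $D$: they cover $D$ by the fundamental theorem of coalgebras, and each of them is conilpotent because $D$ is). Then $f(D)=\bigcup_\alpha f(D_\alpha)$ is a directed union of finite-dimensional subcoalgebras of $C$, and each $f(D_\alpha)$ is conilpotent: the surjection $\overline{D_\alpha}\twoheadrightarrow\overline{f(D_\alpha)}$ of non-counital coalgebras dualizes to an embedding of (non-unital) algebras $\overline{f(D_\alpha)}^*\hookrightarrow\overline{D_\alpha}^*$, and a subalgebra of a nilpotent algebra is nilpotent. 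Hence $f(D)$ is conilpotent.

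The Lie-coalgebra statement is proved in the same manner. The image of a morphism of Lie coalgebras $f\colon\g\to\h$ is a dg Lie subcoalgebra of $\h$ by the analogous computation with the cobracket; a finite-dimensional quotient of a conilpotent Lie coalgebra is conilpotent because its dual Lie algebra embeds, as a Lie subalgebra, into the nilpotent dual of the source; and $\gcon$ is by construction the maximal conilpotent Lie subcoalgebra of $\g$. Therefore every morphism from a conilpotent Lie coalgebra into $\g$ factors uniquely through $\gcon$, which is exactly the required right adjunction. Alternatively, passing to continuous duals, both statements are dual to the fact that adic completion at the augmentation ideal is left adjoint to the inclusion of complete objects into augmented pseudocompact algebras; but the direct argument is shorter.

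I do not anticipate a serious obstacle -- the result is formal. The only points that need genuine, if routine, attention are: (i) that conilpotency of a finite-dimensional (Lie) coalgebra passes to quotient (Lie) coalgebras, equivalently that nilpotence of a (Lie) algebra is inherited by subalgebras; and (ii) the mild bookkeeping in passing from the finite-dimensional pieces to the whole coalgebra, where the fundamental theorem of coalgebras and the directedness of the system of finite-dimensional subcoalgebras do the work.
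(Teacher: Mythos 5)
Your argument is correct and is precisely the ``standard technique'' the paper invokes without writing out (it only cites \cite[Corollary 8.0.9]{sweedler} for the associative case): one checks that the image of a conilpotent (Lie) coalgebra is a conilpotent (Lie) subcoalgebra --- via the duality between surjections of finite-dimensional coalgebras and injections of algebras, plus the fact that nilpotence passes to subalgebras --- and then factors uniquely through the maximal conilpotent subobject. The only cosmetic caveat is that the finite-dimensional pieces $D_\alpha$ should be taken to contain the image of the coaugmentation so that $\overline{f(D_\alpha)}$ and the coaugmentation of $f(D)$ make sense, which is how the paper's definition is set up anyway.
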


\begin{rem}
	One can similarly define a conilpotent finite-dimensional comodule $M$ over a conilpotent coalgebra $C$ by the condition that any product of sufficiently many elements of $C^*$ acts as zero on $M^*$ and, furthermore, extend this definition to arbitrary comodules. One quickly observes however, that \emph{any} comodule over a conilpotent coalgebra is itself conilpotent making such a definition extraneous. In contrast, not all comodules over a conilpotent Lie coalgebra are conilpotent. Indeed, let $\g$ be the one-dimensional Lie coalgebra concentrated in degree zero and the zero bracket. A $\g$-comodule is just a vector space $V$ together with an endomorphism $f$ of $V$, which can be arbitrary. If $\dim V<\infty$ then the corresponding $\g$-comodule is conilpotent if and only if the endomorphism $f$ is nilpotent. This phenomenon is related to the fact that the universal enveloping coalgebra of a conilpotent Lie coalgebra need not itself to be conilpotent.
\end{rem}
A closely related notion to conilpotency (both in the associative and Lie algebras setting) is that of \emph{pronilpotency}.
\begin{defi}\label{def:pron}
	An augmented algebra $A$ with the augmentation ideal $I$ is pronilpotent  if it is complete with respect to the $I$-adic filtration:
	\[A\cong \varprojlim_n A/I^n.\]
	Similarly, a Lie algebra $\g$ is pronilpotent if it is a projective limit of its nilpotent quotients; more precisely, denoting by $\g_n$ the Lie ideal in $\g$ generated by Lie bracket of length $n$, we have:
	\[
	\g\cong \varprojlim_n \g/\g_n.
	\]
\end{defi}
\begin{prop}
	Let $C$ be a conilpotent coaugmented coalgebra. Then $C^*$ is pronilpotent. Similarly, if $\g$ is a conilpotent Lie coalgebra, then $\g^*$ is pronilpotent.
\end{prop}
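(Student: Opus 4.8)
The plan is to reduce to the finite-dimensional case by presenting $C$ as a filtered colimit of its finite-dimensional conilpotent subcoalgebras and then to push the (trivial) finite-dimensional statement through the resulting cofiltered limit of pseudocompact algebras; the whole argument is then formal manipulation of (co)limits. The Lie statement is proved in exactly the same way, with the lower central series of $\g^*$ in place of the powers of the augmentation ideal, so I will only describe the coassociative case in detail.

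First I would fix a presentation $C=\colim_\alpha C_\alpha$, where $\{C_\alpha\}$ runs over the finite-dimensional conilpotent subcoalgebras of $C$ containing the image of the coaugmentation; this is a filtered system. Dualizing, $C^*\cong\varprojlim_\alpha C_\alpha^*$ is a cofiltered limit of finite-dimensional augmented algebras whose transition maps $C_\beta^*\to C_\alpha^*$, being dual to the inclusions $C_\alpha\hookrightarrow C_\beta$, are surjective. Let $I$ and $I_\alpha=\overline{C_\alpha}^*$ denote the augmentation ideals of $C^*$ and $C_\alpha^*$; by the definition of conilpotency each $I_\alpha$ is nilpotent, say $I_\alpha^{N_\alpha}=0$. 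Throughout, $I^n$ is to be read as the \emph{closed} ideal, namely the closure of the algebraic $n$-th power, which is the natural reading of Definition \ref{def:pron} in the pseudocompact world.

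The key step would then be to identify the $I$-adic completion of $C^*$ with $C^*$ itself. Because the transition maps are surjective, the tower $(I_\alpha^n)_\alpha$ is Mittag--Leffler; since cofiltered limits are exact in the category of pseudocompact vector spaces (that category being anti-equivalent to the category of all vector spaces, in which filtered colimits are exact), applying $\varprojlim_\alpha$ to the short exact sequences $0\to I_\alpha^n\to C_\alpha^*\to C_\alpha^*/I_\alpha^n\to 0$ would yield $\overline{I^n}\cong\varprojlim_\alpha I_\alpha^n$ and $C^*/\overline{I^n}\cong\varprojlim_\alpha C_\alpha^*/I_\alpha^n$ for every $n$. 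Hence
\[
\varprojlim_n C^*/\overline{I^n}\;\cong\;\varprojlim_n\varprojlim_\alpha C_\alpha^*/I_\alpha^n\;\cong\;\varprojlim_\alpha C_\alpha^*\;\cong\;C^*,
\]
where the middle isomorphism uses that $C_\alpha^*/I_\alpha^n=C_\alpha^*$ once $n\geq N_\alpha$, together with the cofinality of the subposet $\{(n,\alpha):n\geq N_\alpha\}$, so that the double limit only sees the index $\alpha$. This would exhibit $C^*$ as pronilpotent, and the identical computation with the $\g_\alpha^*$ (nilpotent, so the $n$-th lower central term vanishes for $n\gg 0$) would settle the Lie case.

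I expect the only real obstacle to be the claim $C^*/\overline{I^n}\cong\varprojlim_\alpha C_\alpha^*/I_\alpha^n$, i.e. that forming closed $n$-th powers commutes with the cofiltered limit. One should resist the temptation to argue that the $I$-adic and the pseudocompact topologies on $C^*$ agree --- in general the $I$-adic topology is strictly coarser, since the ``layers'' $\ker(C_\beta^*\to C_\alpha^*)$ need not sit deep in the augmentation filtration --- and instead lean on exactness of inverse limits along surjective systems of pseudocompact spaces. As a sanity check, or as an alternative route, one may invoke the description recorded before the proposition of $(\Ccon)^*$ as the completion of $C^*$ at the augmentation ideal, together with the observation that a conilpotent coalgebra satisfies $\Ccon=C$.
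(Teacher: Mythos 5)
Your argument is correct and is essentially the paper's own proof: both write $C^*$ as the cofiltered limit of its finite-dimensional quotients (the duals of the finite-dimensional conilpotent subcoalgebras of $C$), use that each such quotient has nilpotent augmentation ideal, and conclude by a cofinality/limit-interchange argument identifying the pseudocompact topology with the $I$-adic one. Your explicit handling of the closed powers $\overline{I^n}$ and of exactness of cofiltered limits of pseudocompact spaces merely spells out the cofinality step that the paper asserts in one line.
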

\begin{proof}
	Consider the collection of two-sided ideals of finite codimension of $C^*$. They form a category with morphisms being inclusions of ideals. We have $C^*=\varprojlim_\alpha C^*/I_{\alpha}$ since $C^*$ is a pc algebra. Note that by our assumption on $C$, all algebras $C^*/I_{\alpha}$ have nilpotent augmentation ideals. Now consider the category formed by the powers $I^n$ of the augmentation ideal $I$ in $C^*$ and one map between any two objects corresponding to the natural inclusion. This is cofinal inside the category of all ideals of finite codimension and so we have
	\begin{align*}
	C^*&\cong \varprojlim_\alpha C^*/I_{\alpha}    \cong \varprojlim_n\varinjlim_{I_{\alpha_n}\subset I^n} C^*/I_{\alpha_n} \cong \varprojlim_nC^*/I^n.
	\end{align*}
So, $C^*$ is pronilpotent as claimed. The proof for a conilpotent Lie coalgebra is similar.	
\end{proof}	
\begin{rem}
	Of course, not every pronilpotent algebra or Lie algebra is pc. For example, a countably-dimensional abelian Lie algebra is pronilpotent according to Definition \ref{def:pron} but it is not pc. Such pronilpotent (Lie) algebras cannot be duals to (Lie) coalgebras.
\end{rem}
\begin{example}
	The cofree conilpotent coalgebra on a vector space $V$ is the tensor coalgebra $\Tcon(V) = \oplus_{n\geq 0 } V^{\otimes n}$. The diagonal is given by the formula
	\begin{eqnarray*}
 && \Delta(v_1\otimes \ldots \otimes v_n)\\
  &=&1\otimes (v_1\otimes\ldots \otimes v_n)+\sum_{k=1}^{n-1} (v_1\otimes \ldots\otimes v_k)\otimes(v_{k+1}\otimes\ldots\otimes v_n)+(v_1\otimes\ldots \otimes v_n)\otimes 1.
\end{eqnarray*}
	where $v_i\in V, i=1,2,\ldots$. It is easy to see that $\Tcon(V)\cong [\Tc(V)]_{\operatorname{con}}$. In the case $\dim V<\infty$, the dual pc algebra to $\Tcon(V)$ is $\hat{T}(V^*)$, the completion of $T(V^*)$ at the maximal ideal generated by $V^*$. As the name indicates, the functor $V\mapsto \Tcon(V)$ is right adjoint to the forgetful functor from conilpotent coalgebras to vector spaces.
	
	The coalgebra $\Tcon(V)$ is in fact a Hopf algebra. Its dual Hopf algebra (when $V$ is finite-dimensional) is $\hat{T}(V^*)$  with the diagonal  determined by the requirement that the elements in $V^*$ be primitive.
	
	Similarly, the cofree conilpotent Lie coalgebra $\Lcon(V)$ on a vector space $V$ is given by $[\Lc(V)]_{\operatorname{con}}$, the maximal conilpotent Lie coalgebra in the cofree Lie coalgebra on $V$. It is well-known that (under the condition $\dim V<\infty$) the pc Lie algebra $[\Lcon(V)]^*$ can be identified with the Lie subalgebra of primitive elements in the Hopf algebra $\hat{T}(V^*)$ described above.
\end{example}
\begin{defi}
	Let $\g$ be a Lie coalgebra. Then its conilpotent universal enveloping coalgebra $\Ucon(\g)$ is defined as $\Ucon(\g):=[\Uc(\g)]_{\operatorname{con}}$.
\end{defi}
\begin{rem}\label{rem:conil}
	By definition, $\Ucon(\g)$ is conilpotent. If $\g$ is finite-dimensional, then $[\Ucon(\g)]^*\cong\hat{ \U}(\g^*)$, the universal enveloping algebra of $\g^*$ completed at the augmentation ideal. The coalgebra $\Ucon(\g)$ can be trivial, i.e. isomorphic to $\ground$; such is the case when $\g$ is semisimple. In contrast, if $\g$ is conilpotent, then $\g^*$ embeds into $[\U_{\operatorname{con}}(\g)]^*$ this is well-known if $\g$ is finite-dimensional (and so $\g^*$ is nilpotent) and is easy to prove in general.
\end{rem}
Given a $\Ucon(\g)$-comodule $M$ (necessarily conilpotent), the corestriction determines a structure of a conilpotent $\Uc(\g)$-comodule on $M$ and thus, that of a conilpotent $\g$-comodule on $M$. Conversely, by universal properties of $\Uc(\g)$ and $\Ucon(\g)$, every conilpotent $\g$-comodule corresponds to a $\Ucon(\g)$-comodule. We obtain the following result.
\begin{prop}\label{prop:conilpotentmodules}
	The categories of conilpotent $\g$-comodules and $\Ucon(\g)$-comodules are equivalent.
\end{prop}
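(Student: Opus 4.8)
The plan is to promote the remark preceding the statement to an honest equivalence, by first building a dictionary on finite-dimensional objects and then globalizing via filtered colimits. So the first step is the finite-dimensional case. Fix a finite-dimensional Lie coalgebra $\g$ and a finite-dimensional (dg) vector space $M$. As already used above, a $\g$-comodule structure on $M$ is the same datum as a continuous homomorphism of pc Lie algebras $\rho:\g^*\to\End(M^*)\cong\End(M)$, which by the universal property of the universal enveloping algebra amounts to an algebra homomorphism $\tilde\rho:\U(\g^*)\to\End(M)$. Write $I\subset\U(\g^*)$ for the augmentation ideal; then $I^n$ is spanned by products of at least $n$ elements of $\g^*$, and $\hat{\U}(\g^*)=\varprojlim_n\U(\g^*)/I^n$. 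By definition, $M$ is a conilpotent $\g$-comodule precisely when $\tilde\rho(I^{N+1})=0$ for some $N$, i.e. when $\tilde\rho$ factors through $\U(\g^*)/I^{N+1}$, hence continuously through $\hat{\U}(\g^*)\cong[\Ucon(\g)]^*$ (Remark \ref{rem:conil}); dualizing, such a factorization is exactly a $\Ucon(\g)$-comodule structure on $M$. Conversely, given a finite-dimensional $\Ucon(\g)$-comodule $M$, its structure map $[\Ucon(\g)]^*\to\End(M)$ is continuous with finite-dimensional target, so it factors through a finite-dimensional quotient algebra of the pronilpotent algebra $[\Ucon(\g)]^*$, whose augmentation ideal is nilpotent; since $\g^*$ maps into that augmentation ideal, the induced $\g$-comodule $M$ is conilpotent. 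These constructions are evidently mutually inverse, natural in $M$, and compatible with differentials, so they identify the categories of finite-dimensional conilpotent $\g$-comodules and finite-dimensional $\Ucon(\g)$-comodules.

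The second step is to globalize. A conilpotent $\g$-comodule is by definition the filtered union of its finite-dimensional conilpotent subcomodules; dually, every $\Ucon(\g)$-comodule is the filtered union of its finite-dimensional subcomodules (all comodules over a coalgebra being locally finite), each of which is automatically conilpotent because $\Ucon(\g)$ is a conilpotent coalgebra. Hence both categories are the completions of their full subcategories of finite-dimensional objects under filtered colimits, and the isomorphism of the first step extends to an equivalence between them. Equivalently, one can exhibit the functors directly: corestriction along the inclusion $\Ucon(\g)\hookrightarrow\Uc(\g)$, composed with the identification of $\Uc(\g)$-comodules with locally finite $\g$-comodules established above, sends a $\Ucon(\g)$-comodule to a conilpotent $\g$-comodule, while in the other direction a conilpotent $\g$-comodule is in particular locally finite, hence a $\Uc(\g)$-comodule, and the finite-dimensional analysis shows that its coaction $M\to\Uc(\g)\otimes M$ corestricts through $\Ucon(\g)\otimes M$.

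The only genuinely non-formal point is the one flagged in the introduction: since comodules over a conilpotent Lie coalgebra need not be conilpotent, one has to check that corestriction along $\Ucon(\g)\hookrightarrow\Uc(\g)$ really lands among conilpotent $\g$-comodules and, conversely, that the coaction assembled from finite-dimensional conilpotent subcomodules is continuous for $[\Ucon(\g)]^*$ rather than only for $[\Uc(\g)]^*$. Both rest on the pronilpotency of $[\Ucon(\g)]^*$ together with the fact that $\g^*$ lands in its augmentation ideal; once these are in hand, the remainder is routine bookkeeping with filtered colimits and linear duality.
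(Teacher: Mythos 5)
Your argument is correct and follows essentially the same route as the paper, whose proof is the short paragraph preceding the statement: corestriction along $\Ucon(\g)\hookrightarrow\Uc(\g)$ in one direction and the universal properties of $\Uc(\g)$ and $\Ucon(\g)$ in the other. You merely unpack those universal properties into the explicit finite-dimensional dual-algebra analysis (pronilpotency of $[\Ucon(\g)]^*$, factorization through $\U(\g^*)/I^{N+1}$) and then globalize by filtered colimits, which is more detail than the paper gives but the same idea.
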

So, to a vector space $V$, we associated two Lie coalgebras: $\Lclf(V)$, the cofree locally finite  Lie coalgebra and $\Lcon(V)$, the cofree conilpotent Lie coalgebra. They have universal enveloping coalgebras $\Uc(\Lclf(V))\cong \Tc(V)$ and $\Ucon(\Lcon(V))\cong \Tcon(V)$, which are the cofree coassociative coalgebra and cofree conilpotent coassociative coalgebra respectively. We will need another coassociative coalgebra canonically associated to $V$.
\begin{defi}
The \emph{restricted cofree coalgebra}, on a vector space $V$ is defined as  $$\Tcres(V):=\Uc(\Lcon(V)),$$ the universal enveloping coalgebra of $\Lcon(V)$.
\end{defi}
Note that if $\dim V=1$, then $\Tcres(V)\cong \Tc(V)$; however if $V$ is concentrated in degree zero and has dimension at least 2,
then $\Tcres(V)$ is a proper subcoalgebra of $\Tc(V)$. We have, therefore, the following inclusions of coalgebras
$\Tc(V)\supseteq \Tcres(V)\supseteq \Tcon(V)$.

Just as $\Tcon(V)$  and $\Tc(V)$, the coalgebra $\Tcres(V)$ satisfies an appropriate universal property. Let us call a coassociative coalgebra $C$ \emph{Lie-conilpotent} if its associated Lie coalgebra $L(C)$ is conilpotent. Clearly, any conilpotent coassociative coalgebra is Lie-conilpotent, however there exist non-conilpotent coassociative coalgebras that are Lie-conilpotent, with $\Tcres(V)$ being the universal example. The following result follows from unwrapping the definitions.
\begin{prop}\label{prop:Lieconilpotent} The functor $V\mapsto \Tcres V$ from vector spaces to Lie-conilpotent coassociative coalgebras is right adjoint to the forgetful functor.
\end{prop}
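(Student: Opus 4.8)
The statement unwinds to a natural bijection
\[
\Hom_{\text{coalg}}(C,\Tcres V)\;\cong\;\Hom_{\ground}(C,V),
\]
natural in a Lie-conilpotent coassociative coalgebra $C$ and a vector space $V$; here the Lie-conilpotent coalgebras form a \emph{full} subcategory of all coassociative coalgebras (so the left-hand side may be computed there), and the forgetful functor sends a coalgebra to its underlying vector space. Since the cocommutator Lie coalgebra $L(C)$ has the same underlying space as $C$, and $C$ is Lie-conilpotent exactly when $L(C)$ is a conilpotent Lie coalgebra, this forgetful functor factors as $C\mapsto L(C)\mapsto(\text{underlying space of }L(C))$. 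The plan is to exhibit $V\mapsto\Tcres V=\Uc(\Lcon V)$ as a composite of right adjoints along this factorisation, and separately to check that $\Tcres V$ really is Lie-conilpotent.

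The bijection is obtained by chaining adjunctions already established. First, the adjunction $L\dashv\Uc$ gives $\Hom_{\text{coalg}}(C,\Uc(\Lcon V))\cong\Hom_{\text{Lie coalg}}(L(C),\Lcon V)$ for every coassociative coalgebra $C$. Second, since $L(C)$ is conilpotent, $\Lcon V=[\Lc V]_{\operatorname{con}}$, and the reflection $\g\mapsto\gcon$ is right adjoint to the inclusion of conilpotent Lie coalgebras, the conilpotency reflection may be dropped: $\Hom_{\text{Lie coalg}}(L(C),\Lcon V)\cong\Hom_{\text{Lie coalg}}(L(C),\Lc V)$. Third, the cofree Lie coalgebra adjunction of Example \ref{ex:cofree} gives $\Hom_{\text{Lie coalg}}(L(C),\Lc V)\cong\Hom_{\ground}(L(C),V)=\Hom_{\ground}(C,V)$. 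Tracking the isomorphisms shows the composite is natural in $C$ and $V$; this is the purely formal ``unwrapping of definitions'' referred to in the statement.

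The point with genuine content is that $\Tcres V=\Uc(\Lcon V)$ lies in the target category, i.e.\ $L(\Uc(\Lcon V))$ is a conilpotent Lie coalgebra. Since $L$ is a left adjoint it preserves filtered colimits; $\Lcon V$ is a filtered union of finite-dimensional conilpotent Lie subcoalgebras $\g_\alpha$, while each $\Uc(\g_\alpha)$ is in turn the filtered union of the finite-dimensional subcoalgebras dual to the finite-dimensional quotient algebras of $\U(\g_\alpha^*)$; hence $L(\Uc(\Lcon V))$ is a filtered union of Lie coalgebras of the form $(LA)^*$, where $A$ runs over finite-dimensional quotient algebras of enveloping algebras of finite-dimensional nilpotent Lie algebras. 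As $(LA)^*$ is conilpotent exactly when the commutator Lie algebra $LA$ is nilpotent, and a union of finite-dimensional conilpotent Lie subcoalgebras is conilpotent by definition, everything reduces to the following claim, which I expect to be the main obstacle: \emph{if $\mathfrak n$ is a finite-dimensional nilpotent Lie algebra and $A$ is a finite-dimensional quotient algebra of $\U(\mathfrak n)$, then $LA$ is a nilpotent Lie algebra.}

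I would prove this claim by induction on the nilpotency class of $\mathfrak n$. Because $A$ is generated as an algebra by the (nilpotent) image of $\mathfrak n$, Lie's theorem forces every simple module of $A\otimes\overline{\ground}$ to be one-dimensional, so $A/J(A)$ is a product of finite field extensions of $\ground$ — separable, as we are in characteristic zero — hence commutative, and in particular $[A,A]\subseteq J(A)$. Consequently the image $\bar{\mathfrak w}$ of the last nonzero term $\mathfrak w=\gamma_c(\mathfrak n)$ of the lower central series of $\mathfrak n$ is central in $A$ and lies in $J(A)$, so $I:=A\bar{\mathfrak w}$ is a nilpotent two-sided ideal and $A/I$ is a quotient of $\U(\mathfrak n/\mathfrak w)$, which has strictly smaller class. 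By induction $L(A/I)$ is nilpotent, so $\gamma_{d+1}(A)\subseteq I=A\bar{\mathfrak w}$ for some $d$; iterating the identity $[A,A\bar{\mathfrak w}]=[A,A]\bar{\mathfrak w}$, valid because $\bar{\mathfrak w}$ is central, yields $\gamma_{pd+1}(A)\subseteq A\bar{\mathfrak w}^{\,p}$, which vanishes for large $p$ since $\bar{\mathfrak w}\subseteq J(A)$ is nilpotent. With this claim in hand the reduction above is routine, so $\Tcres V$ is Lie-conilpotent and the universal property follows.
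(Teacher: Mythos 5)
Your first paragraph is exactly what the paper means by ``unwrapping the definitions'' (the paper records no further argument): the chain
$\Hom_{\operatorname{coalg}}(C,\Uc(\Lcon V))\cong\Hom_{\operatorname{Lie}}(L(C),\Lcon V)\cong\Hom_{\operatorname{Lie}}(L(C),\Lc V)\cong\Hom_{\ground}(C,V)$,
using $L\dashv\Uc$, the coreflection $\g\mapsto\gcon$ applied against the conilpotent $L(C)$, and the cofreeness of $\Lc V$, is the whole formal content. Where you go beyond the paper is in isolating, and actually proving, the one point of substance: that $\Tcres V=\Uc(\Lcon V)$ is itself Lie-conilpotent, without which the functor does not land in the stated category. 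Your reduction to the claim that $LA$ is nilpotent for every finite-dimensional quotient algebra $A$ of $\U(\mathfrak n)$ with $\mathfrak n$ finite-dimensional nilpotent is the right one, and the inductive argument (Lie's theorem forces $[A,A]\subseteq J(A)$; the last term of the lower central series maps to a central element of $J(A)$; pass to the quotient and iterate $[A,AW]=[A,A]W$ for $W$ central) is correct. It is also genuinely needed: merely triangularizing the action of $\mathfrak n$ on $A$ only shows that $LA$ is solvable, and solvability of $L(C)^*$ would not suffice, so this is not something one gets for free from the definitions.

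Two small repairs. First, the inductive step as written requires $c\ge 2$: when $\mathfrak n$ is abelian, $\mathfrak w=\gamma_1(\mathfrak n)=\mathfrak n$ need not map into $J(A)$ (take $A=\ground[x]/(x^2-x)$, where the image of $x$ is a nontrivial idempotent), so you must state the base case separately --- there $\U(\mathfrak n)$ and hence $A$ is commutative and $LA$ is abelian, so the claim is trivial; for $c\ge2$ one indeed has $\mathfrak w\subseteq[\mathfrak n,\mathfrak n]$, whence $\bar{\mathfrak w}\subseteq[A,A]\subseteq J(A)$ as you use. Second, the parenthetical ``(nilpotent) image of $\mathfrak n$'' should be phrased as ``image of the nilpotent Lie algebra $\mathfrak n$'': the elements of that image are in general not nilpotent elements of $A$ (same example), and your appeal to Lie's theorem only uses solvability of the Lie algebra acting on a finite-dimensional simple module, which is all you need.
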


Given a pc vector space $U$, denote by $\Tres(U)$ the pc algebra dual to $\Tcres(U^*)$. It has an obvious universal property dual to that of $\Tcres(U^*)$ with respect to maps to \emph{Lie-nilpotent} pc algebras (i.e. those whose associated pc Lie algebras are pronilpotent). For later use, we will establish some homological properties of $\Tres(U)$ similar to those of $\check{T}(U)$, cf. \cite[Proposition 2.4]{GL}.
\begin{lem}\label{lem:Lieconil}
	For any pc vector space $U$, there is a $\Tres(U)$-bimodule resolution of the form
	\begin{equation}\label{eq:shortresolution}
	\Tres(U)\otimes U\otimes\Tres(U)\stackrel{d}{\longrightarrow} \Tres(U)\otimes\Tres(U)\stackrel{m}{\longrightarrow}\Tres(U).
	\end{equation}
	where $m$ is the multiplication map and $d(1\otimes u\otimes 1 )=u\otimes 1-1\otimes u$ for $u\in U$.
\end{lem}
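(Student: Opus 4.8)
The statement is the completed, Lie‑nilpotent analogue of the classical fact that a free associative algebra has Hochschild dimension at most $1$, its two‑sided bar complex contracting onto the truncation displayed in \eqref{eq:shortresolution}; \cite[Proposition 2.4]{GL} records precisely this for the full pc completion $\check T(U)$, and the plan is to run the same argument with $\Tres(U)$ in place of $\check T(U)$. The universal property used there — that $\check T(U)$ is cofree — is replaced by the one recorded after Proposition \ref{prop:Lieconilpotent}: $\Tres(U)$ is the universal Lie‑nilpotent pc algebra equipped with a continuous map from $U$. Since the two left‑hand terms of \eqref{eq:shortresolution} are manifestly free $\Tres(U)$‑bimodules, the whole content is to show that the sequence, with zeros adjoined at both ends, is exact in $\DGVectpc$. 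Surjectivity of $m$ is clear from unitality and $m\circ d=0$ follows at once from $d(1\otimes u\otimes 1)=u\otimes 1-1\otimes u$ and bimodule linearity, so what remains is to prove that $d$ is injective with image exactly $\ker m$.

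Equivalently, one identifies $\ker m$ with the bimodule $\Omega^1_{\Tres(U)}$ of continuous noncommutative K\"ahler differentials (the universal derivation being $a\mapsto a\otimes 1-1\otimes a$) and shows that $\Omega^1_{\Tres(U)}$ is the free topological bimodule on $U$. For this I would argue as in the classical case: a continuous derivation $\Tres(U)\to M$ into a $\Tres(U)$‑bimodule $M$ is the same datum as a continuous algebra section of the square‑zero extension $\Tres(U)\oplus M\twoheadrightarrow\Tres(U)$, and if $\Tres(U)\oplus M$ is again Lie‑nilpotent the universal property identifies these sections with continuous linear maps $U\to M$. This yields a natural isomorphism $\operatorname{Der}_{\mathrm{cont}}(\Tres(U),M)\cong\Hom_{\mathrm{cont}}(U,M)$ for all such $M$; since the bimodule map $d\colon\Tres(U)\otimes U\otimes\Tres(U)\to\ker m$ induces this isomorphism, a Yoneda argument (carried out inside the category of bimodules $M$ for which $\Tres(U)\oplus M$ is Lie‑nilpotent, which contains both $\ker m$ and $\Tres(U)\otimes U\otimes\Tres(U)$) shows that $d$ is an isomorphism onto $\ker m$, which is exactly the exactness of \eqref{eq:shortresolution}. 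Alternatively one can bypass this packaging and write down a contracting homotopy of the truncated bar complex by hand, as in \cite[Proposition 2.4]{GL}, checking that it is continuous for the linearly compact topologies.

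The step that genuinely uses the nature of $\Tres(U)$, and which I expect to be the main obstacle, is the hypothesis invoked above: that $\Tres(U)\oplus M$ is Lie‑nilpotent for the bimodules $M$ in play, namely $\Tres(U)$, $\Tres(U)\otimes\Tres(U)$ and $\Tres(U)\otimes U\otimes\Tres(U)$ — all of which are free topological bimodules. Passing to dual pc Lie algebras, $(\Tres(U)\oplus M)^*$ is an extension of the pronilpotent Lie algebra $\Tres(U)^*$ by the abelian ideal $M^*$, and one must check it is still pronilpotent. This should follow because such a free topological bimodule $M$ inherits, from the lower central series filtration of $\Tres(U)^*$ acting on its tensor factors, a complete decreasing filtration whose associated graded is nilpotent in each degree, so that $(\Tres(U)\oplus M)^*$ is a projective limit of nilpotent Lie algebras. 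This is the one place where the argument departs from the $\check T(U)$ case and it is precisely where the Lie‑nilpotency built into $\Tres(U)$ — rather than mere conilpotency — is used.

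Finally one should be careful about the passage to completed tensor products. Writing $\Tres(U)$, $U$ and their tensor powers as inverse limits of finite‑dimensional pieces with surjective transition maps, the tensor products appearing in \eqref{eq:shortresolution} are the expected inverse limits, and a Mittag--Leffler argument shows that exactness is preserved on passing to these limits, so the sequence is exact in $\DGVectpc$. When $U$ carries a nonzero differential, $\Tres(U)$ becomes a dg algebra, the three terms become dg bimodules and $d$, $m$ become dg maps, so \eqref{eq:shortresolution} is a bimodule resolution of $\Tres(U)$ in the dg sense, as claimed.
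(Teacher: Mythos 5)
Your overall strategy coincides with the paper's: identify $\ker m$ with the bimodule $\Omega_{\Tres(U)}$ of continuous noncommutative differentials and then invoke the universal property of $\Tres(U)$ to conclude that this bimodule is the free topological bimodule on $U$, the explicit formula for $d$ being read off by comparison with the classical two-term resolution of the discrete tensor algebra. You have also correctly located the crux, namely the passage from the universal property (which concerns algebra maps into Lie-nilpotent pc algebras) to the statement $\operatorname{Der}_{\mathrm{cont}}(\Tres(U),M)\cong\Hom_{\mathrm{cont}}(U,M)$.

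However, the justification you give for that crux fails: the square-zero extension $\Tres(U)\oplus M$ is \emph{not} Lie-nilpotent when $M$ is a free topological bimodule and $U^0\neq 0$. Indeed, for $u\in U^0$ one computes $\ad_u^n(1\otimes v\otimes 1)=\sum_k\binom{n}{k}(-1)^k\, u^{n-k}\otimes v\otimes u^k$ in $M=\Tres(U)\otimes V\otimes\Tres(U)$, and applying $\mathrm{ev}_\alpha\otimes\id\otimes\mathrm{ev}_\beta$ for two distinct characters of $\Tres(U)$ (which exist in abundance, since any linear functional $U\to\ground$ extends to a character by the universal property) sends this to $(\alpha-\beta)^n v\neq 0$; equivalently, $\Tres(U)\oplus M$ admits the non-nilpotent finite-dimensional quotient $\bigl(\begin{smallmatrix}\ground & V\\ 0 & \ground\end{smallmatrix}\bigr)$. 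So the filtration you propose does not have nilpotent layers, and the universal property of $\Tres(U)$ cannot be applied to $\Tres(U)\oplus M$. The needed isomorphism $\operatorname{Der}_{\mathrm{cont}}(\Tres(U),M)\cong\Hom_{\mathrm{cont}}(U,M)$ must instead be obtained the other way around: the discrete tensor algebra $T(U)$ embeds densely in $\Tres(U)$, a derivation is freely prescribed on $U$ by freeness of $T(U)$, and one verifies directly (a divided-difference/continuity estimate as in the proof of \cite[Proposition 2.4]{GL} for $\check{T}(U)$) that the resulting derivation extends continuously to the completion. That is precisely the ``alternative'' you mention in one sentence at the end of your second paragraph — and it is the route that works, whereas the route you develop in detail does not.
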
	
\begin{proof}
	Note that for any (pc or not) algebra $A$, the kernel of the multiplication map $A\otimes A\to A$ is $\Omega_A$, the $A$-bimodule of noncommutative differentials for $A$, and it has the property that any derivation of $A$ with values in an $A$-bimodule $M$ is in a 1-1 correspondence with $A$-bimodule maps $\Omega(A)\to M$. Combined with the universal property of $\Tres(U)$, this implies that $$\Omega_{\Tres(U)}\cong \Tres(U)\otimes U\otimes\Tres(U).$$ That the map $d:\Omega_{\Tres(U)}\to \Tres(U)\otimes \Tres(U)$ is as claimed follows e.g. from the comparison of (\ref{eq:shortresolution}) with a well-known two-term bimodule resolution of the ordinary tensor algebra of $U$ (viewed as a discrete vector space) over itself.
\end{proof}	
\begin{cor}\label{cor:resol}
	Given a pc module $M$ over $\Tres(U)$, there is the following two-term resolution of $M$:
	\[
	\Tres(U)\otimes U\otimes M{\longrightarrow} \Tres(U)\otimes M{\longrightarrow}M.
	\]
	where the righthand arrow is the action of $\Tres(U)$ on $M$ and the lefthand arrow has the form $1 \otimes u \otimes m \mapsto u\otimes m-1\otimes um$.
\end{cor}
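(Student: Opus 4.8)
The plan is to obtain the corollary by applying the base-change functor $-\otimes_{\Tres(U)}M$ to the two-term $\Tres(U)$-bimodule resolution of Lemma~\ref{lem:Lieconil}, regarding each of the three bimodules in (\ref{eq:shortresolution}) as a right $\Tres(U)$-module through its rightmost tensor factor.

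First I would record (\ref{eq:shortresolution}) as a short exact sequence of $\Tres(U)$-bimodules
\[
0\longrightarrow\Tres(U)\otimes U\otimes\Tres(U)\stackrel{d}{\longrightarrow}\Tres(U)\otimes\Tres(U)\stackrel{m}{\longrightarrow}\Tres(U)\longrightarrow0,
\]
i.e.\ $d$ is injective with image $\ker m$. As a sequence of right $\Tres(U)$-modules its two leftmost terms are topologically free, each being of the form $\Tres(U)\otimes W$ for a pc vector space $W$ with the right action on the first factor, and its cokernel is the free rank-one module $\Tres(U)$; since topologically free pc modules are flat, applying $-\otimes_{\Tres(U)}M$ preserves exactness. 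I would then use the canonical identifications $(\Tres(U)\otimes\Tres(U))\otimes_{\Tres(U)}M\cong\Tres(U)\otimes M$, $(\Tres(U)\otimes U\otimes\Tres(U))\otimes_{\Tres(U)}M\cong\Tres(U)\otimes U\otimes M$ and $\Tres(U)\otimes_{\Tres(U)}M\cong M$, in which the rightmost $\Tres(U)$-factor is absorbed into $M$ via the module action; under these, $m\otimes\id_M$ becomes the action of $\Tres(U)$ on $M$, while, using $d(1\otimes u\otimes1)=u\otimes1-1\otimes u$, the map $d\otimes\id_M$ becomes $1\otimes u\otimes m\mapsto u\otimes m-1\otimes um$. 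Thus $-\otimes_{\Tres(U)}M$ carries the displayed short exact sequence to the exact complex
\[
0\longrightarrow\Tres(U)\otimes U\otimes M\longrightarrow\Tres(U)\otimes M\longrightarrow M\longrightarrow0,
\]
which is precisely the asserted resolution.

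The homological content is entirely routine; the only point deserving a moment's care is that everything takes place in $\DGVectpc$ with the completed monoidal product, so one should confirm that topologically free pc bimodules $\Tres(U)\otimes W$ are indeed flat as right pc $\Tres(U)$-modules and that the associativity and unit constraints for $-\otimes_{\Tres(U)}-$ hold there. This runs parallel to the treatment of $\check T(U)$ used in the proof of Lemma~\ref{lem:Lieconil} (cf.\ \cite{GL}), so I do not anticipate a genuine obstacle: the real work lies in the bookkeeping with completions rather than in the homological algebra.
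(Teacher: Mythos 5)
Your argument is exactly the paper's: the corollary is obtained by tensoring the bimodule resolution (\ref{eq:shortresolution}) with $M$ over $\Tres(U)$, and your additional care about flatness of topologically free pc modules and the identification of the induced maps only fills in details the paper leaves implicit. No issues.
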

\begin{proof}
	This is obtained by tensoring (\ref{eq:shortresolution}) with $M$ over $\Tres(U)$.
\end{proof}	
\subsection{PBW theorem}
Recall that in the ordinary setting of Lie algebras and their universal enveloping algebras, we have an injection $\h\to \U(\h)$ from a Lie algebra $\h$ into its universal enveloping algebra. There is, furthermore, a filtration on $\U(\h)$ where $F_p(\U(\h))$ is formed by linear combinations of monomials in $\h$ of length no more than $p$.  The PBW theorem states that the associated graded to $\U(\h)$ is isomorphic as a Hopf algebra to $S(\h)$, the symmetric algebra on $\h$. There is an analogue of this statement for universal enveloping coalgebras due to Michaelis \cite{Mic} which states that for a Lie coalgebra $\g$, the graded vector spaces $\operatorname{gr} \Uc(\g)$ and $\operatorname{gr}\Sc(\g)$ are isomorphic as Hopf algebras. Here $\Sc(\g)$ stands for the cofree cocommutative coalgebra on the vector space  $\g$ and it is rather bigger than the ordinary symmetric (co)algebra. It is noted in op. cit. that without taking $\operatorname{gr}(\Sc)$ the statement is not true.

The statement of the dual PBW theorem was further strengthened in \cite[Theorem 4.9]{Block} where it was proved that $\operatorname{gr} \U(\g)$ is, in fact, isomorphic to $\Scon(\g)$, the conilpotent symmetric coalgebra on $\g$; note that $\Scon(\g)$ is rather smaller than $\Sc(\g)$.
This seemingly paradoxical answer can be explained as follows (assuming for simplicity that $\g$ is finite-dimensional). The PBW filtration  on $\Uc(\g)$
induces one on $(\Uc(\g))^*\cong \check{\U}(\g^*)$. This latter filtration is just the PBW filtration on $\U(\g^*)\subset \check{\U}(\g^*)$. It is topologically exhaustive (meaning that $\bigcup_p F_p\U(\g^*)=\U(\g^*)$ is dense in $\check{\U}(\g^*)$), however it is not exhaustive since $\U(\g^*)\neq \check{\U}(\g^*)$. The usual PBW theorem then gives that $\operatorname{gr}\check{\U}(\g^*)\cong S(\g^*)$, the ordinary symmetric algebra on $\g^*$ and the dual statement about $\operatorname{gr} \Uc(\g)$ follows.

Without the dualization, the same phenomenon corresponds to the fact that the PBW filtration of $\Uc(\g)$ is not complete. Because of this, the PBW theorem for Lie coalgebras is of limited utility for homological considerations; we will not use it in this paper.

\section{Koszul duality I: conilpotent Lie coalgebras}\label{section:Koszul1}
Given a  Lie coalgebra $\g$ and a $\g$-comodule $M$, one can form its Chevalley-Eilenberg complex $\CE(\g,M)$ whose underlying  vector space is $S\Sigma^{-1}\g\otimes M$ and the differential is induced by the internal differential in $\g$, cobracket in $\g$ and the coaction of $M$. We denote by $\CE(\g)$ the  algebra $\CE(\g,\ground)$; then $\CE(\g,M)$ is a  $\CE(\g)$-module. It seems that not much is known about this complex; in particular whether it can be viewed as computing a derived functor in an appropriate model category of comodules. An attempt to give an answer to this question leads one to the Koszul duality between $\g$-comodules and $\CE(\g)$-modules similar to the Koszul duality between comodules over a coalgebra and modules over its cobar construction, cf. \cite{Posi}. However, comodules over Lie coalgebras are not necessarily locally finite (unlike comodules over coassociative coalgebras) and this adds an additional subtlety to the theory.

\subsection{Bar-cobar adjunctions}
Let $\g$ be a locally finite  Lie coalgebra and $M$ be a locally finite $\g$-comodule.
Let $\{e_i\}$ be a basis in $\g$ and $\{e^i\}$ the dual basis in $\g^*$. Then the canonical element $$\xi:=\sum_ie_i\otimes e^i\in \CE(\g)\hat{\otimes} \g^*$$ is MC and, abusing the notation, we will  denote by the same letter the element in $\CE(\g)\hat{\otimes}\check{\U}(\g^*)$ corresponding to $\xi$ under the canonical inclusion
$\g^*\to \check{\U}(\g^*)$.

Now consider the  vector space  $\CE(\g){\otimes} M$. It is a $\CE(\g)\hat{\otimes}\check{\U}(\g^*)$-module and we can twist it by the element $\xi$.
\begin{defi}
	The  $\CE(\g)$-module $\CE(\g, M):=(\CE(\g)\otimes M)^{[\xi]}$ is called the Chevalley-Eilenberg complex of $\g$ with values in $M$.
\end{defi}
The functor $M\mapsto\CE(\g,M)$ is one part of the bar-cobar adjunction (namely it is the cobar construction of $M$). Given a $\CE(\g)$-module $N$, let us consider $N^*\hat{\otimes}  \check{\U}(\g^*)$. This is a $\CE(\g)\hat{\otimes}\check{\U}(\g^*)$-module and we can twist it by the element $\xi$. The resulting pc $\check{\U}(\g^*)$-module  $N^*\hat{\otimes} \check{\U}(\g^*)^{[\xi]}$ corresponds to a $\g$-comodule with the underlying space $N\otimes \Uc(\g)$ called the bar construction $B(N)$ of the $\CE(\g)$-module $N$.
\begin{theorem}
	The functors $\CE(\g,-)$ and $B(-)$ form an adjoint pair between the categories of $\CE(\g)$-modules and of locally finite $\g$-comodules.
\end{theorem}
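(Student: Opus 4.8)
The plan is to prove the adjunction by hand, exhibiting a natural bijection that identifies both Hom-sets with one and the same subspace of graded $\ground$-linear maps $M\to N$. On the module side, $\CE(\g,M)=(\CE(\g)\otimes M)^{[\xi]}$ is, after forgetting the twisted differential, the \emph{free} graded $\CE(\g)$-module on the graded vector space $M$; hence a morphism $\CE(\g,M)\to N$ of graded $\CE(\g)$-modules is precisely a graded linear map $f\colon M\to N$, extended by $\hat f(a\otimes m)=\pm a\cdot f(m)$. On the comodule side, $B(N)$ has as underlying graded $\Uc(\g)$-comodule the \emph{cofree} comodule $N\otimes\Uc(\g)$ on $N$; hence a morphism $M\to B(N)$ of graded comodules is precisely a graded linear map $f\colon M\to N$, recovered from $g\colon M\to N\otimes\Uc(\g)$ as $f=(1_N\otimes\varepsilon)\circ g$ with $\varepsilon\colon\Uc(\g)\to\ground$ the counit. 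Both identifications are visibly natural in $M$ and $N$, so it suffices to prove that the dg conditions on the two sides cut out the same subspace of $\Hom_\ground(M,N)$.

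Concretely, I would first record that $\xi=\sum_i e_i\otimes e^i$, which is Maurer--Cartan in $\CE(\g)\hat{\otimes}\g^*$, becomes a twisting element ($d\xi+\xi^2=0$) of the dg associative algebra $\CE(\g)\hat{\otimes}\check{\U}(\g^*)$ once $\g^*$ is included into the commutator Lie algebra of $\check{\U}(\g^*)$; this makes both twisted differentials legitimate. Evaluating the chain-map condition for $\hat f$ on the generators $1\otimes m$ and extending by $\CE(\g)$-linearity, one sees that $\hat f$ is a chain map if and only if $f$ satisfies a twisted compatibility identity of the shape
\[
d_N\bigl(f(m)\bigr)-(-1)^{|f|}f(d_Mm)=(-1)^{|f|}\sum_i e_i\cdot f\!\left(e^i\cdot m\right),
\]
where $e^i\cdot m$ is the action of $e^i\in\g^*$ on the comodule $M$ (the transpose of the coaction, a finite sum since $M$ is locally finite) and $e_i\cdot(-)$ the action of $e_i\in\Sigma^{-1}\g\subset\CE(\g)$ on $N$, with all signs imposed by the Koszul rule. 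I would then unwind the pseudocompact definition $B(N)=(N^*\hat{\otimes}\check{\U}(\g^*))^{[\xi]}$, transpose back to $N\otimes\Uc(\g)$, and check that the graded comodule map $g$ corresponding to $f$ is a chain map if and only if $f$ satisfies this \emph{same} identity: the internal differentials of $N$ and of $\Uc(\g)$ produce the left-hand side, and applying the twist by $\xi$ followed by the counit $\varepsilon$ reproduces exactly the term $\sum_i e_i\cdot f(e^i\cdot m)$, because $\xi$ is built from the pairing of $\g$ with $\g^*$ and the coaction on $M$ is dual to the $\g^*$-action. This yields the natural bijection $\hat f\leftrightarrow f\leftrightarrow g$.

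Two routine points remain. First, $\CE(\g,-)$ and $B(-)$ do take values in the stated categories: $\CE(\g,M)$ is a $\CE(\g)$-module by construction, and $B(N)$ is locally finite because its underlying graded $\Uc(\g)$-comodule $N\otimes\Uc(\g)$ is the union of the finite-dimensional subcomodules $W\otimes C$ with $W\subseteq N$ and $C\subseteq\Uc(\g)$ finite-dimensional, using that every coalgebra, in particular $\Uc(\g)$, is the union of its finite-dimensional subcoalgebras. Second, naturality of the bijection in $M$ and $N$ is immediate from the explicit formulas. The step I expect to be the main obstacle is the comodule-side computation of the second paragraph: since $B(N)$ is defined through the pseudocompact module $N^*\hat{\otimes}\check{\U}(\g^*)$ rather than directly on $N\otimes\Uc(\g)$, matching its chain-map condition with the identity above requires careful continuity and sign bookkeeping under pseudocompact duality. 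Granting this, the argument is a formal unraveling, and is in fact the evident pseudocompact analogue of the classical bar--cobar adjunction attached to a twisting cochain, here the one $\Uc(\g)\to\CE(\g)$ determined by $\xi$.
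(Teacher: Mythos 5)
Your proposal is correct and follows essentially the same route as the paper: both arguments use the freeness of $\CE(\g,M)$ as a graded $\CE(\g)$-module and the cofreeness of $B(N)$ as a graded $\Uc(\g)$-comodule to identify each Hom-set with graded linear maps $M\to N$, and then check that the two chain-map conditions coincide with the $\xi$-twisted differential on $\Hom(M,N)$. The paper states this more compactly by identifying both dg Hom-complexes with the single twisted complex $(M^*\hat{\otimes}N)^{[\xi]}$, which is exactly the identity you write out on degree-zero cycles.
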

\begin{proof}
Let $M$ be a locally finite $\g$-comodule and $L$ be a $\CE(\g)$-module. Then we can deduce that $\Hom_{\CE(\g)\MMod}(\CE(\g,M),L)$ has $\Hom(M,L)\cong M^*\hat{\otimes}L$ as its underlying graded vector space. It is then a $\check{\U}(\g^*)\hat{\otimes}\CE(\g)$-module and the differential in it
agrees with the differential in
$(M^*\hat{\otimes}L)^{[\xi]}$.

Similarly, the vector space $\Hom_{\g\CComod}(M,B(L))$ is isomorphic to $(M^*\hat{\otimes}L)^{[\xi]}$.
\end{proof}	
It is, therefore, natural to ask whether the above bar-cobar adjunction can be promoted to a Quillen equivalence. The following example shows that it is unlikely without restrictions on $\g$ beyond local finiteness.

\begin{example}\label{ex:sl2}
	Let $\g$ be the Lie coalgebra dual to $sl_2(\ground)$. Let us assume that $\ground$ is algebraically closed. Then $\CE(\g)$ is an algebra, quasi-isomorphic to $\Lambda(x)$, the exterior algebra with one generator in degree $3$. The derived category of $\CE(\g)$ is equivalent to that of $\Lambda(x)$ and the latter is
	not semisimple. Thus, any of the finer coderived categories of $\CE(\g)$
	are also not semisimple.
	On the other hand, the category of locally finite $sl_2(\ground)$-modules is semisimple and so the coderived category of $\Uc(\g)$ is likewise semisimple. 	
\end{example}

\subsection{Acyclic abelian Lie coalgebras} As a warm-up, we will compute the coderived categories for some of the simplest types of conilpotent Lie coalgebras, namely those which are abelian and have an acyclic differential. These results will also be needed later on.

An abelian acyclic Lie coalgebra is just an acyclic complex of $\ground$-vector spaces and as such, it is decomposed into a direct sum of elementary acyclic complexes of length 2 with basis vectors $x$ and $dx$. We will distinguish three types of such complexes:
\begin{enumerate}
	\item The degree of $x$ is $-1$. The corresponding (abelian) Lie coalgebra will be denoted by $\g_1$.
\item 	The degree of $x$ is $0$. The corresponding (abelian) Lie coalgebra will be denoted by $\g_2$.
\item The degree of $x$ is different from $0$ and $-1$. The corresponding (abelian) Lie coalgebra will be denoted by $\g_3$.
\end{enumerate}
\begin{prop}\label{prop:coderivedabelian} Assume that $\ground$ is algebraically closed. All three coderived categories above are semisimple. Moreover:\begin{enumerate}\item
	The coderived category of $\g_1$ has one simple object $\ground_\alpha$ for any $\alpha\in\ground$, up to shift. The endomorphism algebra of every simple object is $\ground$ and there are no non-zero morphisms between different simple objects.
	
\item	The coderived categories of $\g_2$ and $\g_3$ each have only one simple object up to shift and $\ground$ worth of its endomorphisms.
\end{enumerate}
\end{prop}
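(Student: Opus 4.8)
The plan is to compute, in each of the three cases, the coderived category of the universal enveloping coalgebra $\Uc(\g_i)$ directly, exploiting the fact that $\g_i$ is one of a handful of very small explicit Lie coalgebras, and to use the Hopf-algebraic description of $\Uc$ from Example \ref{ex:abelian} and its $\ground[\ground]\otimes\ground[x]$-type decompositions. Since each $\g_i$ is abelian, $\Uc(\g_i)$ is (up to the subtleties of cofreeness) the cofree coassociative coalgebra on the two-dimensional complex with basis $x,dx$, equivalently dual to the pc completion of $\ground[x,dx]$ (characteristic zero, so the enveloping algebra is just the symmetric/polynomial algebra on the underlying graded space, with the differential $x\mapsto dx$ adjoined). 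The coderived category of $\Uc(\g_i)$ is equivalent to the coderived category of comodules over this coalgebra, which by \cite[Section 8]{Posi} we may analyze through twisted (cofree) comodules; alternatively, and more transparently, we pass to the dual pc algebra $A_i:=[\Uc(\g_i)]^*$ and compute its (contra)derived category of modules. So the first step is to pin down $A_i$ explicitly: it is the completed polynomial algebra on the two generators $x,dx$ with $d(x)=dx$ in the appropriate degree.

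First I would dispose of $\g_2$ and $\g_3$. Here the key observation is that the differential $d\colon x\mapsto dx$ is, in the pc algebra $A_i$, a ``linear part'' that makes $A_i$ acyclic as a complex, and more strongly the Koszul-type complex witnessing this shows that $A_i$-modules in the coderived sense all become formal, collapsing to $\ground$-modules; concretely one can write down an explicit contracting homotopy (a version of the de Rham/Koszul contraction for the acyclic two-term complex $\ground x\oplus\ground\,dx$) and use it to show the coaugmentation $A_i\to\ground$ (dually $\ground\to\Uc(\g_i)$) is a coderived equivalence of the relevant module/comodule categories. The point where the degree of $x$ being $\neq 0,-1$ matters is that then there is no room for a nontrivial ``eigenvalue'' deformation of the trivial comodule (unlike in the $\g_1$ case where $|x|=-1$, allowing Maurer--Cartan elements $\alpha\,x\in\CE$, parametrized by $\alpha\in\ground$) and no degree-zero algebra obstruction (unlike $\g_2$ where $|x|=0$ allows group-like elements/localizations). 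So in cases $2$ and $3$ the coderived category is equivalent to $\DGVect$ itself, which is semisimple with the single simple object $\ground$ and $\End(\ground)=\ground$; the $\g_2$ case needs a small extra argument that the would-be extra simples coming from $\ground[\ground]$ are killed because $dx$ has degree $-1$ and trivializes them. I would present $\g_2$ and $\g_3$ together, noting the minor divergence in the bookkeeping.

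The case $\g_1$ is the substantive one and I expect it to be the main obstacle. Here $|x|=-1$, $|dx|=0$, so the dual pc algebra $A_1$ is the completed polynomial algebra on a degree-$0$ variable $t$ (the image of $dx$) and a degree-$(-1)$ odd variable, with differential sending the odd variable to $t$; after passing to cohomology in the ``odd direction'' one is left with comodules over (the cofree coalgebra on) the degree-$0$ line, i.e. exactly the situation of Example \ref{ex:abelian}: $\Uc\cong\ground[\ground]\otimes\ground[x]$, equivalently $\prod_{\alpha\in\ground}\ground[[x_\alpha]]$. Thus a finite-dimensional comodule decomposes, by the spectral decomposition of the commuting operator, as a direct sum of generalized eigenspaces, and each generalized eigenspace for eigenvalue $\alpha$ is, in the coderived category, equivalent to the ``nilpotent'' (conilpotent) block at $\alpha$; by the computation of the coderived category of a single nilpotent variable (a conilpotent one-dimensional Lie coalgebra — this is the abelian acyclic-free analogue already implicit in the conilpotent case and can be cited from the conilpotent Koszul duality, or proved by the two-term resolution of Corollary \ref{cor:resol}), that block is generated by the single simple $\ground_\alpha$ with $\End(\ground_\alpha)=\ground$, and there are no morphisms between blocks at distinct $\alpha$ because the corresponding idempotents are central and orthogonal. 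Assembling: the coderived category of $\g_1$ is the product over $\alpha\in\ground$ of copies of $\DGVect$, hence semisimple with simples $\{\ground_\alpha\}_{\alpha\in\ground}$ (up to shift), $\End(\ground_\alpha)=\ground$, and $\Hom(\ground_\alpha,\ground_\beta)=0$ for $\alpha\neq\beta$. The delicate point throughout is that comodules over $\Uc(\g_1)$ need not be conilpotent — exactly the difficulty flagged in the introduction — so one must genuinely use the product decomposition $\prod_\alpha\ground[[x_\alpha]]$ rather than pretending everything is conilpotent; handling the infinite product and verifying that the coderived localization commutes with it (i.e. that coacyclicity can be checked blockwise) is the technical heart of the argument, and I would do it by reducing to finite-dimensional comodules, where the decomposition is automatic, and then passing to filtered colimits using that the coderived category is compactly generated by the simples.
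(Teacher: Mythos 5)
Your overall strategy --- dualize to pc algebras, decompose over the closed points $\alpha\in\ground$, and analyze each block --- is the same as the paper's, and your treatment of $\g_3$ is fine in outline. For $\g_2$, however, the ``small extra argument'' you defer is really the whole point: in the block at $\alpha\neq 0$ one has $d(y_\alpha)=u_\alpha-\alpha$ with $u_\alpha-\alpha$ invertible, so the multiplicative unit is exact and every twisted module over that block is contractible; this, not a vague statement about degrees, is what kills the extra simples. The genuine gap is in $\g_1$. Your description of $A_1$ has the differential pointing the wrong way: since $|x|=-1$ and $d(x)=dx$, the dual algebra is $\ground[z,dz]$ with $|z|=0$ and $dz$ of degree $+1$, so the degree-zero variable is the \emph{source}, not the target, of the differential. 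This is not harmless bookkeeping: because $d(z-\alpha)=dz$ for every $\alpha$, each completed block $\ground[[z_\alpha,dz_\alpha]]$ carries the formal de Rham differential and is filtered quasi-isomorphic to $\ground$, which is exactly why \emph{all} blocks survive, each contributing one simple with endomorphisms $\ground$. Had the degree-zero variable been the target, as you wrote, the unit would be exact in every block with $\alpha\neq 0$ and $\g_1$ would behave like $\g_2$, contradicting part (1).

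More seriously, your step ``passing to cohomology in the odd direction'' discards the odd variable and reduces to $\prod_\alpha\ground[[x_\alpha]]$ with zero differential, i.e.\ to the situation of Example \ref{ex:abelian}. But the conilpotent block of a single degree-zero variable with no differential does \emph{not} have a semisimple coderived category: by conilpotent Koszul duality it is the derived category of an exterior algebra on a degree-one generator, so $\End(\ground_\alpha)$ would be a power series algebra in a degree-zero variable rather than $\ground$ --- the opposite of what you are trying to prove. The acyclic pairing of $z_\alpha$ with $dz_\alpha$ inside each block (the formal Poincar\'e lemma) is precisely what kills these higher self-extensions, and it is lost the moment you separate the odd direction from the even one. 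The paper's proof keeps both variables together in each block and uses the filtered quasi-isomorphism $\ground[[z_\alpha,dz_\alpha]]\simeq\ground$; you need to restore that step for your argument to go through.
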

\begin{proof}
	The case of $\g_3$ is the simplest: the universal enveloping coalgebra of $\g_3$ is conilpotent and its dual is the pc algebra $\ground[[a,da]]$ for some generator $a$. The latter algebra is filtered quasi-isomorphic to $\ground$ (with respect to the filtrations by the powers of the maximal ideal) and so the coderived category of $\Uc(\g_3)$   is equivalent to the category of graded vector spaces.
	
	Consider the linear dual of the universal enveloping coalgebra of $\g_2$, that is the pc completion of the ordinary enveloping algebra of $\g_2^*$. Denoting by $y$ and $z$ the dual basis vectors to $x$ and $dx$ respectively; we see that the enveloping algebra of $\g^*_2$ is $\ground[y,u]$ with $|y|=-1, |u|=0$ and $dy=u$. Its pc completion is isomorphic to $\prod_{\alpha\in\ground}\ground[[y_\alpha,u_\alpha]]$ with $d(y_\alpha)=u_{\alpha}-\alpha$. It is therefore a coproduct of conilpotent coalgebras, and so every $\g_2$-comodule is isomorphic to a coproduct of comodules over coalgebras dual to pc algebras $\ground[[y_\alpha,u_\alpha]]$. One of such coalgebras, namely the one corresponding to $\alpha=0$, is weakly equivalent as a conilpotent coalgebra to $\ground$ since it is isomorphic to $\Uc(\g_3)$ for a suitable $\g_3$; therefore its coderived category is equivalent to that of $\ground$. For $\alpha\neq 0$, the pc algebra $\ground[[y_\alpha,u_\alpha]]$ has the property that its multiplicative identity is exact; this implies that any twisted module over it is chain homotopic to zero.   It follows that the coderived category of $\g_2$ is equivalent to that of $\ground[[y_0,dy_0]]^*$, which is the category of graded vector spaces.
	
	Finally consider $\g_1$. Its linear dual of the universal enveloping coalgebra is the pc completion of the ordinary enveloping algebra of $\g_1^*$ and is isomorphic to $\prod_{\alpha\in\ground} \ground[[z_\alpha,dz_\alpha]]$. Here $|z_\alpha|=0$. Each of the pc algebras $\ground[[z_\alpha,dz_\alpha]]$ is dual to the conilpotent coalgebra whose coderived category is (as we saw before) the same as that of $\ground$. It follows that each such conilpotent coalgebra has the same coderived category as $\ground$ so the coderived category of $\g_1$ is equivalent to that of the cosemisimple coalgebra $\coprod_{\ground_\alpha},\alpha\in \ground$. The required conclusion follows.
\end{proof}

\subsection{Koszul complex for symmetric coalgebras}\label{subsection:Koszulcoalgebras}
Koszul complexes are classically used to construct resolutions of modules of Koszul quadratic algebras, such as $\operatorname{S}(V)$, the symmetric algebra on a graded vector space $V$. In this subsection we develop a similar theory for comodules over the cofree cocommutative  coalgebra $\Sc(V)$.

\begin{prop}\label{prop:Koszulresolution}
	Let $\g$ be a (dg) vector space viewed as an abelian Lie coalgebra and $\Uc(\g)$ be its universal enveloping coalgebra. Then for any locally finite $\g$-comodule $M$, the unit of the adjunction
	\begin{equation}\label{eq:unit} M\to B\circ\CE(\g,M)\end{equation} is a weak equivalence as a map between $\Uc(\g)$-comodules.
\end{prop}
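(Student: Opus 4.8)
The plan is to reduce the statement to the elementary building blocks classified in the previous subsection, i.e. the abelian acyclic Lie coalgebras $\g_1,\g_2,\g_3$ together with the one-dimensional abelian Lie coalgebra $\g_0$ concentrated in a single degree with zero differential, and then verify the claim directly for each of these. First I would observe that an arbitrary (dg) vector space $\g$, regarded as an abelian Lie coalgebra, decomposes as a (possibly infinite) direct sum of copies of such elementary pieces: an acyclic summand breaks into length-two complexes of types $\g_1,\g_2,\g_3$, and a summand with zero differential breaks into copies of shifts of $\g_0$. Since $\CE(\g,M)$, the bar construction $B(-)$, and the formation of the unit map $M\to B\circ\CE(\g,M)$ are all compatible with the relevant direct sum/colimit operations (on the Lie coalgebra side the universal enveloping coalgebra of a coproduct is the coproduct of the enveloping coalgebras, and the comodule $M$ is a filtered colimit of finite-dimensional comodules by local finiteness), and since weak equivalences of $\Uc(\g)$-comodules in this model structure are stable under the relevant filtered colimits, it suffices to treat each elementary piece, and within each piece, to treat a finite-dimensional $M$.

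Next I would carry out the computation for each elementary $\g$. For $\g$ with acyclic differential, the point is that $\CE(\g)$ is then quasi-isomorphic, and in fact filtered-quasi-isomorphic, to $\ground$ (this is essentially the content of the proof of Proposition \ref{prop:coderivedabelian}), so $\CE(\g,M)$ is a perturbation-small model that should compose back correctly; more precisely I would exhibit the composite $B\circ\CE(\g,M)$ as a twisted $\Uc(\g)$-comodule of the form $M\otimes\Uc(\g)$ with a differential twisted by the canonical MC element $\xi$, and then write down an explicit contracting data (a homotopy built from the acyclicity of $\g$, in the spirit of the standard Koszul complex of $S(V)$ over $\ground$) showing that the inclusion $M\hookrightarrow M\otimes\Uc(\g)$ is a weak equivalence. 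For the case $\g=\Sigma^{-j}\g_0$ with zero differential, $\Uc(\g)$ is the cofree cocommutative coalgebra on a one-dimensional space and the composite $B\circ\CE(\g,M)$ is precisely the classical Koszul resolution $M\otimes\Lambda(x)\otimes S(\xi)$-type complex, whose acyclicity over $\Uc(\g)$ is the dual of the statement that the Koszul complex resolves $\ground$ over $S(V)$; here one has to be slightly careful because $\Uc(\g)$ is larger than the divided-power/symmetric coalgebra, but a locally finite comodule only sees the conilpotent part plus the semisimple "eigenvalue" pieces, and on each such piece the argument of Proposition \ref{prop:coderivedabelian} applies.

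The main obstacle I anticipate is precisely this last point: for an abelian Lie coalgebra with nonzero differential of degree $0$ (the $\g_2$ case) or concentrated in degree $0$, $\Uc(\g)$ fails to be conilpotent, and the composite $B\circ\CE(\g,M)$ must be analysed one "generalized eigenvalue block" at a time, as in the decomposition $\check{\U}(\g_2^*)\cong\prod_{\alpha\in\ground}\ground[[y_\alpha,u_\alpha]]$ used earlier. I would handle it by noting that a locally finite $\g$-comodule $M$ splits as a coproduct indexed by these blocks, that $\CE(\g,-)$ and $B(-)$ respect the splitting, and that on the block with $\alpha\neq 0$ both $M$ and $B\circ\CE(\g,M)$ are coacyclic (the identity is exact, so every twisted comodule is contractible), while on the block $\alpha=0$ we are back to the conilpotent situation where the classical Koszul resolution argument applies verbatim. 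Assembling these cases yields that the unit \eqref{eq:unit} is a weak equivalence in general.
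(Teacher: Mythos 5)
Your verification of the elementary pieces is essentially in line with the paper's Lemma \ref{lem:partialresult}, but the reduction to those pieces contains a genuine gap. The universal enveloping coalgebra of a direct sum of Lie coalgebras is not the coproduct (direct sum) of the enveloping coalgebras but their \emph{tensor product}: $\Uc(\bigoplus_\alpha\h_\alpha)\cong\bigotimes_\alpha\Uc(\h_\alpha)$, just as $\U(\h_1\times\h_2)\cong\U(\h_1)\otimes\U(\h_2)$ classically. Consequently a finite-dimensional comodule $M$ over $\Uc(\g)$ does not decompose into pieces supported on the individual summands: already for $\g$ two-dimensional abelian in degree zero, $M$ amounts to a pair of commuting operators, which in general is not an external tensor product (or a direct sum of such) of comodules over the two one-dimensional factors. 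So your assertion that ``it suffices to treat each elementary piece, and within each piece a finite-dimensional $M$'' does not follow from compatibility with coproducts and colimits, and as stated the reduction fails.

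The paper closes this gap with two ingredients absent from your proposal. First, since $\g$ is abelian (hence cosolvable), Corollary \ref{cor2:filtration} shows that $M$ is weakly equivalent to a comodule with a filtration whose subquotients are one-dimensional or coacyclic, so one may assume $M$ is one-dimensional; a one-dimensional comodule over $\bigotimes_\alpha\Uc(\h_\alpha)$ \emph{is} a tensor product $\bigotimes_\alpha M_\alpha$ of one-dimensional comodules over the factors, and the unit of the adjunction becomes the tensor product of the units $\eta_\alpha$. Second, one needs Lemma \ref{lem:tensorprod}, asserting that a (possibly infinite) tensor product of weak equivalences of comodules over coalgebras $C_\alpha$ is a weak equivalence of $\bigotimes_\alpha C_\alpha$-comodules; this is a statement about coacyclicity of cofibres, proved by (transfinite) induction, and is not a formal consequence of functoriality. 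With these two steps supplied, your case-by-case analysis of the elementary pieces (including the eigenvalue-block treatment of the non-conilpotent degree-zero cases, which matches the paper's use of Proposition \ref{prop:coderivedabelian} and the extended bar construction) would complete the proof along essentially the paper's lines.
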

\begin{rem}
	Clearly the bar construction of any $\CE(\g,M)$-module is a fibrant $\Uc(\g)$-comodule; therefore Proposition \ref{prop:Koszulresolution} supplies a canonical fibrant resolution of any $\Sc(V)$-comodule where $V$ is a vector space. It is, therefore, a comodule version of the well-known Koszul resolution of a module over a symmetric algebra.
\end{rem}
Since $\g$ is abelian, it decomposes as a direct sum of one-dimensional abelian Lie coalgebras and the two-dimensional acyclic abelian Lie coalgebras of Proposition \ref{prop:coderivedabelian}. As a first step, we will prove the claim of Proposition \ref{prop:Koszulresolution} for each type separately.
\begin{lem}\label{lem:partialresult}
	The claim of Proposition \ref{prop:Koszulresolution} holds for one-dimensional abelian Lie coalgebras and for the three types of acyclic Lie coalgebras of Proposition \ref{prop:coderivedabelian}.
\end{lem}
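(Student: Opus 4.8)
The plan is to dispose of the four listed types of $\g$ one at a time, in each case making the bar--cobar composite $B\circ\CE(\g,-)$ explicit from the descriptions of $\Uc(\g)$ and $\CE(\g)$ in \S\ref{subsection:Koszulcoalgebras} and in the proof of Proposition \ref{prop:coderivedabelian}, and recognising the unit \eqref{eq:unit} as a Koszul-type resolution. First I would reduce to a finite-dimensional comodule $M$: both $\CE(\g,-)$ and $B(-)$ commute with filtered colimits of (co)modules --- the underlying graded spaces are $\CE(\g)\otimes M$ and $N\otimes\Uc(\g)$, and coactions and differentials are manifestly compatible with such colimits --- while filtered colimits of $\Uc(\g)$-comodules are exact and preserve coacyclicity, hence weak equivalences; so it is enough to test \eqref{eq:unit} on each finite-dimensional subcomodule of $M$. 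For $M$ finite-dimensional one reduces still further, by an induction on the length of a composition series (using that $\CE(\g,-)$ and $B(-)$ are exact on finite-dimensional (co)modules and that weak equivalences obey two-out-of-three and are closed under extensions of triangles), to the case that $M$ is simple.

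The decisive distinction is whether $\g$ has a nonzero component in degree $0$. If not --- this covers a one-dimensional abelian $\g$ in any degree $\ne 0$, and $\g_3$ --- then $[\Uc(\g)]^*\cong\widehat{\U}(\g^*)$ is a complete local dg algebra (the augmentation-ideal completion of the free graded-commutative algebra $\U(\g^*)$, which in the case of $\g_3$ also carries an acyclic differential), so $\Uc(\g)$ is conilpotent and, as observed in \S\ref{section:preliminaries}, every $\Uc(\g)$-comodule is conilpotent. In this situation the pair $(\CE(\g,-),B(-))$ is an instance of the bar--cobar Koszul-duality Quillen equivalence of \cite{Posi}, between modules over the augmented algebra $\CE(\g)$ and (conilpotent) comodules over its Koszul-dual conilpotent coalgebra $\Uc(\g)$; in particular the unit \eqref{eq:unit} is a weak equivalence. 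When $\Uc(\g)$ is moreover concentrated in a bounded range of degrees one does not even need this: the cofiber of \eqref{eq:unit} is then a bounded acyclic complex of comodules, hence coacyclic.

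If $\g$ does have a degree-$0$ component --- a one-dimensional abelian $\g$ in degree $0$, or $\g=\g_1$ or $\g=\g_2$ --- then $\Uc(\g)$ is not conilpotent and, as in the proof of Proposition \ref{prop:coderivedabelian}, $[\Uc(\g)]^*$ splits as a product $\prod_{\alpha\in\ground}R_\alpha$ of complete local dg algebras indexed by the points $\alpha$ of the (one-dimensional) degree-$0$ part. Correspondingly $M$, $\CE(\g,M)$ and $B\circ\CE(\g,M)$ decompose as products over $\alpha$, so it suffices to check \eqref{eq:unit} factor by factor. On each factor the dual coalgebra $R_\alpha^*$ is conilpotent and the restricted comodule is conilpotent over it --- the generators of the augmentation ideal of $R_\alpha$ either sit in a nonzero degree, hence act nilpotently on a finite-dimensional piece, or act as a nilpotent operator because we have passed to a generalized eigenspace --- so the previous paragraph applies factor by factor; the sole exception is $\g=\g_2$ with $\alpha\ne 0$, where instead, as in loc.\ cit., the unit of $R_\alpha$ is exact, so every twisted $R_\alpha$-module is contractible and \eqref{eq:unit} is automatically a weak equivalence there. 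Reassembling over $\alpha$ proves the lemma.

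I expect the main obstacle to be upgrading the statement that \eqref{eq:unit} is a quasi-isomorphism --- which is the classical content of the Koszul resolution --- to the statement that it is a weak equivalence, i.e.\ that its cofiber is \emph{coacyclic} and not merely acyclic; this is immediate in the bounded cases ($\g_1$, and the one-dimensional coalgebra in degree $0$), but for $\g_2$ and $\g_3$ (and the unbounded one-dimensional coalgebras of nonzero degree) it genuinely requires the conilpotent-coalgebra input of \cite{Posi}. A secondary point to check carefully is that the primary decomposition $M=\prod_\alpha M_\alpha$ is compatible both with the coaction and with the functors $\CE(\g,-)$ and $B(-)$, and that it interacts correctly with the reduction to finite-dimensional subcomodules made at the outset.
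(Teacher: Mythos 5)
Your reductions (to finite-dimensional, then simple, comodules) and your treatment of the one-dimensional cases are sound and close to the paper's: for one-dimensional $\g$ one has, on the nose, $\CE(\g)=\Omega(\Uc(\g))$ ($\g$ odd), $\Uc(\g)=B(\CE(\g))$ ($\g$ even, degree $\ne 0$), or $\Uc(\g)=\check B(\CE(\g))$ ($\g$ in degree $0$), so \cite[Theorems 6.3, 6.4]{Posi} and \cite[Theorem 3.10(2)]{GL} apply literally. For $\g_3$ neither identification holds on the nose, so ``an instance of the bar--cobar duality of \cite{Posi}'' is an overstatement; you would need to insert the comparison map $\Omega(\Uc(\g_3))\to\CE(\g_3)$ and check it is a quasi-isomorphism (both sides are quasi-isomorphic to $\ground$ here, so this is fixable; the paper instead reduces to the generator $M=\ground$ of the coderived category via Proposition \ref{prop:coderivedabelian} and computes directly).

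The genuine gap is in the cases with a degree-zero component, above all $\g_1$. The product decomposition $[\Uc(\g)]^*\cong\prod_\alpha R_\alpha$ decomposes $M=\bigoplus_\beta M_\beta$ and decomposes $B(N)=N\otimes\Uc(\g)$ in its \emph{last} tensor factor only; the functor $\CE(\g,-)$ lands in modules over the single algebra $\CE(\g)$, which does not split. Hence the $\alpha$-block of $B\circ\CE(\g,M)$ is $(\CE(\g)\otimes M\otimes C_\alpha)^{[\xi]}$ and still contains the cross terms $(\CE(\g)\otimes M_\beta\otimes C_\alpha)^{[\xi]}$ for all $\beta\ne\alpha$; the unit therefore does not reduce ``factor by factor'' as you claim, and one must separately prove that these cross terms are coacyclic (not merely acyclic) and that the diagonal terms give the identity. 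For $\g_1$ this is exactly where the content lies: $\CE(\g_1)\cong\ground[z,dz]$ is the \emph{polynomial} de Rham algebra, the cross term for $\ground_\alpha,\ground_\beta$ is the twisted complex $\ground[z,dz]^{[(\alpha-\beta)dz]}$, and its acyclicity for $\alpha\ne\beta$ rests on the fact that $f'+(\alpha-\beta)f=0$ has no nonzero polynomial solutions while $p'+(\alpha-\beta)p=q$ is always polynomially solvable --- a computation that fails for the completed algebra $\ground[[z,dz]]$ (where $e^{-(\alpha-\beta)z}$ is a solution) and therefore cannot follow from any blockwise conilpotent duality over $C_\alpha$. This is the step you defer as ``a secondary point to check carefully''; it is in fact the primary one, and the paper's proof for the two-dimensional cases is organized around supplying exactly this computation after reducing, via Proposition \ref{prop:coderivedabelian}, to the generators $\ground_\alpha$ of the coderived category.
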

\begin{proof} All of our constructions, including the adjoint pair $\CE(\g,-)$ and $B(-)$, commute with scalar extension from $\ground$ to its algebraic closure.
	Since a (dg) $\ground$-vector space is acyclic if and only if its scalar extension is,
	we may assume without loss of generality that $\ground$ is algebraically closed.

Suppose that $\g$ is one-dimensional and odd. In that case $\Uc(\g)$ is an exterior coalgebra on one generator, so it is conilpotent and the  map (\ref{eq:unit})  is just the ordinary cobar-bar resolution of a comodule over a conilpotent coalgebra, cf. \cite[Theorem 6.4]{Posi}.

If $\g$ is one-dimensional, even and \emph{not} concentrated in degree zero, then  $\Uc(\g)$ is a conilpotent symmetric algebra on one generator and is the bar construction of an exterior algebra on one generator. Again, the conilpotent Koszul duality for a bar construction, cf. \cite[Theorem 6.3]{Posi} gives that (\ref{eq:unit})  is a weak equivalence.

If $\g$ is one-dimensional sitting in degree zero, then $\Uc(\g)\cong\Sc(\g)$, the cofree cocommutative coalgebra on $\g$; note that it is \emph{not} conilpotent. In that case $\Uc(\g)$ is the \emph{extended} bar construction of an exterior algebra on one generator in degree 1 and the nonconilpotent Koszul duality \cite[Theorem 3.10 (2)]{GL} for a bar construction gives the claimed result.

Now suppose that $\g$ is a two-dimensional acyclic Lie coalgebra. To show that (\ref{eq:unit}) is a weak equivalence, it suffices to show that
for any $\g$-comodule $N$, there is an isomorphism $H^*[\Hom_{\Uc(\g)}(N, B\circ\CE(\g,M))]\cong [N,M]_*$ where $[N,M]_*$ stands for the graded vector space of maps $N\to M$ in the coderived category of $\g$-comodules. Note that \[\Hom_{\Uc(\g)}(N, B\circ\CE(\g,M))\cong \CE(\g, \Hom(N,M));\] this isomorphism holds for any Lie coalgebra $\g$, not necessarily acyclic or abelian. It suffices, therefore, to prove that there is an isomorphism
\begin{equation}\label{eq:quasii}
	[N,M]_*\cong\CE(\g, \Hom(N,M)).
\end{equation}

Now let $\g=\g_2$; then, according to Proposition \ref{prop:coderivedabelian} the coderived category of $\g$ is generated by one simple module $\ground$ (on which $\g$ coacts trivially). Therefore, it  suffices to assume that $M=N=\ground$, with the trivial coaction of $\g$. Clearly, $\CE(\g,\ground)\cong\CE(\g)$ has cohomology sitting in degree zero only and is isomorphic to $\ground$. The space of homotopy classes of maps $[\ground,\ground]_*$ is likewise isomorphic to $\ground$ and the map $\CE(\g)\cong [\ground,\ground]_*$ is clearly an isomorphism. The case $\g=\g_3$ is completely analogous.

The case $\g=\g_1$ is slightly different. Note that $\CE(\g_1)\cong \ground[z,dz]$, the polynomial de Rham algebra in one variable. Recall that the coderived category of $\g$-comodules is generated by simple objects $\ground_{\alpha}, \alpha\in\ground$ so that $[\ground_\alpha,\ground_\beta]_*=\begin{cases}\ground, \text{if~} \alpha=\beta,\\ 0, \text{otherwise.}\end{cases}$ Moreover, the $\g_1$-comodules $\ground_\alpha,\ground_\beta$ correspond to the MC elements $\alpha dz,\beta dz\in \ground[z,dz]$. Set $N=\ground_{\alpha}$ and $=M\ground_{\beta}$. Then $\Hom(N,M)\cong\ground_{\alpha-\beta}$ and $\CE(\g_1, \Hom(N,M))$ is computed by the twisted complex
$\ground[z,dz]^{[(\alpha-\beta) dz]}$ of $\ground[z,dz]$. The differential $d^{[\alpha-\beta]}$ in $\ground[z,dz]^{[(\alpha-\beta) dz,]}$ has the form
\[
d^{[\alpha,\beta]}(\omega)=d\omega+(\beta dz-\alpha dz)\omega
\]
for $\omega\in\ground[z,dz]$.

The zeroth cohomology of $\ground[z,dz]^{[(\alpha-\beta) dz]}$ correspond to polynomials $f$ satisfying the differential equation
\[
f'+(\alpha-\beta)f=0
\]
and this equation only has polynomial solutions when $\alpha=\beta$ and in that case the space of solutions is one-dimensional.
The first cohomology corresponds to the quotient space of all polynomials $q(z)$ modulo those of the form $p'+(\alpha-\beta)p$.
The differential equation $p'(z)+(\alpha-\beta)p(z)=q(z)$ always has a polynomial solution and we conclude that the first cohomology group is zero.

So the vector space $\CE(\g_1, \Hom(\ground_\alpha,\ground_\beta))$ is quasi-isomorphic to $\ground$ for $\alpha=\beta$ and to zero otherwise. It follows that the map (\ref{eq:quasii}) is an isomorphism and so the desired claim is proved for $\g_1$.
\end{proof}

\begin{lem}\label{lem:tensorprod}
 Let $f_\alpha : M_\alpha \to N_\alpha$ be a family of weak equivalences of comodules over coalgebras $C_\alpha,\alpha\in S$. Then
\[ \bigotimes_{\alpha\in S} f_\alpha: \bigotimes_{\alpha\in S}M_\alpha\to \bigotimes_{\alpha\in S}N_\alpha.\]
 is a weak equivalence of $\bigotimes_{\alpha\in S} C_{\alpha}$-comodules.
	\end{lem}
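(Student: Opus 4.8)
The plan is to reduce the statement to two special cases — a single tensor factor, and $|S|=2$ — and to reduce both of these to one elementary observation: tensoring (over $\ground$) a coacyclic complex of comodules with a fixed comodule yields a coacyclic complex.

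Recall that in the coderived model structure a morphism of comodules is a weak equivalence precisely when its mapping cone (cofiber) is coacyclic, and that the coacyclic complexes form a localizing subcategory of the homotopy category — triangulated and closed under arbitrary coproducts — generated by the totalizations of short exact sequences of comodules. First I would fix a $C'$-comodule $W$ and show that $-\otimes W\colon \Comod(C)\to\Comod(C\otimes C')$ preserves coacyclicity: since $\ground$ is a field this functor is exact, hence it descends to a triangulated functor on homotopy categories commuting with shifts and mapping cones, it preserves coproducts, and it carries the totalization of a short exact sequence $0\to A\to B\to D\to 0$ of $C$-comodules to the totalization of $0\to A\otimes W\to B\otimes W\to D\otimes W\to 0$. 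Thus the full subcategory of complexes $Z$ with $Z\otimes W$ coacyclic is a localizing subcategory containing the generators, so it contains every coacyclic complex. Since $\operatorname{Cone}(h\otimes\id_W)\cong\operatorname{Cone}(h)\otimes W$ for any morphism $h$, it follows that $h\otimes\id_W$ (and symmetrically $\id_V\otimes h$) is a weak equivalence whenever $h$ is.

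The case $|S|=2$ is then immediate: for $f\colon M\to N$ and $g\colon P\to Q$ write $f\otimes g=(f\otimes\id_Q)\circ(\id_M\otimes g)$, a composite of two weak equivalences, hence a weak equivalence; an induction on $|S|$ settles all finite $S$. For infinite $S$, with the tensor products understood as the filtered colimits over the finite subsets $T\subseteq S$ of $\bigotimes_{\alpha\in T}(-)$ — formed via the coaugmentations of the $C_\alpha$ and the compatible coaugmentations of the $M_\alpha$, $N_\alpha$, which the $f_\alpha$ are assumed to respect — I would regard every $\bigotimes_{\alpha\in T}f_\alpha$ as a morphism of $C:=\bigotimes_{\alpha\in S}C_\alpha$-comodules by corestriction along $\bigotimes_{\alpha\in T}C_\alpha\hookrightarrow C$. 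Corestriction along a coalgebra map preserves coacyclicity — by exactly the localizing-subcategory argument used above — hence preserves weak equivalences, so each $\bigotimes_{\alpha\in T}f_\alpha$ is a weak equivalence of $C$-comodules; and $\bigotimes_{\alpha\in S}f_\alpha=\colim_{T}\bigotimes_{\alpha\in T}f_\alpha$ is then a filtered colimit of such. The only external input required is that weak equivalences in the coderived model structure on $\Comod(C)$ are stable under filtered colimits (equivalently, that the class of coacyclic complexes is), which holds because $\Comod(C)$ is a locally finite Grothendieck category; this, rather than the purely formal finite case, is where the real content of the lemma lies.
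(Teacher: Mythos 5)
Your argument follows the same route as the paper's: reduce to the two-factor case via the identification $\operatorname{Cone}(f\otimes\id_{M_2})\cong \operatorname{Cone}(f)\otimes M_2$, compose the two one-sided weak equivalences, induct over finite $S$, and handle infinite $S$ by a colimit argument (which the paper phrases as transfinite induction and which, like your version, ultimately rests on coacyclic complexes being closed under filtered colimits). The only real difference is that you spell out the localizing-subcategory justification for why $-\otimes W$ preserves coacyclicity, a step the paper dismisses as ``clearly coacyclic''; the proof is correct.
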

\begin{proof}
Suppose first that $S$ consists of two elements. The condition that $f:M_1\to N_1$ is a weak equivalence means that the cofiber $C(f)$ of $f$ is coacyclic as a $C_1$-comodule. The cofiber of the map $M_1\otimes M_2\to N_1\otimes M_2$ has the form $C(f)\otimes M_2$ and it is clearly coacyclic as a $C_1\otimes C_2$-comodule. Thus, the $C_1\otimes C_2$-comodules $M_1\otimes M_2$ and $N_1\otimes M_2$ are weakly equivalent. Similarly $N_1\otimes M_2$ and $N_1\otimes N_2$ are weakly equivalent, so the claim is proved in this case and then, by induction, for any finite set $S$. The general result follows by transfinite induction.
\end{proof}
We now return to the proof of 	 Proposition \ref{prop:Koszulresolution}. While it relies on a result on solvable Lie coalgebras belonging to the following subsection, the reader can check that it does not cause the circularity of the argument.
\begin{proof}[Proof of Proposition \ref{prop:Koszulresolution}]
	Any $\Uc(\g)$-comodule is a filtered colimit (union) of its finite-dimensional subcomodules (as is the case for comodules over any coalgebra). A filtered colimit of weak equivalences is a weak equivalence and so it suffices to assume that $M$ is finite-dimensional.
	
	 Note that $\Uc(\g)$ is isomorphic to the tensor product of coalgebras of the form $\Uc(\h)$ where $\h$ is one of the coalgebras of Lemma \ref{lem:partialresult}. Let us write it as $\Uc(\g)\cong\bigotimes_{\alpha\in S}\Uc(\h_\alpha)$ where $S$ is some indexing set.
	Since $\g$ is abelian, $M$ belongs to the triangulated subcategory of the coderived category of $\Uc(\g)$ generated by one-dimensional comodules, by virtue of Corollary~\ref{cor2:filtration}. Hence we may assume that $M$ is one-dimensional. 	Thus
	$M\cong \bigotimes_{\alpha\in S} M_{\alpha}$, for some one-dimensional $\Uc(\h_\alpha)$-comodules $M_\alpha$, and the unit of the adjunction
	  $\eta:M\to B\circ\CE(\g,M)$
	  is isomorphic to the tensor product of the units
	  $\eta_\alpha: M_\alpha\to B\circ\CE(\g_\alpha,M_\alpha)$.
	  By Lemma~\ref{lem:partialresult}, each $\eta_\alpha$ is a weak equivalence of  $\Uc(\g_\alpha)$-comodules, and it follows by Lemma~\ref{lem:tensorprod} that $\eta$ is a weak equivalence  of $\Uc(\g)$-comodules.
	\end{proof}

\subsection{Koszul duality for locally finite comodules} Recall from \cite{GL} that the category $A$-$\Mod$ of modules over any  algebra  $A$ has the structure of a compactly generated model category of second kind where weak equivalences are the maps $L\to N$ which induce quasi-isomorphisms $\Hom_A(M,L)\to\Hom_A(M,N)$ for any finitely generated twisted $A$-module $M$ and fibrations are surjective maps. This model structure is different, in general, from the standard one; specifically a quasi-isomorphism of $A$-modules is not necessarily a weak equivalence in the above sense.  Denote by $\PerfII(A)$ the full subcategory of $A$-modules formed by finitely generated twisted $A$-modules and their retracts up to homotopy; these represent compact objects in the corresponding homotopy category of $A$-$\Mod$, the compactly generated coderived category of $A$.
For a conilpotent  Lie coalgebra $\g$, we consider this model structure on $\CE(\g)$-modules. Note that, owing to the conilpotency of $\g$, $\CE(\g)$ is cofibrant as a \emph{commutative}  algebra but not as an associative  algebra.

We need some results about the structure of (dg) modules over finite-dimensional solvable Lie algebras. Recall that, under the assumption that $\ground$ is algebraically closed, an ordinary (ungraded) finite-dimensional module over a solvable finite-dimensional Lie algebra $\g$ always possesses an invariant subspace of dimension 1. It follows that such a module always has a filtration whose associated graded module is a direct sum of 1-dimensional representations of $\g$. In the dg situation, the result is slightly more complicated.
\begin{lem}\label{lem:nilpotentfiltration}
Let $\g$ be a (dg) finite-dimensional solvable  Lie algebra over an algebraically closed field $\ground$ and $N$ be a finite-dimensional (dg) $\g$-module that has no proper (dg) $\g$-submodules. Then $N$ is either $1$-dimensional or $2$-dimensional. If $N$ is $2$-dimensional, then it is spanned by two vectors $x$ and $dx$ and the $\g$-module structure on $N$ is given by the formulas:
\[
a(x)=\alpha(a)x, \quad a(dx)=\alpha(a)dx,
\]
for $a\in \g$, $|a|=0$ where $\alpha$ is a linear function on $\g^0$ and
\[
a(x)=0, \quad a(dx)=\alpha(da)x,
\]
for $a\in\g$ and $|a|=-1$.
If $a\in\g$ has degree different from $0$ or $-1$, then $a$ acts as zero on $N$.
\end{lem}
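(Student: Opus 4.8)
The plan is to reduce first to the case of an abelian $\g$, then to strip off, degree by degree, the part of $\g$ acting on $N$ by nilpotent operators until $\g$ is concentrated in degrees $0$ and $-1$, and finally to treat the cases $d_N=0$ and $d_N\ne0$ separately. For the first reduction, note that the derived ideal $[\g,\g]$ is a $d$-stable graded ideal of $\g$, and since $\g$ is finite-dimensional solvable over the algebraically closed field $\ground$ of characteristic zero, Lie's theorem applied to the underlying ungraded representation shows that $[\g,\g]$ acts on $N$ by nilpotent operators. By Engel's theorem the subspace $N^{[\g,\g]}$ of vectors annihilated by $[\g,\g]$ is therefore nonzero, and one checks (using that $[\g,\g]$ is a $d$-stable ideal) that it is a dg $\g$-submodule; by hypothesis it equals $N$. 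Replacing $\g$ by the abelian dg Lie algebra $\g/[\g,\g]$, we may assume $\g$ abelian, so the action is a family of graded-commuting operators $\rho(a)$, $a\in\g$, satisfying $[d_N,\rho(a)]=\rho(da)$ (graded commutator).

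The main device is the following stripping principle: if $\mathfrak s\subseteq\g$ is a $d$-stable graded subspace such that $\rho(\mathfrak s)$ generates a nilpotent subalgebra of $\End(N)$, then the annihilator in $N$ of that subalgebra is a nonzero dg $\g$-submodule (again using that $\mathfrak s$ is $d$-stable and $\g$ abelian), hence all of $N$, so $\rho(\mathfrak s)=0$ and we may pass to $\g/\mathfrak s$. The hypothesis on $\rho(\mathfrak s)$ holds whenever $\mathfrak s$ lies entirely in positive degrees or entirely in negative degrees (operators of nonzero degree on a bounded complex are nilpotent, and products of operators whose degrees have a fixed sign eventually vanish), and also whenever $\rho(\mathfrak s)$ is a commuting family of nilpotent operators. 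Applying the principle to $\mathfrak s=\g^{\ge1}$, and then — once $\g$ sits in degrees $\le0$ with $d$ vanishing on $\g^0$ — to $\mathfrak s$ equal to the part of $\g$ in degrees $\le-2$ together with the degree-$(-1)$ cocycles, reduces us to $\g$ concentrated in degrees $-1,0$ with $\rho$ vanishing on the degree-$(-1)$ cocycles; splitting the two-term complex $\g$ we may write $\g=\mathfrak a\oplus\mathfrak h$, where $\mathfrak a=\bigoplus_k\langle v_k,w_k\rangle$ is a sum of two-term acyclic pieces with $w_k=dv_k$, $|v_k|=-1$, and $\mathfrak h=H^0(\g)$ is in degree $0$ with zero differential. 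If $d_N=0$ a dg submodule is just a graded submodule: stripping $\g^{\ne0}$ leaves only $\g^0$ acting while preserving the grading of $N$, and simplicity then forces $N$ to be concentrated in a single degree and simple over the abelian Lie algebra $\g^0$, hence one-dimensional. So from now on assume $d_N\ne0$.

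The operators $\rho(b)$, $b\in\g^0$, commute with each other and with $d_N$; decomposing $N$ into their generalized simultaneous eigenspaces, which are dg $\g$-submodules, simplicity gives $\rho(b)=\lambda(b)(\id+\nu)$ for a single functional $\lambda$ on $\g^0$ and a fixed nilpotent chain endomorphism $\nu$ of $N$ (here $\ker\lambda$ is stripped as a commuting family of nilpotents). If all the scalars $c_k:=\lambda(w_k)$ vanished, every $\rho(v_k)$ would graded-commute with $d_N$, whence $\ker d_N$ — using also that $\rho$ vanishes on the degree-$(-1)$ cocycles and $d$ on $\g^0$ — would be a dg $\g$-submodule; but $\ker d_N$ is neither $0$ (else $d_N$ is injective, hence zero, as $d_N^2=0$ on a finite-dimensional space) nor $N$ (as $d_N\ne0$), so some $c_k\ne0$. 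For such a $v:=v_k$ the operator $[d_N,\rho(v)]=c_k(\id+\nu)$ is invertible, so $s:=\bigl(c_k(\id+\nu)\bigr)^{-1}\rho(v)$ satisfies $d_Ns+sd_N=\id$ and $s^2=0$: thus $N$ is contractible, and the orthogonal idempotents $e=sd_N$, $f=d_Ns$ split $N=W\oplus d_NW$ with $W:=eN$ and $d_N$ restricting to an isomorphism $W\to d_NW$. One now checks that \emph{every} dg $\g$-submodule of $N$ has the shape $U\oplus d_NU$ for a subspace $U\subseteq W$, and that such a subspace is $\g$-stable exactly when $U$ is invariant under $\rho(\g^0)|_W$ and under the operators $Y_k:=\bigl(\rho(v_k)-t_k\rho(v)\bigr)|_W$, the scalar $t_k$ being chosen so that $\rho(v_k)-t_k\rho(v)$ graded-commutes with $d_N$. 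The $Y_k$ are square-zero and pairwise anticommute while $\rho(\g^0)|_W$ is a scalar times a unipotent operator, so together they generate a nilpotent subalgebra of $\End(W)$; simplicity of $N$ then forces $W$ to be an irreducible module over it, so $\dim W=1$ and $\dim N=2$. In that case $N=\langle x,d_Nx\rangle$ and the remaining formulas are forced by degree bookkeeping: a degree-$0$ chain endomorphism of a two-term complex is a scalar (so $\nu$ acts as $0$), every operator of degree different from $0$ and $-1$ vanishes, and comparing $[d_N,\rho(a)]=\rho(da)$ on $x$ gives $a(x)=0$, $a(d_Nx)=\alpha(da)x$ for $|a|=-1$, with $\alpha:=\lambda$.

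The real work is in the case $d_N\ne0$: arranging the stripping of the second paragraph so that one genuinely lands in degrees $-1,0$, and then, on the contractible module $N\cong W\oplus d_NW$, showing that \emph{every} submodule descends to a subspace of $W$ stable under the residual — necessarily nilpotently generated — action, which is exactly what pins $\dim W=1$. Keeping track of signs in the graded relations $[d_N,\rho(a)]=\rho(da)$ and in the anticommutativity of the degree-$(-1)$ operators needs care but is otherwise routine.
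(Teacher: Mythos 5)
Your proof is correct in substance, but it takes a genuinely different and much longer route than the paper's, which disposes of the lemma in a few lines: by Lie's theorem the underlying ungraded $\g$-module $N$ contains a one-dimensional invariant subspace $\langle x\rangle$; comparing homogeneous components of $ax=\mu(a)x$ at the extreme degrees shows that every $a$ of nonzero degree kills $x$, so each homogeneous component of $x$ again spans an invariant line and one may take $x$ homogeneous; the Leibniz identity $d(ax)=(da)(x)+(-1)^{|a|}a(dx)$ then shows $\langle x,dx\rangle$ is a dg submodule, hence all of $N$, and the displayed formulas drop out of the same identity. Your approach --- abelianization, the stripping principle for $d$-stable nilpotently acting subspaces, and the analysis of the contractible case via the square-zero homotopy $s$ --- in effect reclassifies the simple dg modules from scratch; it is more robust (it never needs a one-dimensional invariant line to start from, only Engel-type common kernels) but proves more than is needed. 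If you keep your version, two slips should be repaired. First, the subalgebra of $\End(W)$ generated by $\rho(\g^0)|_W$ together with the $Y_k$ is not literally nilpotent, since it contains the invertible operator $\id+\nu$; the correct statement is that it is $\ground\cdot\id$ plus a nilpotent ideal $J$, and then $JW\subsetneq W$ forces $JW=0$ and $\dim W=1$ by irreducibility. Second, ``every operator of degree different from $0$ and $-1$ vanishes by degree bookkeeping'' is false for degree $+1$ (such an operator may send $x$ to a multiple of $dx$); that case is excluded only because you stripped $\g^{\ge 1}$ at the outset, so the vanishing there should be attributed to the stripping, not to degrees.
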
	
\begin{proof}
Since $\g$ is solvable, we know that $N$ has a 1-dimensional $\g$-invariant submodule; let $x$ be its basis vector. Note that any element in $\g$ of degree different from zero must act on $x$ trivially for degree reasons. Therefore, every homogeneous component of $x$ itself spans an invariant $\g$-submodule and so we assume, without loss of generality, that $x$ is homogeneous. Then for any $a\in \g$ with $|a|=0$, we have $ax=\alpha(a)x$ for some linear function $\alpha$ on $\g^0$ and $ax=0$ if $|a|\neq 0$. Next,
\[d(ax)=da(x)+(-1)^{|a|}adx.\]
If $|a|=0$, necessarily $da(x)=0$ and $a(dx)=\alpha(a)dx$. If $|a|=-1$, then $ax=0$ and
\[da(x)=\alpha(da)x=a(dx)\]
as claimed. A similar reasoning shows that if $|a|\neq 0,1$ then $ax=a(dx)=0$. 		
\end{proof}		

The next result shows that the existence of this exotic 2-dimensional $\g$-module does not matter, after all, as it represents zero in the coderived category of (locally finite) $\g^*$-comodules.
\begin{lem}\label{lem:coacyclic}
Let $\g$ be a finite-dimensional (dg) Lie coalgebra over an algebraically closed field~$\ground$ whose dual $\g^*$ is solvable and let $N$ be a $2$-dimensional $\g^*$-module constructed in Lemma \ref{lem:nilpotentfiltration}. Then $N$ is coacyclic as a $\g$-comodule.
\end{lem}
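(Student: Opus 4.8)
The plan is to prove the stronger assertion that $N$ is \emph{contractible} as a dg $\g$-comodule: once $\id_N$ is shown to be null-homotopic through comodule maps, $N$ becomes isomorphic to zero in the homotopy category of $\g$-comodules, hence also in the coderived category, and is therefore coacyclic. Since $N$ is finite-dimensional, a $\g$-comodule structure on $N$ is the same datum as a continuous $\g^*$-module structure, with comodule morphisms corresponding to $\g^*$-module morphisms, so it suffices to produce a contracting homotopy in the dg category of $\g^*$-modules. Recall from Lemma \ref{lem:nilpotentfiltration} that $N$ has a homogeneous basis $\{x,\,dx\}$ with $d_N x = dx$, and that the $\g^*$-action is $a\cdot x=\alpha(a)x$, $a\cdot dx=\alpha(a)\,dx$ for $|a|=0$; $a\cdot x=0$, $a\cdot dx=\alpha(da)\,x$ for $|a|=-1$; and the zero action for $a$ of any other degree.

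The key step is to take the degree $-1$ map $h\colon N\to N$ given by $h(dx)=x$ and $h(x)=0$. The identity $d_N h+h\,d_N=\id_N$ is immediate from $d_N x=dx$ and $d_N(dx)=0$, so what remains is to verify that $h$ is $\g^*$-linear of degree $-1$, i.e.\ $h(a\cdot v)=(-1)^{|a|}\,a\cdot h(v)$ for homogeneous $a\in\g^*$ and $v\in N$. This is a short case analysis on the degree of $a$: when $|a|\notin\{0,-1\}$ both sides vanish; when $|a|=0$ the only nontrivial case is $v=dx$, where $h(a\cdot dx)=\alpha(a)h(dx)=\alpha(a)x=a\cdot x=a\cdot h(dx)$; and when $|a|=-1$ the ostensibly dangerous case $v=dx$ is harmless, because the twisting term $a\cdot dx=\alpha(da)x$ lies in $\ground x=\Ker h$, so $h(a\cdot dx)=0$, while $a\cdot h(dx)=a\cdot x=0$ as well. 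Hence $h$ is a morphism of dg $\g^*$-modules, equivalently of dg $\g$-comodules, so $N$ is contractible and the lemma follows.

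I do not expect a real obstacle here. The only things needing care are the sign convention for a degree $-1$ module morphism --- which is what produces the factor $(-1)^{|a|}$, immaterial in our situation since $a\cdot x=0$ whenever $|a|=-1$ --- and the standard fact that a contractible complex of comodules is coacyclic (the class of coacyclic complexes is a thick subcategory of the homotopy category, hence contains every object isomorphic to zero in the coderived category; cf.\ \cite[Section 8]{Posi}). It is worth noting, conceptually, that $N$ is a \emph{simple} $\g$-comodule --- it has no proper dg subcomodules by construction --- and hence is not built as an extension of one-dimensional comodules; nevertheless it is coacyclic, which is precisely what makes these exotic two-dimensional modules invisible in the coderived category.
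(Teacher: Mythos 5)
Your proof is correct, and it takes a genuinely different route from the paper's. The paper does not exhibit a homotopy at all: it observes that the $\g^*$-action on $N$ factors through the two-dimensional acyclic abelian Lie algebra spanned by $a$ and $da$ with $|a|=-1$ (the dual of the Lie coalgebra $\g_2$), so that $N$ is obtained by corestriction from a $\g_2$-comodule, and then invokes the computation of the coderived category of $\g_2$ in Proposition \ref{prop:coderivedabelian}, which shows that every acyclic $\g_2$-comodule is coacyclic. Your argument instead produces an explicit contracting homotopy $h$ with $h(dx)=x$, $h(x)=0$, and verifies by the case analysis on $|a|$ that $h$ is a graded $\g^*$-module (equivalently $\g$-comodule) map of degree $-1$; the verification is right, and the only potentially delicate point --- the Koszul sign for $|a|=-1$ --- is indeed immaterial because both sides vanish there. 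This buys you a strictly stronger conclusion ($N$ is contractible, i.e.\ zero already in the homotopy category of comodules, not merely in the coderived category), makes the lemma independent of Proposition \ref{prop:coderivedabelian} and of the algebraic closedness of $\ground$ (which is only needed to produce $N$ in the first place), and is arguably the more transparent argument. The one phrasing I would tighten is your justification that contractible implies coacyclic: the cleanest statement is that a contractible comodule is isomorphic to zero in the homotopy category, and the coacyclic comodules form a (thick) triangulated subcategory of the homotopy category containing the zero object, hence contain every contractible comodule; saying it ``contains every object isomorphic to zero in the coderived category'' is circular as written. Your closing remark that $N$ is simple yet coacyclic, hence invisible in the coderived category without being an extension of one-dimensional comodules, is exactly the point of the lemma and worth keeping.
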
	
\begin{proof}
Consider an abelian Lie algebra spanned by two vectors $a$ and $da$ with $|a|=-1$ (so it's acyclic) and acting on $N$ by the formulas:
\[
(da)(x)=\alpha x, (da)(dx)=\alpha(dx);
a(x)=0, a(dx)=\alpha x
\]
where $\alpha\in\ground$.	Note that the dual to this abelian Lie algebra is the (abelian) Lie coalgebra $\g_2$ of Proposition \ref{prop:coderivedabelian}.  Then it is clear that $\g^*$ acts on $N^*$  through a Lie algebra map $\g^*\to\g_2^*$ and so, if $N^*$ is coacyclic as an $\g_2$-comodule, it is also coacyclic as an $\g$-comodule.

From the description of $\g_2$-comodules in Proposition \ref{prop:coderivedabelian} it follows that all $\g_2$-comodules that are nontrivial in the coderived category of $\g_2$-comodules, have nontrivial cohomology. Since $N^*$, and therefore also $N$, is acyclic, it represents the zero object in the coderived category and so, is coacyclic.
\end{proof}	

\begin{cor}\label{cor:filtration}
	Let $\g$ be a finite-dimensional (dg) Lie coalgebra  over an algebraically closed field $\ground$ whose dual $\g^*$ is solvable and $M$ be a (dg) finite-dimensional $\g$-comodule. Then  $M$ is weakly equivalent to a comodule having a finite filtration whose subquotients are either one-dimensional or coacyclic.
\end{cor}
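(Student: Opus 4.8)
The plan is to dualise. Since $M$ is finite-dimensional, the $\g$-comodule structure on $M$ is the same datum as a dg module structure over $\g^*$, which is a finite-dimensional solvable dg Lie algebra over the algebraically closed field $\ground$ (see Section~\ref{section:preliminaries}); so it is enough to produce the required filtration for the dg $\g^*$-module $M^*$ and then transport it back through $\ground$-linear duality. As $M^*$ is finite-dimensional, its poset of dg $\g^*$-submodules has finite length, so I may choose a composition series
\[
0 = A_0 \subset A_1 \subset \dots \subset A_n = M^*
\]
of dg $\g^*$-submodules in which each subquotient $A_i/A_{i-1}$ has no proper non-zero dg $\g^*$-submodule. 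Lemma~\ref{lem:nilpotentfiltration} then applies to each $A_i/A_{i-1}$: it is either $1$-dimensional or isomorphic to the exotic $2$-dimensional module $N$ spanned by $x$ and $dx$, which is in particular acyclic.

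Next I would dualise the whole filtration. Applying $\ground$-linear duality, together with the canonical isomorphism $M \cong (M^*)^*$, turns $0 = A_0 \subset \dots \subset A_n = M^*$ into a finite filtration of $M$ by $\g$-subcomodules $0 = B_0 \subset B_1 \subset \dots \subset B_n = M$ with $B_i/B_{i-1} \cong (A_{n-i+1}/A_{n-i})^*$; here one uses that the $\ground$-linear duals of quotient modules of $M^*$ are precisely the subcomodules of $M$. Hence each $B_i/B_{i-1}$ is either a $1$-dimensional $\g$-comodule or the $\g$-comodule dual to $N$, i.e.\ a $2$-dimensional acyclic comodule of exactly the kind treated in Lemma~\ref{lem:coacyclic}, and in the latter case it is coacyclic. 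Thus $M$ itself carries a finite filtration with subquotients that are either $1$-dimensional or coacyclic, so the weak equivalence asked for in the statement may be taken to be the identity of $M$.

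The argument is essentially an assembly of Lemmas~\ref{lem:nilpotentfiltration} and~\ref{lem:coacyclic}, and the only point that needs a little care is the duality bookkeeping: recording that a finite-dimensional $\g$-comodule corresponds to a finite-dimensional $\g^*$-module, that subcomodules of $M$ match quotient modules of $M^*$ (so that dualising a composition series of $M^*$ genuinely yields a subcomodule filtration of $M$), and that the $2$-dimensional subquotients that arise are, up to isomorphism, the comodules whose coacyclicity is established in Lemma~\ref{lem:coacyclic}. Note that the hypotheses that $\g^*$ is solvable and that $\ground$ is algebraically closed enter only through the classification of the simple dg $\g^*$-modules in Lemma~\ref{lem:nilpotentfiltration}.
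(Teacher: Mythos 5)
Your argument is correct and is essentially the paper's own proof: both reduce to Lemma~\ref{lem:nilpotentfiltration} to identify the minimal subquotients as one-dimensional or the exotic two-dimensional acyclic module, and to Lemma~\ref{lem:coacyclic} for the coacyclicity of the latter. The only cosmetic difference is that the paper builds the filtration by repeatedly extracting quotient comodules of $M$ directly, whereas you take a composition series of the dual module $M^*$ and dualise it back; the duality bookkeeping you carry out is exactly what makes these two formulations equivalent.
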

\begin{proof} By Lemma \ref{lem:coacyclic}, the comodule $M$ must have a quotient $L$ that is either two-dimensional coacyclic or one-dimensional. Then the kernel $M_1$ of the map $M\to L$ has the dimension less than $\dim M$  and also has a similar quotient. Continuing this process, we obtain a sequence of $\g$-comodules $0\hookrightarrow M_n\hookrightarrow M_{n-1}\hookrightarrow\ldots\hookrightarrow M_0=M$
such that each quotient $M_i/M_{i-1}$ has the required form.
\end{proof}	
Let us call a Lie coalgebra \emph{cosolvable} if it is a union of finite-dimensional Lie coalgebras whose duals are solvable Lie algebras. Then, Corollary \ref{cor:filtration} has the following straightforward generalization:
\begin{cor}\label{cor2:filtration}
	Let $\g$ be a cosolvable (e.g. conilpotent) Lie coalgebra over an algebraically closed field $\ground$ and $M$ be a locally finite $\g$-comodule. Then $M$ is weakly equivalent to a comodule having an increasing filtration by subcomodules whose subquotients are either coacyclic or $1$-dimensional.
\end{cor}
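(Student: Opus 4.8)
The plan is to lift Corollary~\ref{cor:filtration} from finite-dimensional comodules to locally finite ones by a transfinite exhaustion of $M$, the only genuinely new ingredient being a mechanism for transporting the adjectives ``one-dimensional'' and ``coacyclic'' along a change of Lie coalgebra.

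First I would record two preliminary reductions. (i) Every finite-dimensional subcomodule $P\subseteq M$ is in fact a comodule over a finite-dimensional Lie subcoalgebra $\g_\alpha\subseteq\g$ whose dual $\g_\alpha^*$ is solvable: the coaction of the finite-dimensional $P$ factors through $\g'\otimes P$ for a finite-dimensional subspace $\g'\subseteq\g$, and since $\g$ is a filtered union of finite-dimensional solvable-dual Lie subcoalgebras, $\g'$ lies in one of them, $\g_\alpha$; the $\g_\alpha$-comodule identities follow from the $\g$-comodule identities by injectivity of $\g_\alpha\otimes\g_\alpha\otimes P\hookrightarrow\g\otimes\g\otimes P$. (ii) Corestriction of comodules along the inclusion $\g_\alpha\hookrightarrow\g$ carries one-dimensional comodules to one-dimensional comodules and coacyclic comodules to coacyclic comodules; the latter because corestriction is exact and preserves coproducts, hence sends the totalization of a short exact sequence of $\g_\alpha$-comodules to the totalization of a short exact sequence of $\g$-comodules, and the coacyclic comodules form the closure of such totalizations under extensions and coproducts, cf.\ \cite[Section~8]{Posi}.

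With these in hand I would run a Zorn's lemma argument. Let $\mathcal F$ be the poset of pairs $(N,\phi)$, where $N\subseteq M$ is a subcomodule and $\phi$ is an increasing filtration of $N$ by subcomodules with subquotients one-dimensional or coacyclic, ordered so that $(N,\phi)\le(N',\phi')$ when $\phi$ is an initial segment of $\phi'$ (so in particular $N\subseteq N'$). A chain in $\mathcal F$ has an upper bound, obtained by taking unions and concatenating filtrations, so $\mathcal F$ has a maximal element $(N,\phi)$. If $N\ne M$, then by local finiteness there is a finite-dimensional subcomodule $P\subseteq M$ not contained in $N$, whence $Q:=(N+P)/N\cong P/(P\cap N)$ is a nonzero finite-dimensional $\g$-comodule. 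By reduction~(i), $Q$ is a comodule over some finite-dimensional $\g_\alpha$ with $\g_\alpha^*$ solvable, and the proof of Corollary~\ref{cor:filtration} then equips $Q$ with a finite filtration by $\g_\alpha$-subcomodules whose subquotients are one-dimensional or two-dimensional coacyclic. Pulling this filtration back to the interval of subcomodules between $N$ and $N+P$ and using reduction~(ii) to reinterpret the subquotients over $\g$, one extends $\phi$ to a strictly longer filtration of $N+P$, contradicting maximality. Hence $N=M$, so $M$ itself, and hence trivially a comodule weakly equivalent to $M$, has a filtration of the required kind.

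The step I expect to be the main obstacle is reduction~(ii): coacyclicity is a homotopical, not merely cohomological, condition, so that corestriction preserves it is the one point requiring care; everything else is bookkeeping built on the finite-dimensional case. Should that argument prove delicate, an alternative is to carry the sharper property ``one-dimensional, or two-dimensional acyclic of the special form in Lemma~\ref{lem:nilpotentfiltration}'' through the entire induction and to verify at the end, via the factorization $\g^*\twoheadrightarrow\g_\alpha^*\to\g_2^*$ and the description of the coderived category of $\g_2$ in Proposition~\ref{prop:coderivedabelian}, that such two-dimensional subquotients are coacyclic as $\g$-comodules.
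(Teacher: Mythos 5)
Your proof is correct and is precisely the ``straightforward generalization'' of Corollary~\ref{cor:filtration} that the paper asserts without proof: exhaust the locally finite comodule by finite-dimensional subcomodules, each of which is a comodule over a finite-dimensional solvable-dual Lie subcoalgebra, and splice the resulting finite filtrations together transfinitely. Your reduction~(ii) correctly isolates the only point needing comment --- that corestriction along $\Uc(\g_\alpha)\to\Uc(\g)$ preserves coacyclicity, which holds since it is the identity on underlying complexes and hence is exact, preserves coproducts and commutes with totalizations.
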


\begin{theorem}\label{thm:coreflective}
Let $\g$ be a conilpotent Lie coalgebra.	The adjoint pair $B(-)$ and $\CE(\g,-)$ determine a Quillen adjunction between model categories $\Uc(\g)\CComod$ and $\CE(\g)$-$\Mod$; moreover, the composition $B\circ \CE(\g,-)$ is an endofunctor on $\Uc(\g)\CComod$ which induces a functor isomorphic to the identity on the homotopy category of $\Uc(\g)\CComod$.

\end{theorem}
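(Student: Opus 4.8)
\emph{Strategy.} I would prove the two assertions in turn: the Quillen adjunction, which is essentially formal, and then the triangle identity $B\circ\CE(\g,-)\simeq\id$, which I would deduce from the abelian case (Proposition~\ref{prop:Koszulresolution}) by an inductive reduction along the structure of the nilpotent Lie algebra $\g^*$.

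\emph{The Quillen adjunction.} It suffices to check that the left adjoint $\CE(\g,-)$ preserves cofibrations and that the right adjoint $B(-)$ preserves fibrations; by the adjointness of the lifting properties these two facts together are equivalent to $B(-)$ preserving both fibrations and trivial fibrations, hence to a Quillen adjunction. A cofibration of $\Uc(\g)$-comodules is a monomorphism $M\hookrightarrow M'$; since the underlying graded $\CE(\g)$-module of $\CE(\g,N)$ is the twisted (hence cofibrant) module $\CE(\g)\otimes N$, a graded splitting $M'\cong M\oplus(M'/M)$ exhibits $\CE(\g,M)\to\CE(\g,M')$ as a degreewise split inclusion with twisted cokernel $\CE(\g)\otimes(M'/M)$, which is a cofibration. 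A fibration of $\CE(\g)$-modules is simply a surjection $N\twoheadrightarrow N'$, and, $B$ being exact, it is sent to the surjection $N\otimes\Uc(\g)\twoheadrightarrow N'\otimes\Uc(\g)$ whose kernel $\ker\otimes\Uc(\g)$, being cofree, is graded-injective and hence fibrant.

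\emph{Reduction of the triangle identity.} Having the Quillen adjunction, observe that fibrations in $\CE(\g)\MMod$ are the surjections, so every object there is fibrant and the right Quillen functor $B$ preserves all weak equivalences; dually, by Ken Brown's lemma $\CE(\g,-)$ preserves all weak equivalences, every comodule being cofibrant. Hence the class $\mathcal{S}$ of comodules $M$ for which the unit $\eta_M\colon M\to B\CE(\g,M)$ is a weak equivalence is closed under weak equivalences, shifts, retracts and filtered colimits, and — using exactness of the underlying functors of $\CE(\g,-)$ and $B$ together with the five lemma in the triangulated homotopy category — under extensions. By Corollary~\ref{cor2:filtration} every comodule is weakly equivalent to one carrying an increasing filtration with coacyclic or one-dimensional subquotients, so it is enough to show that $\mathcal{S}$ contains the coacyclic comodules and the one-dimensional ones. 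If $M$ is coacyclic then $\CE(\g,M)$ is weakly trivial, $B\CE(\g,M)$ is coacyclic, and $\eta_M$ is a weak equivalence since any map between coacyclic comodules is one.

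\emph{The one-dimensional case, and the main obstacle.} The remaining point, which I expect to be the hard part, reduces to the following compatibility: \emph{if $\iota\colon\h\hookrightarrow\g$ is a Lie subcoalgebra and $N$ is an $\h$-comodule with $\eta^{\h}_{N}$ a weak equivalence, then $\eta^{\g}_{\iota_{*}N}$ is a weak equivalence}. Granting it, a one-dimensional comodule $\ground_{v}$ (given by a closed $v\in\g^{0}$ with $\delta_{\g}v=0$) is $\iota_{*}$ of the one-dimensional comodule over the abelian subcoalgebra $\ground v\subseteq\g$, for which $\eta$ is a weak equivalence by Proposition~\ref{prop:Koszulresolution}; and the same compatibility, applied to the inclusions $\g_{\alpha}\hookrightarrow\g$, reduces the entire statement to finite-dimensional $\g$. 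To prove the compatibility I would combine the base-change identity $\CE(\g,\iota_{*}N')\cong\CE(\g)\otimes_{\CE(\h)}\CE(\h,N')$ with naturality of the bar--cobar adjunction in $\g$ to restate it as: the comparison map $\iota_{*}\bigl(B^{\h}N\bigr)\to B^{\g}\bigl(\CE(\g)\otimes_{\CE(\h)}N\bigr)$ is a weak equivalence for cofibrant $N$. One then filters $\g$ by subcoalgebras with one-dimensional successive quotients — possible because the kernel of $\g^{*}\twoheadrightarrow\h^{*}$ is a nilpotent ideal and so admits a complete flag of $\g^{*}$-ideals — reducing to the case where $\h'=\g/\h$ is one-dimensional. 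In that case $\CE(\g)$ is $\CE(\h)$ with a single generator adjoined (Hochschild--Serre) and $\Uc(\g)$ is, as a left $\Uc(\h)$-comodule, of the form $\Uc(\h)\otimes\Uc(\h')$ (a relative analogue of the PBW theorem), so the comparison map becomes, up to these identifications, the tensor product of $\id_{B^{\h}N}$ with the unit $\ground\to B^{\h'}\CE(\h')$, a weak equivalence by Proposition~\ref{prop:Koszulresolution} and Lemma~\ref{lem:tensorprod}. The main obstacle is establishing this relative PBW decomposition and, above all, its compatibility with the twisted differentials: the PBW-type filtration involved is exhaustive but not complete (compare the discussion of the PBW theorem above), so the argument must be carried out in the coderived category rather than with ordinary quasi-isomorphisms.
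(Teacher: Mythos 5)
Your skeleton — the formal Quillen adjunction, the reduction via Corollary~\ref{cor2:filtration} to coacyclic and one-dimensional comodules, the trivial disposal of the coacyclic case — agrees with the paper's proof (modulo the omitted but harmless reduction to algebraically closed $\ground$). The genuine gap is exactly where you locate it: the one-dimensional case. Your proposed route is an induction along a flag of Lie subcoalgebras $\h\subseteq\g$ with one-dimensional quotients, powered by a ``relative PBW'' decomposition of $\Uc(\g)$ as $\Uc(\h)\otimes\Uc(\g/\h)$ compatible with the twisted differentials. This is not a step you can take for granted. First, the would-be left $\Uc(\h)$-comodule structure on $\Uc(\g)$ is itself problematic: $\h\hookrightarrow\g$ dualizes to a \emph{surjection} $\check{\U}(\g^*)\to\check{\U}(\h^*)$, so $\Uc(\g)$ does not naturally corestrict to a $\Uc(\h)$-comodule without choices. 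Second, and more seriously, the paper explicitly warns (end of Section~3) that the PBW filtration on $\Uc(\g)$ is topologically exhaustive but \emph{not} exhaustive and not complete — $\U(\g^*)\neq\check{\U}(\g^*)$ — which is precisely why it declares the PBW theorem ``of limited utility for homological considerations'' and never uses it. You acknowledge this as ``the main obstacle'' but do not resolve it, so the crux of the theorem remains unproven in your write-up.

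The paper sidesteps PBW entirely by a different mechanism. It first converts the unit condition into a statement about explicit complexes: using the standard cobar--bar resolution, $[M,L]_*$ is identified with the Hochschild cohomology of the coalgebra $\Uc(\g)$ with coefficients in $\Hom(M,L)$, which Proposition~\ref{prop:onetwosided} (the Hopf-algebra one-sided/two-sided comparison) converts to the cohomology of the cobar construction $\Omega\bigl([\Hom(M,L)]^{\ad}\bigr)$; the unit is then a weak equivalence iff the canonical map $\Omega(N)\to\CE(\g,N)$ is a quasi-isomorphism for every locally finite $N$. After the same reduction to one-dimensional $N$, the paper filters not $\Uc(\g)$ but the \emph{complexes} $\Omega(N)$ and $\CE(\g,N)$, via the lower central series filtration of the conilpotent $\g$: the key observation is that for one-dimensional $N$ the twisting MC element lies in the lowest filtration level (commutators of $\g^*$ act trivially on $N$), so the filtration is preserved, is exhaustive on these tensor/symmetric-algebra-type complexes, and the associated graded complexes are those of the abelian Lie coalgebra $\operatorname{gr}\g$ — whence Proposition~\ref{prop:Koszulresolution} finishes the proof. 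If you want to salvage your base-change strategy, you would need to replace the relative PBW decomposition with some filtration argument of this kind on the complexes themselves; as written, the step on which everything rests would fail for exactly the reason the paper's PBW discussion predicts.
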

\begin{proof}
	As in the proof of Proposition~\ref{prop:Koszulresolution}, we may assume, without loss of generality, that $\ground$ is algebraically closed.
	It is immediate that the left adjoint functor $\CE(\g,-)$ respects cofibrations while the right adjoint $B(-)$ respects fibrations, so the pair $(B(-),\CE(\g,-))$ is indeed a Quillen adjunction.  Let $L$ be a locally finite $\g$-comodule, or, equivalently, a $\Uc(\g)$-comodule and consider the natural map $L\to B\CE(\g,L)$. To prove that this is a weak equivalence of $\Uc(\g)$-comodules it suffices to show that for any finite dimensional $\Uc(\g)$-comodule $M$, it induces a bijection $[M,L]_*\to [M,B\CE(\g,L)]_*$ where $[-,-]_*$ stands for the graded vector space of maps in the homotopy category of $\Uc(\g)$-comodules. The latter space of maps is computed by resolving the second argument by a fibrant comodule. Since $B\CE(\g,L)$ is already fibrant, we have an isomorphism
	 $[M,B\CE(\g,L)]_*\cong \CE(\g, \Hom(M,L))$. On the other hand, taking the standard cobar-bar resolution of the $\Uc(\g)$-comodule $L$, we obtain that $[M,L]_*$ is computed by the complex whose underlying graded vector space is $\Omega(\Uc(\g))\otimes \Hom(M,L)$ and a twisted differential making it the standard complex computing $\operatorname{HH}(\Uc(\g),\Hom(M,L))$, the Hochschild cohomology of the coalgebra $\Uc(\g)$ with coefficients in the bicomodule $\Hom(M,L)$. This Hochschild cohomology computes $\RHom_{\Uc(\g)\otimes (\Uc(\g))^{\op}}(\Hom(M,L),\Uc(\g))$  and therefore, by Proposition \ref{prop:onetwosided}, $\RHom_{\Uc(\g)}([\Hom(M,L)]^{\ad},\ground)$. The latter, in turn is isomorphic to $$\Omega(\Hom(M,L)):=[\Omega(\Uc(\g))\otimes [\Hom(M,L)]^{\ad}]^{[\xi]},$$ the cobar-construction of the $\Uc(\g)$-comodule $[\Hom(M,L)]^{\ad}$.
	
	There is a map of algebras $\Omega(\Uc(\g))\to\CE(\g)$ given by the canonical MC element in $\Hom(\Uc(\g),\CE(\g))$ and this determines a map $\Omega(N):=[\Omega(\Uc(\g))\otimes N]^{\xi}\to\CE(\g, N)$ for any  locally finite $\g$-comodule $N$. We claim that this latter map is a quasi-isomorphism; taking for $N$ the $\g$-comodule $[\Hom(M,L)]^{\ad}$ will give the desired statement.
	
	To prove that, recall from Corollary \ref{cor2:filtration} that $N$ possesses a filtration whose subquotients are one-dimensional $\g$-modules or two-dimensional coacyclic ones. The filtration on $N$ induces filtrations on $\Omega(N)$ and on $\CE(\g, N)$; the map $\Omega(N)\to\CE(\g, N)$ respects this filtration and if it is a quasi-isomorphism on the associated subquotients, it will be a quasi-isomorphism overall. This reduces the desired statement to the case when $N$ is 1-dimensional or coacyclic. In the latter case both $\Omega(N)$ and $\CE(\g,N)$ have zero cohomology so we are left with the one-dimensional module $N$.
	
	To prove that $\Omega(N)\to\CE(\g, N)$ is a quasi-isomorphism for a 1-dimensional $N$, note that $\Omega(N)$ has the same underlying graded vector space as $\Omega(\Uc(\g))$ but the differential has the form $d_{\Omega}+\xi_{N}$ where $\xi_N$ is the MC element in $\Omega(\Uc(\g))$  corresponding to the $\g$-comodule $N$. Similarly, the differential on $\CE(\g,N)$ has the form $d_{\Omega}+\xi_N$ where we denote (slightly abusing the notation) the MC element in $\CE(\g,N)$ corresponding to the $\g$-comodule $N$ by the same symbol $\xi_N$.
	
	Now consider the canonical increasing filtration $0\subset\g_0\subset \g_1\subset\ldots$ on the conilpotent Lie coalgebra $\g$  whose dual is the lower series filtration on  $\g^*$: $\g^*\supset [\g^*,\g^*]\supset [[\g^*,\g^*],\g^*]\supset\ldots$. In particular, the lowest filtration component $\g_0$ consists of those elements that vanish on all commutators of $\g^*$. Note that the associated graded Lie coalgebra is abelian. This induces an exhaustive filtration on $\CE(\g)$ and on $\Omega(\Uc(\g))$.
	
	Since $N$ is one-dimensional, all commutators of $\g^*$ vanish on it; this translates into $\xi$ being in the lowest filtration component of $\CE(\g)$ and similarly for $\Omega(N)$. Thus, the total differential on $\CE(\g,N)$ and on $\Omega(N)$  preserve the given filtrations.  Next, $d_{\CE}$ vanishes on the associated graded module to $\CE(\g,N)$ and so, $\operatorname{gr}\CE(\g,N)$ is the Chevalley-Eilenberg complex of the comodule $\operatorname{gr}N  = N$ over the Lie coalgebra $\operatorname{gr} \g$, which is an abelian Lie coalgebra. Similarly, $\operatorname{gr}\Omega (N)$ is the cobar construction of $N$ as a $\operatorname{gr} \g$-comodule.
	Thus, showing that $\operatorname{gr}\CE(\g,N)\to\operatorname{gr}\Omega(N)$ is a quasi-isomorphism (which implies the desired statement) is equivalent to the map $\CE(\g,N)\to\Omega(N)$ being a quasi-isomorphism under the assumption that $\g$ is abelian.
	
	To prove this last statement, it suffices to observe that $\CE(\g, N)$ computes $[\ground,N]_*$ with the $\Uc(\g)$-comodule $N$ being replaced by its Koszul complex which, by Proposition \ref{prop:Koszulresolution}, is a fibrant replacement of $N$, whereas $\Omega(N)$ computes the same with the standard cobar-bar resolution of $N$. This finishes the proof.
\end{proof}

An immediate consequence of Theorem \ref{thm:coreflective} is an interpretation of the Chevalley-Eilenberg complex $\CE(\g,M)$ for a Lie coalgebra $\g$ and a $\g$-comodule $M$ as a derived functor in the abelian category of locally finite $\g$-comodules.
\begin{cor}\label{cor:derived}
	Let $\g$ be a conilpotent Lie coalgebra over a  field $\ground$ and $M$ be a locally finite $\g$-comodule. Then $\CE(\g,M)$ computes $\RHom(\ground, M)$ in the derived category of locally finite $\g$-comodules.
\end{cor}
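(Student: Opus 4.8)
The plan is to deduce this from Theorem \ref{thm:coreflective}, which establishes the Quillen adjunction $(B(-),\CE(\g,-))$ between $\Uc(\g)\CComod$ and $\CE(\g)\MMod$ together with the fact that $B\circ\CE(\g,-)$ is naturally isomorphic to the identity on the homotopy category of $\Uc(\g)$-comodules. First I would recall that, by the standard definition, $\RHom(\ground,M)$ in the derived (= homotopy) category of locally finite $\g$-comodules is computed by $\Hom_{\Uc(\g)\CComod}(\ground,M^{\mathrm{fib}})$, where $M^{\mathrm{fib}}$ is a fibrant replacement of $M$; equivalently it represents the graded vector space $[\ground,M]_*$ of morphisms in the homotopy category.

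Next I would invoke Proposition \ref{prop:Koszulresolution} — more precisely its consequence recorded in the remark following it — which says that the bar construction of any $\CE(\g,M)$-module is fibrant and that the unit $M\to B\CE(\g,M)$ is a weak equivalence of $\Uc(\g)$-comodules. Thus $B\CE(\g,M)$ is an explicit fibrant replacement of $M$. Wait — Proposition \ref{prop:Koszulresolution} as stated is only for abelian $\g$; however the relevant fact for general conilpotent $\g$ is exactly the statement of Theorem \ref{thm:coreflective}, that $L\to B\CE(\g,L)$ is a weak equivalence and $B\CE(\g,L)$ is fibrant. So I would use Theorem \ref{thm:coreflective} directly: $B\CE(\g,M)$ is a fibrant replacement of $M$.

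Then the computation is: $\RHom(\ground,M)\simeq \Hom_{\Uc(\g)\CComod}(\ground, B\CE(\g,M))$. Now I apply the adjunction isomorphism $\Hom_{\Uc(\g)\CComod}(\ground, B(N))\cong \Hom_{\CE(\g)\MMod}(\CE(\g,\ground),N)=\Hom_{\CE(\g)\MMod}(\CE(\g),N)$, valid for $N=\CE(\g,M)$, which identifies the above with $\Hom_{\CE(\g)\MMod}(\CE(\g),\CE(\g,M))\cong\CE(\g,M)$ since $\CE(\g)$ is the unit $\CE(\g)$-module. One should check that $\ground$ is cofibrant as a $\Uc(\g)$-comodule (so that $\Hom$ from it into a fibrant object already computes the derived Hom, with no need to further resolve the source); the empty/trivial comodule $\ground$ is indeed cofibrant, and moreover $B\CE(\g,M)$ being fibrant is what was just established. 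The main (minor) obstacle is bookkeeping: making sure the adjunction isomorphism at the chain level is compatible with the model structures so that it descends to an isomorphism of the derived objects, i.e. that one is genuinely computing $\RHom$ and not just $\Hom$ into a fibrant replacement that happens to have the wrong homotopy type — but this is precisely guaranteed by the Quillen adjunction part of Theorem \ref{thm:coreflective} together with the weak equivalence $M\to B\CE(\g,M)$. Finally, I would remark that combined with the identification of $[\ground,M]_*$ with Chevalley–Eilenberg cohomology this gives the promised interpretation of $H^*\CE(\g,M)$ as $\operatorname{Ext}^*(\ground,M)$ in the abelian category of locally finite $\g$-comodules.
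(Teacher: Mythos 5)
Your argument is correct, and it reaches the conclusion by a genuinely different (and arguably cleaner) mechanism than the paper's. The paper's proof takes the standard cobar--bar resolution of $M$ as a $\Uc(\g)$-comodule, so that $\RHom(\ground,M)$ is computed by the cobar construction $\Omega(M)$, and then invokes the quasi-isomorphism $\Omega(M)\to\CE(\g,M)$ that was established \emph{inside} the proof of Theorem \ref{thm:coreflective} (via the filtration reducing to abelian $\g$ and Proposition \ref{prop:Koszulresolution}). You instead treat Theorem \ref{thm:coreflective} as a black box: $B\CE(\g,M)$ is a fibrant replacement of $M$, every comodule is cofibrant since cofibrations are the injections, and the adjunction isomorphism $\Hom_{\Uc(\g)\CComod}(\ground,B(L))\cong\Hom_{\CE(\g)\MMod}(\CE(\g),L)\cong L$ applied to $L=\CE(\g,M)$ does the rest. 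Your route has the advantage of relying only on the \emph{statement} of the theorem rather than on an intermediate step of its proof, at the mild cost of having to observe that hom-complexes from a cofibrant object into a fibrant object compute $\RHom$ in these dg model categories --- a fact the paper itself uses freely in the proof of Theorem \ref{thm:coreflective}, so nothing is lost. Your self-correction about Proposition \ref{prop:Koszulresolution} applying only to abelian $\g$ is exactly right, and your closing remark identifying $H^*\CE(\g,M)$ with $\operatorname{Ext}^*(\ground,M)$ in the homotopy category of locally finite comodules is the intended reading of the corollary.
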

\begin{proof}
	Indeed, $\RHom(\ground, M)$ is computed by $\Omega(M)$ but it was shown in the course of the proof of Theorem \ref{thm:coreflective} that $\Omega(M)$ and $\CE(\g,M)$ are quasi-isomorphic.
\end{proof}
\begin{rem} Theorem \ref{thm:coreflective} exhibits the category  $H^0(\g\CComod)$ as a coreflective subcategory of $H^0(\CE(\g)\MMod)$ via the functor $\CE(\g,-)$; in particular this implies that this functor is fully faithful on the level of homotopy categories.
	A natural question is whether $\CE(\g,-)$ is also essentially surjective up to homotopy, i.e. whether it is part of a Quillen equivalence between $\Uc(\g)\CComod$ and $\CE(\g)\MMod$. The following example shows that this is not true, in general.
\end{rem}

\begin{example}\label{ex:deRham} Let us assume that $\ground$ is algebraically closed.
	Let $\g_1$ be the two-dimensional  Lie coalgebra of Proposition \ref{prop:coderivedabelian}. Then, the algebra $\CE(\g)$ is isomorphic to the ordinary polynomial de Rham algebra in one variable $\CE(\g)\cong \ground[z,dz]$
	where $|z|=0$. This algebra is quasi-isomorphic to $\ground$ and so, its ordinary derived category (and thus, the conilpotent coderived category of $\g_1$) is equivalent to the category of graded $\ground$-vector spaces.
	On the other hand, the coderived category of $\ground[z,dz]$ is not trivial. Indeed, any polynomial $1$-form in $\ground[z,dz]$ is an
	MC-element, and it is easy to check that for two different such MC-elements $\omega_1$ and $\omega_2$ the twisted modules $\ground[z,dz]^{[\omega_1]}$ and $\ground[z,dz]^{[\omega_2]}$ have no
	nonzero maps between them (this follows from the fact that the differential equation $f\omega_1-\omega_2 f=df$ does not have polynomial solutions).
	
	Next,  Proposition \ref{prop:coderivedabelian} gives us that the coderived category of $\Uc(\g_1)$ is equivalent to that of an infinite sum of copies of $\ground$. The indecomposable 1-dimensional $\g_1$-comodule $\ground=\ground_{\alpha}, \alpha\in\ground$ corresponds to an MC element $\alpha dz\in \ground[z,dz]$ and thus, to a corresponding twisted $\ground[z,dz]$-module but clearly there are many inequivalent others (which can be viewed as $L_\infty$  $\g$-comodules). So, the cobar construction exhibits the coderived category $\Uc(\g_1)$ as a coreflective subcategory of the compactly generated coderived category of $\CE(\g_1)$, but it is not equivalent to it.
\end{example}
This example shows that, in order to obtain a Quillen equivalence between $\Uc(\g)\CComod$ and $\CE(\g)\MMod$, further restrictions need to be imposed on $\g$. A natural such restriction is for $\g$ to be non-negatively graded; note that the two-dimensional  Lie coalgebra of Example \ref{ex:deRham} is clearly not of this type. The class of non-negatively graded  Lie coalgebras includes ordinary Lie coalgebras as well as Lie coalgebras associated with rational nilpotent topological spaces.
\subsubsection{Comodules over non-negatively graded  Lie coalgebras}
We will now prove that the functor $\CE(\g,-)$ is essentially surjective (on the homotopy category level) in the case when the conilpotent Lie coalgebra $\g$ is non-negatively graded. To establish this,  we need to recall certain results on the Koszul duality (or triality) between modules over Lie algebras and co/contra-modules over their Chevalley-Eilenberg coalgebras.

Let $G$ be a  Lie algebra. Then its category of modules $G\MMod$ is equivalent to the category of $\U(G)$-modules and as such, carries a model category structure with weak equivalences being quasi-isomorphisms and fibrations being surjective maps. The Chevalley-Eilenberg coalgebra of $G$ is $\CE(G):=\Scon\Sigma G$, the cofree conilpotent cocommutative coalgebra on $\Sigma G$ with the differential induced by the internal differential of $G$ and the Lie bracket on $G$. Given a $G$-module $M$, the (cohomological) Chevalley-Eilenberg complex of $G$ with coefficients in $M$ has the form $[\hat{S}\Sigma^{-1}G^*\otimes M]^{\xi}$, the MC twisting of the $\hat{S}\Sigma^{-1}G^*$-module $\hat{S}\Sigma^{-1}G^*\otimes M$ similar to the one used in the definition of the Chevalley-Eilenberg complex of a Lie coalgebra. In fact, in the case when $G$ is finite-dimensional, $G^*$ is a Lie coalgebra, and the action map $G\otimes M\to M$ gives, by the tensor-hom adjunction a map
$M\to \Hom(G,M)\cong G^*\otimes M$ giving $M$ the structure of a $G^*$-comodule. Then we have an isomorphism $\CE(G,M)\cong \CE(G^*,M)$.

An important subtlety in this framework is that the cohomological Chevalley-Eilenberg complex $\CE(G,M)$ is a \emph{contramodule} over the coalgebra $\CE(G)$ rather than a comodule. In fact, there is also the corresponding $\CE(G)$-comodule, the \emph{homological} Chevalley-Eilenberg complex of $G$ having the form $\CE_*(G, M)\cong \Scon\Sigma G\otimes M$ with a suitable twisted differential. The $\CE(G)$-comodule $\CE(G, M)$ and the $\CE(G)$-contramodule $\CE_*(G, M)$ are related by the comodule-contramodule correspondence and we refer to \cite[Section 2,5]{Posi} for the definition of contramodules and the comodule-contramodule correspondence.

The category $C\CCtrmod$ of contramodules over any coalgebra $C$ admits a model structure, cf. \cite[Section 8.2]{Posi}, and the corresponding homotopy category is called the \emph{contraderived} category of $C$. The following result holds.

\begin{prop} Let $G$ be an arbitrary Lie algebra. The functor $\CE(G,-)$ is a left Quillen functor that is part of a Quillen equivalence between $G\MMod$ and $\CE(G)\CCtrmod$. Furthermore, the functor $\RHom_{\CE(G)}(\CE(G),-)$ is a left Quillen functor that is part of a Quillen equivalence between $\CE(G)\CComod$ and $\CE(G)\CCtrmod$. 
	\end{prop}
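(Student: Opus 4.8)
The plan is to deduce both halves of the proposition from the general Koszul-duality and comodule--contramodule machinery of \cite{Posi}, the only genuinely Lie-theoretic ingredient being the Poincar\'e--Birkhoff--Witt theorem for $\U(G)$. The structure map linking the two sides is the canonical twisting cochain $\tau\colon\CE(G)\to\U(G)$, obtained by composing the coalgebra projection $\CE(G)=\Scon\Sigma G\twoheadrightarrow\Sigma G$, the desuspension $\Sigma G\to G$, and the inclusion $G\hookrightarrow\U(G)$; it satisfies the Maurer--Cartan equation, and the associated functor $\U(G)\MMod\to\CE(G)\CCtrmod$ is precisely $\CE(G,-)$, whose value on a module $M$ has as underlying graded contramodule the free $\CE(G)$-contramodule $\Hom_\ground(\CE(G),M)$, equipped with a $\tau$-twisted differential.

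For the first assertion I would apply Positselski's Koszul-duality theorem \cite[Section 6]{Posi} to the pair $(\U(G),\CE(G))$. Since $\CE(G)=\Scon\Sigma G$ is conilpotent, that theorem compares the \emph{ordinary} derived category of $\U(G)$-modules --- the homotopy category of the model structure used here on $G\MMod$ --- with the contraderived category of $\CE(G)$-contramodules, and it exhibits the adjunction with left adjoint $\CE(G,-)$ as a Quillen equivalence, subject to the single hypothesis that $\tau$ be \emph{acyclic}: that the two-sided Koszul complex $\U(G)\otimes\CE(G)\otimes\U(G)$ with $\tau$-twisted differential resolves $\U(G)$. To verify acyclicity I would run the standard comparison with the abelian case: filtering this complex by a combination of the PBW filtration of $\U(G)$ and the weight grading of $\CE(G)$ produces an exhaustive, bounded-below filtration whose associated graded is the Koszul complex of the abelian dg Lie algebra underlying $G$ (with its internal differential), and the latter is classically acyclic, being a tensor product of elementary ``$\operatorname{S}\otimes\Lambda$'' resolutions; acyclicity of the total complex then follows by the standard spectral-sequence argument. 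Left Quillenness of $\CE(G,-)$ is part of the cited theorem.

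The second assertion is Positselski's comodule--contramodule correspondence \cite[Sections 5 and 8]{Posi} applied to the dg coalgebra $C=\CE(G)$ --- a result that needs no conilpotency or finiteness hypothesis on $C$. It furnishes a Quillen equivalence between the coderived model structure on $C\CComod$ and the contraderived model structure on $C\CCtrmod$, the comodule-to-contramodule direction of which is realized by the derived functor denoted $\RHom_{\CE(G)}(\CE(G),-)$; here the model structure on $\CE(G)\CComod$ is the coderived one recalled in Section~\ref{sec:2}. As a coherence check, composing the two equivalences recovers the module-level Lie Koszul duality between $G\MMod$ and $\CE(G)\CComod$ sketched in \cite[Example 6.6]{Posi}.

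The main obstacle --- indeed the only step that is not pure bookkeeping --- is the acyclicity of $\tau$ for a general, possibly unbounded and infinite-dimensional, dg Lie algebra $G$: one must check that the PBW filtration on $\U(G)$ and the weight filtration on $\CE(G)$ really do combine into an exhaustive, bounded-below filtration in this generality, and that the characteristic-zero PBW theorem applies degreewise on the associated graded. The remaining verifications --- that $\CE(G,-)$ preserves cofibrations and trivial cofibrations, that the abstract adjoint functors match those in \cite{Posi}, and the translation between the conventions for $\CE(G)$-contramodules used here and there --- are routine unwindings of definitions on which I would not dwell.
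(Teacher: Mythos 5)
Your proposal is correct and follows essentially the same route as the paper: both assertions are reduced to Positselski's machinery, the first to the Lie-algebra Koszul duality of \cite[Example 6.6]{Posi} (whose key input is exactly the acyclicity of the twisting cochain $\CE(G)\to\U(G)$, which you verify via the PBW filtration argument that the paper leaves implicit in ``extends verbatim to the dg case''), and the second to the general comodule--contramodule correspondence of \cite[Section 8.4]{Posi}.
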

\begin{proof}
	The first claim is proved in \cite[Example 6.6]{Posi} in the special case when $G$ is an ordinary (non-dg) Lie algebra and the arguments extend verbatim to the dg case. The second claim is a general result on the correspondence between comodules and contramodules and is proved in \cite[Section 8.4]{Posi}.
\end{proof}	
 The full embedding of the homotopy category of $\Uc(\g)\CComod$ into that of $\CE(\g)$-$\Mod$ established in
Proposition~\ref{thm:coreflective},
can be improved to an equivalence, under a natural grading restriction.
\begin{theorem}\label{thm:nonneg}
	Let $\g$ be a non-negatively graded conilpotent Lie coalgebra.	The adjoint pair $B(-)$ and $\CE(\g,-)$ determines a Quillen equivalence between model categories $\Uc(\g)\CComod$ and $\CE(\g)\MMod$.
\end{theorem}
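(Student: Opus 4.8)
The plan is to upgrade the coreflective Quillen adjunction of Theorem~\ref{thm:coreflective} to a Quillen equivalence by establishing that $\CE(\g,-)$ is essentially surjective on homotopy categories. By Theorem~\ref{thm:coreflective} the pair $(\CE(\g,-),B(-))$ is a Quillen adjunction whose derived unit $L\to B\CE(\g,L)$ is an isomorphism, so $\CE(\g,-)$ is already fully faithful on homotopy categories and it remains to show that its essential image is all of $\mathrm{Ho}(\CE(\g)\MMod)$. First, for an honest finite-dimensional $\g$-comodule $W$ the derived counit at $\CE(\g,W)$ is a weak equivalence: applying the left Quillen functor $\CE(\g,-)$ to the weak equivalence of cofibrant comodules $W\to B\CE(\g,W)$ gives a weak equivalence $\CE(\g,W)\to\CE(\g,B\CE(\g,W))$ which, by a triangle identity, is a section of the counit, so the counit is a weak equivalence by two-out-of-three. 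Hence the essential image of $\CE(\g,-)$ contains every $\CE(\g,W)$; since $\CE(\g,-)$ preserves coproducts and exact triangles and $B(-)$ preserves coproducts (its underlying space is $N\otimes\Uc(\g)$), this essential image is a localizing subcategory of $\mathrm{Ho}(\CE(\g)\MMod)$, which is compactly generated by $\PerfII(\CE(\g))$; so it suffices to prove that every finitely generated twisted $\CE(\g)$-module lies in the localizing subcategory generated by the $\CE(\g,W)$. One may also first reduce to finite-dimensional $\g$ by writing $\g$ as a filtered colimit of finite-dimensional non-negatively graded conilpotent Lie subcoalgebras; I then set $G:=\g^{*}$, a finite-dimensional non-positively graded nilpotent Lie algebra.

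Now the grading hypothesis enters. Since $\Sigma^{-1}\g$ is concentrated in cohomological degrees $\ge1$, the algebra $\CE(\g)=S\Sigma^{-1}\g$ is connected and non-negatively graded, and its word-length grading $\CE(\g)=\bigoplus_{k\ge0}S^{k}\Sigma^{-1}\g$ refines the cohomological one, with $S^{k}\Sigma^{-1}\g$ in degrees $\ge k$. Thus a Maurer--Cartan twist on a finitely generated free module $\CE(\g)\otimes V$ ($\dim V<\infty$) has only finitely many word-length components, and the word-length filtration $\{\mathfrak{m}^{p}(\CE(\g)\otimes V)\}$ has associated graded the modules $S^{p}\Sigma^{-1}\g\otimes V$, each a finite direct sum of shifts of the residue-field module and of acyclic complexes; the residue-field module itself has a finite Koszul resolution by free modules (here $\g$ is finite-dimensional) and so lies in the localizing subcategory generated by $\CE(\g)=\CE(\g,\ground)$. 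It therefore remains to show that a finitely generated twisted module is recovered, in the coderived category, from this a priori infinite, decreasing word-length filtration. This is where I would invoke the completion: because $\CE(\g)$ is connected and non-negatively graded the natural map $\CE(\g)\to\hat S\Sigma^{-1}\g$ should be a weak equivalence of the second kind on module categories (the two algebras have the same finitely generated twisted modules), so one may replace $\CE(\g)$ by the complete algebra $\hat S\Sigma^{-1}\g$, which is the continuous dual of $\CE(G)=\Scon\Sigma G$; feeding this into the Koszul triality recalled above, $G\MMod\simeq\CE(G)\CCtrmod\simeq\CE(G)\CComod$, translates the question into a statement about $G$-modules where the word-length filtration becomes the (now convergent, because the twist is topologically nilpotent) filtration by a nilpotent ideal, and the $\CE(\g,W)$ correspond to the finite-dimensional $G$-modules, which generate. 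Tracing the canonical Maurer--Cartan element $\xi$ through the equivalences identifies the composite with the derived functor of $\CE(\g,-)$. Alternatively, one can bypass the completion and argue directly that the algebra map $\Omega(\Uc(\g))\to\CE(\g)$ of the proof of Theorem~\ref{thm:coreflective} is a second-kind weak equivalence on modules when $\g$ is non-negatively graded, then combine this with second-kind Koszul duality between $\Uc(\g)\CComod$ and $\Omega(\Uc(\g))\MMod$.

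The main obstacle is precisely this last step --- that non-negative grading forces finitely generated twisted $\CE(\g)$-modules to be built, in the coderived category, from the $\CE(\g,W)$ with \emph{linear} twist; equivalently, that the Maurer--Cartan twists one has to deal with are topologically nilpotent, so that the word-length filtration converges and the completion map $\CE(\g)\to\hat S\Sigma^{-1}\g$ (or the map $\Omega(\Uc(\g))\to\CE(\g)$) is a second-kind equivalence on modules. This is exactly the hypothesis that cannot be relaxed: Example~\ref{ex:deRham} shows that the degree-zero generator $z\in\CE(\g_{1})=\ground[z,dz]$ produces finitely generated twisted modules such as $\ground[z,dz]^{[z\,dz]}$ which do not lie in the localizing subcategory generated by the modules $\ground[z,dz]^{[\alpha\,dz]}$ attached to the one-dimensional $\g_{1}$-comodules, and for which the counit is not a weak equivalence. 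Secondary, more routine, points are the reduction to finite-dimensional $\g$, the compatibility of the compact-generation argument with the second-kind model structures, and verifying that the transfer functors used preserve the second-kind derived structure rather than merely the ordinary one.
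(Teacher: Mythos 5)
Your overall strategy --- reduce to essential surjectivity on finitely generated twisted $\CE(\g)$-modules, reduce to $\dim\g<\infty$, and pass through the comodule--contramodule correspondence and the Koszul triality $\g^*\MMod\simeq\CE(\g^*)\CCtrmod\simeq\CE(\g^*)\CComod$ to a question about modules over $G:=\g^*$ --- is the paper's. But the decisive step is missing. You assert that on the $G$-module side ``the $\CE(\g,W)$ correspond to the finite-dimensional $G$-modules, which generate.'' Finite-dimensional modules do \emph{not} generate the derived category of $\U(G)$ as a localizing subcategory: already for $G$ one-dimensional abelian in degree $0$ (non-positively graded and nilpotent), $\U(G)=\ground[y]$, and the localizing subcategory generated by the finite-dimensional modules consists of complexes with cohomology supported at closed points, so it does not contain $\ground[y]$ itself. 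What the paper actually proves is that the particular $G$-module $K$ corresponding to a finitely generated twisted $\CE(\g)$-module has \emph{finite-dimensional total cohomology} (because the associated $\CE_*(\g)$-comodule is cofree on a finite-dimensional space, so $\RHom$ out of any compact object is finite-dimensional), and then --- this is exactly where the grading hypothesis enters --- since $\U(\g^*)$ is non-positively graded, $K$ admits a Postnikov tower placing it in the \emph{thick} subcategory generated by its cohomology modules, which are finite-dimensional and concentrated in single degrees. Your proposal establishes no finiteness of $K$ and never invokes a Postnikov tower; your substitute justification (``the word-length filtration becomes the \dots filtration by a nilpotent ideal'') is also inaccurate, since the augmentation ideal of $\U(G)$ is not nilpotent, and in any case convergence of a decreasing filtration does not place an object in the localizing subcategory generated by its subquotients --- that is precisely the failure illustrated by Example \ref{ex:deRham}, as you yourself note for the word-length filtration on $\CE(\g)\otimes V$.

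Two smaller inaccuracies: the residue field $\ground$ does not have a finite free resolution over $\CE(\g)$ whenever $\g$ has elements in even degree (e.g.\ $\g$ one-dimensional in degree $0$ gives $\CE(\g)=\Lambda(x)$ with $|x|=1$, over which $\ground$ has an infinite periodic resolution), and membership of $\ground$ in the localizing subcategory generated by the free module is itself a nontrivial claim in the second-kind setting. On the positive side, the reduction to finite-dimensional $\g$ and the identification $\CE(\g)\cong[\CE(\g^*)]^*$ (so that the completion map you worry about is an isomorphism in each degree, $\CE(\g)$ being degreewise finite-dimensional) are fine and agree with the paper; your alternative route via $\Omega(\Uc(\g))\to\CE(\g)$ being a second-kind equivalence is essentially a restatement of what must be proved and would require the same finiteness-plus-Postnikov input.
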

\begin{proof}
	We may assume, without loss of generality, that $\ground$ is algebraically closed.
	It suffices to prove that
	 $\CE(\g,-)$ is essentially surjective.
	Since the homotopy category of $\CE(\g)$-modules is generated by finitely generated twisted $\CE(\g)$-modules, it further suffices to show that any such is in the image of $\CE(\g,-)$. Let us assume, first of all, that $\dim \g<\infty$. A twisted $\CE(\g)$-module has the form $M=(\CE(\g)\otimes V)^{[\xi]}$ where $\xi$ is an MC element in $\CE(\g)\otimes \End(V)$ and $\dim V<\infty$. Note that, since $\g$ is non-negatively graded and finite-dimensional, $\CE(\g)$ is finite-dimensional in every graded component and $\CE(\g)\cong \CE(\g^*)$. So, $M$ can be viewed as a contramodule over the coalgebra $\CE_*(\g):=[\CE(\g^*)]^*$.
	
	The $\CE_*(\g)$-comodule corresponding to the contramodule $M$ via the comodule-contramodule correspondence, has the form $N:=\CE_*(\g)\otimes V$ (with a suitable twisted differential). Since $N$ is a cofree $\CE_*(\g)$-comodule and $\dim V<\infty$, it follows for any compact (i.e. finite-dimensional) $\CE_*(\g)$-comodule $L$, one has $\RHom_{\CE_*(\g)\CComod}(L,M)$ is finite-dimensional. Therefore, $M$ as an object in the contraderived category of $\CE_*(\g)$, also has this property as well as a $\g^*$-module $K$ corresponding to $M$ via the Koszul equivalence between $\g^*\MMod$ and $\operatorname{\CE_*(\g)-Ctrmod}$. Since $K$ is an object in the ordinary derived category of $\U(\g^*)$ and the latter is generated by a single compact object  $\U(\g^*)$, this property of the $\g^*$-module $K$ is equivalent to $H^*(K)$ being finite-dimensional. Next, since $\g$ is non-negatively graded, $\U(\g^*)$ is non-positively graded. Using the Postnikov tower of $K$ as a $\U(\g^*)$-module (which exists since $\U(\g^*)$ is non-positively graded), we see that $K$ belongs to the thick subcategory containing $\U(\g^*)$-modules that are finite-dimensional (and concentrated in a single degree). Every finite-dimensional $\U(\g^*)$-module is also a  $\U(\g)$-comodule. Thus, $M$ belongs to the thick subcategory generated by contramodules of the form $\CE(\g,L)$ where $L$ is finite-dimensional. By Theorem \ref{thm:coreflective} the functor $\CE(\g,-)$ is fully faithful  on the homotopy category level and it follows that, since each $\CE(\g,L)$ (by definition) is in the image of the functor $\CE(\g,-)$, so is $M$, as an object in the thick subcategory generated by $\CE(\g,L)$, for $L$ finite-dimensional. Thus, the desired claim is proved under the assumption $\dim\g<\infty$.
	
	Let now the conilpotent Lie coalgebra $\g$ be possibly infinite-dimensional. Then $\g=\bigcup_{\alpha}\g_\alpha$ where $\g_\alpha$ run through finite-dimensional Lie subcoalgebras of $\g$. Then $\CE(\g_\alpha)$ can be viewed as a subcomplex of $\CE(\g)$ and $\CE(\g)=\bigcup_\alpha\CE(\g_\alpha)$. Let $M$ be a twisted module over $\CE(\g)$ whose underlying space is $\CE(\g)\otimes V$ with $\dim V<\infty$. Because of finite-dimensionality of $V$, there exists $\h:=\g_\alpha$ such that the differential in $M$ restricts to $\CE(\h)\otimes V$. Denote by $N$ the corresponding twisted $\CE(\h)$-module whose underlying space is $\CE(\h)\otimes V$; the $\CE(\g)$-module $M$ is obtained from $N$ by tensoring up with $\CE(\g)$.
	
	Since $\dim\h<\infty$, there exists an $\h$-comodule $L$ such that $\CE(\h,L)$ is weakly equivalent to $N$. Since $\h$ is a Lie subalgebra of $\g$, $L$ can be viewed as a $\g$-module. We have:
	\[
	M\cong N\otimes_{\CE(\h)}\CE(\g)\simeq \CE(\h,L)\otimes_{\CE(\h)}\CE(\g)\cong \CE(\g,L)
	\]
	and we are done.
\end{proof}

\subsection{Conilpotent Koszul duality}
We will now discuss the coLie-commutative Koszul duality for \emph{conilpotent} modules over a Lie coalgebra. This version is closer to the ordinary
dg Koszul duality in that it does not involve model categories of modules of the second kind.

Let $\g$ be a Lie coalgebra and $\g\CComodcon$ be the category of conilpotent $\g$-modules. Recall from Proposition \ref{prop:conilpotentmodules} that $\g\CComodcon$ is equivalent to the category $\Ucon(\g)\CComod$ of comodules over the conilpotent universal enveloping coalgebra of $\g$. As such, it inherits the structure of a model category. Consider the algebra $\CE(\g)\otimes [\Ucon(\g)]^*$. Then it contains an MC element $\xi$, the image of the canonical MC element in $\g^*\otimes\CE(\g)$. We can then set $\CE(\g,M):=\CE(\g)\otimes M$, the Chevalley-Eilenberg complex of $\g$ with coefficients in $M$; note that this definition clearly agrees with that given in Section \ref{section:Koszul1}.

Similarly, given a $\CE(\g)$-module $N$, we consider $N^*\otimes [\Ucon(\g)]^*$. This is a
$\CE(\g)\hat{\otimes}[\Ucon(\g)]^*$-module and we can twist it by the element $\xi$.
The resulting pc $[\Ucon(\g)]^*$-module $(N^*\otimes [\Ucon(\g)]^*)^{[\xi]}$
corresponds to a $\g$-comodule with the underlying space $N\otimes \Ucon(\g)$ is called the conilpotent bar construction $\Bcon(N)$ of the $\CE(\g)$-module $N$.

The following result is an immediate corollary of definitions.
\begin{prop}\label{prop:coniladj}
	The functor $\Bcon:\CE(\g)\MMod\to \Ucon(\g)\CComod$ is right adjoint to
	$\CE(\g,-):\Ucon(\g)\CComod\to\CE(\g)\MMod$.
	The functor $\Bcon$ is (isomorphic to)  the functor associating to a $\CE(\g)$-module $N$ the maximal conilpotent submodule of $B(N)$.
\end{prop}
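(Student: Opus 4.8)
The plan is to establish the adjunction by unwinding both $\Hom$-spaces, exactly as in the proof of the bar--cobar adjunction of the previous subsection but with $\Ucon(\g)$ in place of $\Uc(\g)$, and then to identify $\Bcon$ with the maximal-conilpotent-subcomodule functor by inspecting underlying comodules. For the adjunction: $\CE(\g,M)=\CE(\g)\otimes M$ carries the free $\CE(\g)$-module structure with a differential twisted by $\xi$, so for any $\CE(\g)$-module $N$ the graded vector space underlying $\Hom_{\CE(\g)\MMod}(\CE(\g,M),N)$ is $\Hom(M,N)$, which is a $[\Ucon(\g)]^*\hat{\otimes}\CE(\g)$-module whose differential is precisely that of the $\xi$-twist of $\Hom(M,N)$. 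Dually, $\Bcon(N)=N\otimes\Ucon(\g)$ is the cofree $\Ucon(\g)$-comodule on $N$ with a differential twisted by (the image of) $\xi$, and $\Hom_{\Ucon(\g)\CComod}(M,\Bcon(N))$ is likewise $\Hom(M,N)$ with the same $\xi$-twisted differential; this identification is natural in $M$ and $N$. Since the canonical Maurer--Cartan element of $\CE(\g)\hat{\otimes}\g^*$ maps compatibly into $\CE(\g)\hat{\otimes}[\Ucon(\g)]^*$, both computations use the same $\xi$, and we obtain the natural isomorphism $\Hom_{\CE(\g)\MMod}(\CE(\g,M),N)\cong\Hom_{\Ucon(\g)\CComod}(M,\Bcon(N))$, i.e.\ the asserted adjunction.

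For the explicit description, recall that $B(N)$ is the $\Uc(\g)$-comodule whose underlying object is the cofree comodule $N\otimes\Uc(\g)$, with a $\xi$-twisted differential, while $\Bcon(N)$ has underlying object the cofree $\Ucon(\g)$-comodule $N\otimes\Ucon(\g)$. By definition $\Ucon(\g)=[\Uc(\g)]_{\mathrm{con}}$ is the maximal conilpotent subcoalgebra of $\Uc(\g)$; consequently, viewing $N\otimes\Ucon(\g)$ as a $\Uc(\g)$-comodule by corestriction exhibits it as the maximal conilpotent subcomodule of $N\otimes\Uc(\g)$. Indeed, by Proposition~\ref{prop:conilpotentmodules} a subcomodule $L\subseteq N\otimes\Uc(\g)$ is conilpotent if and only if its coaction factors through $\Ucon(\g)$; since the coaction on $N\otimes\Uc(\g)$ is the cofree one, this happens exactly when $L\subseteq N\otimes\Ucon(\g)$. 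Moreover the $\xi$-twisted differential of $B(N)$ restricts to the subcomodule $N\otimes\Ucon(\g)$ and there coincides with the differential of $\Bcon(N)$, because the pseudocompact algebra surjection $[\Uc(\g)]^*\to[\Ucon(\g)]^*$ dual to $\Ucon(\g)\hookrightarrow\Uc(\g)$ carries $\xi$ to $\xi$. Hence $\Bcon(N)$ is a conilpotent sub-dg-comodule of $B(N)$ containing every conilpotent subcomodule, i.e.\ $\Bcon(N)$ is the maximal conilpotent subcomodule of $B(N)$. (This is consistent with the first part: the class of conilpotent comodules is closed under quotients and directed sums, so the inclusion $\g\CComodcon\hookrightarrow\Uc(\g)\CComod$ is a left adjoint with the maximal-conilpotent-subcomodule functor as its right adjoint, and composing this with the ordinary bar--cobar adjunction recovers the adjunction just proved.)

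The genuinely routine part is the unwinding of the two $\Hom$-spaces, which is formally identical to the non-conilpotent case and requires only tracking where the internal differentials, the coaction, and $\xi$ go. The one point that uses input beyond formal manipulation is the identification of the maximal conilpotent subcomodule of the cofree comodule $N\otimes\Uc(\g)$ with $N\otimes\Ucon(\g)$; this rests on Proposition~\ref{prop:conilpotentmodules}, on the maximality of $\Ucon(\g)$ among conilpotent subcoalgebras of $\Uc(\g)$, and on the elementary observation that a conilpotent subcomodule of a cofree comodule must lie inside the cofree comodule on the conilpotent part of the coalgebra. I expect this to be the main (if modest) obstacle; everything else is bookkeeping.
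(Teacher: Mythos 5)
Your proposal is correct and follows the route the paper intends: the paper states the proposition as an immediate corollary of the definitions, with the adjunction obtained by unwinding both $\Hom$-spaces exactly as in the locally finite bar--cobar adjunction (replacing $\Uc(\g)$ by $\Ucon(\g)$ and using the image of the canonical MC element in $\CE(\g)\hat{\otimes}[\Ucon(\g)]^*$), and the identification of $\Bcon(N)$ inside $B(N)$ reflecting the paper's remark that the conilpotent adjunction is the nonconilpotent one restricted to conilpotent comodules. Your key supporting observation --- that the maximal conilpotent subcomodule of the cofree comodule $N\otimes\Uc(\g)$ is $N\otimes\Ucon(\g)$, and that the twisted differential restricts because $[\Uc(\g)]^*\to[\Ucon(\g)]^*$ is an algebra surjection carrying $\xi$ to $\xi$ --- is exactly the content the paper leaves implicit.
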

Informally, one can say that the conilpotent bar-cobar adjunction is obtained from the nonconilpotent one by restricting to conilpotent $\g$-comodules.

It is natural to ask whether the conilpotent Koszul duality constructed above can be promoted to a certain Quillen equivalence. For this we must (as before) impose some conditions on $\g$; indeed if $\g^*$ is a semisimple Lie algebra, then $\g$ has no conilpotent comodules and $\Ucon(\g)\cong \ground$. So we assume that $\g$ is itself conilpotent. Then we have the following result.

\begin{theorem}\label{thm:conilduality}
	The bar-cobar adjunction determines a Quillen equivalence between the category of conilpotent $\g$-comodules (equivalently $\Ucon(\g)$-comodules) and the category of $\CE(\g)$-modules where the latter has the standard projective model structure where weak equivalences are quasi-isomorphisms and fibrations are surjective maps.
\end{theorem}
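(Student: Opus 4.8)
The plan is to show that the Quillen adjunction $(\CE(\g,-),\Bcon)$ of Proposition~\ref{prop:coniladj} induces an adjoint equivalence of homotopy categories. As in the proofs above we may assume $\ground$ is algebraically closed, since scalar extension to $\overline{\ground}$ commutes with all functors involved and detects weak equivalences. That $(\CE(\g,-),\Bcon)$ is a Quillen adjunction is routine: $\CE(\g,-)=\CE(\g)\otimes(-)$ (with a twisted differential) is exact and carries a quasi-free $\Ucon(\g)$-comodule $M$ to the $\CE(\g)$-module $\CE(\g)\otimes M$, which is cofibrant (semifree with respect to the filtration induced by the coradical filtration of $M$ — this is where conilpotency of $M$ enters); it thus preserves cofibrations, and preserving quasi-isomorphisms of underlying complexes it preserves trivial cofibrations. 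Equivalently, $\Bcon(N)$ has underlying complex $N\otimes\Ucon(\g)$, so $\Bcon$ visibly preserves surjections and surjective quasi-isomorphisms, i.e. the fibrations and trivial fibrations of the projective structure on $\CE(\g)\MMod$. Moreover every conilpotent $\g$-comodule is cofibrant, so $\mathbb{L}\CE(\g,-)=\CE(\g,-)$, and its essential image on homotopy categories is closed under coproducts (as $\CE(\g,\bigoplus M_\alpha)=\bigoplus\CE(\g,M_\alpha)$), cones (by exactness) and shifts, hence is a localizing subcategory; since it contains $\CE(\g,\ground)=\CE(\g)$, a compact generator of the derived category of the algebra $\CE(\g)$, it is everything. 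So $\mathbb{L}\CE(\g,-)$ is essentially surjective, and the whole theorem reduces to showing that the unit $\eta_M\colon M\to\Bcon\CE(\g,M)$ is a weak equivalence of $\Ucon(\g)$-comodules for every conilpotent $\g$-comodule $M$; this makes $\mathbb{L}\CE(\g,-)$ fully faithful and completes the argument.

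To prove that $\eta_M$ is a weak equivalence I would follow the pattern of the proof of Theorem~\ref{thm:coreflective}. Weak equivalences are stable under filtered colimits and $M$ is the union of its finite-dimensional subcomodules, so we may take $\dim M<\infty$. The canonical filtration $0\subset\g_0\subset\g_1\subset\cdots$ on $\g$ dual to the lower central series of $\g^*$ has abelian associated graded and induces compatible, exhaustive filtrations on $M$, on $\CE(\g,M)$, and on $\Bcon\CE(\g,M)=\CE(\g,M)\otimes\Ucon(\g)$, with respect to which $\operatorname{gr}\eta_M$ is the corresponding unit over the abelian Lie coalgebra $\operatorname{gr}\g$; a comparison of the associated spectral sequences then reduces the statement to the case of an abelian (hence automatically conilpotent) Lie coalgebra. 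For such $\g$ one has $\Ucon(\g)\cong\Scon(\g)$ and $\CE(\g)\cong S\Sigma^{-1}\g$, and $\eta_M$ is the conilpotent Koszul resolution of $M$; this is the conilpotent analogue of Proposition~\ref{prop:Koszulresolution} and is proved in the same way — decomposing $\g$ into the one-dimensional summands and acyclic two-term summands of Proposition~\ref{prop:coderivedabelian}, handling each summand via the classical Koszul duality between a (co)symmetric (co)algebra and an exterior one (essentially Lemma~\ref{lem:partialresult}), reducing a general $M$ to a one-dimensional comodule by Corollary~\ref{cor2:filtration}, and recombining the tensor factors with Lemma~\ref{lem:tensorprod}. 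Equivalently, one can package both halves of the Koszul duality at once: the MC element $\xi$ yields an algebra map $\Omega\Ucon(\g)\to\CE(\g)$, which one shows (again by reduction to the abelian case, where it is classical Koszul duality for the symmetric algebra) to be a quasi-isomorphism, and then invokes the general conilpotent Koszul duality of \cite{Posi} identifying the homotopy category of $\Ucon(\g)\CComod$ with the derived category of $\Omega\Ucon(\g)\MMod$.

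The main obstacle I anticipate is precisely this reduction to the abelian case, for two reasons. First, all three grading types of Proposition~\ref{prop:coderivedabelian} must be handled uniformly; the delicate ones are the degree-$0$ and degree-$(-1)$ directions, where the relevant Koszul complex involves the polynomial de Rham algebra $\ground[z,dz]$ and acyclicity comes down to the differential-equation computation already carried out in Lemma~\ref{lem:partialresult}. Second, and more importantly, one must verify that the filtration induced on the conilpotent bar construction $\Bcon\CE(\g,M)$ is both exhaustive and complete, so that the comparison spectral sequence converges; this is the step where conilpotency of $\g$ is used in an essential way, and it must be, since Example~\ref{ex:sl2} shows that no such equivalence can hold for general $\g$.
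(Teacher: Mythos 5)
Your proposal is essentially correct, and in fact the route you mention only in passing (``equivalently, one can package both halves at once'') is precisely the paper's proof: the MC element gives an algebra map $\Omega(\Ucon(\g))\to\CE(\g)$, the functor $\CE(\g,-)$ factors as the Positselski equivalence $\Ucon(\g)\CComod\to\Omega(\Ucon(\g))\MMod$ followed by induction along that map, and everything reduces to showing the map is a quasi-isomorphism --- which is done by exactly the filtration-by-the-lower-central-series reduction to the abelian and then one-dimensional case that you describe. Your primary route (prove directly that the unit $\eta_M$ is a weak equivalence of comodules, then get essential surjectivity from the localizing-subcategory-containing-a-compact-generator argument) is a legitimate alternative, but it is strictly more work: establishing that a cofiber is \emph{coacyclic} is harder than establishing a quasi-isomorphism of dg algebras, and the paper's factorization outsources all of the comodule-side difficulty to the already-known conilpotent Koszul duality of \cite[Theorems 6.3, 8.4]{Posi}. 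Two smaller remarks. First, the logical order in your first paragraph should be reversed: the essential image of a triangulated functor is not automatically closed under cones, so you must prove full faithfulness (the unit statement) \emph{before} you can run the localizing-subcategory argument. Second, the obstacle you anticipate in the abelian case is largely illusory in the conilpotent setting: for one-dimensional even $\g$ one has $\Ucon(\g)\cong\Scon(\g)$, the \emph{conilpotent} polynomial coalgebra, which is the ordinary bar construction of an exterior algebra, so classical conilpotent Koszul duality applies and there is no special degree-zero case; the polynomial de Rham algebra $\ground[z,dz]$ and the differential-equation analysis of Lemma \ref{lem:partialresult} belong to the nonconilpotent story (Theorem \ref{thm:coreflective} and Example \ref{ex:deRham}) and do not enter here. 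Likewise Proposition \ref{prop:coderivedabelian} and Corollary \ref{cor2:filtration} concern locally finite comodules over $\Uc(\g)$ and are not needed for conilpotent ones, where nilpotency directly provides a filtration with trivial one-dimensional subquotients.
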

\begin{proof} We may assume that $\ground$ is algebraically closed; the general case follows.
Consider the cobar construction  of the coalgebra $\Ucon(\g)$; recall that it has the form $\Omega(\Ucon(\g))\cong T\Sigma^{-1}\Ucon(\g)$ with the differential induced by that on $\Ucon(\g)$ and the coproduct on $\Ucon(\g)$. If $A$ is an algebra, then an algebra map $\Omega(\Ucon(\g))\to A$ is equivalent to an MC element in the convolution algebra $\Hom(\Ucon(\g), A)\cong A\hat{\otimes} \hat{\U}(\g^*)$. Thus, the canonical MC element in $\CE(\g)\hat{\otimes} \hat{\U}(\g^*)$ determines an algebra map $\Omega(\Ucon(\g))\to \CE(\g)$. It is now clear that the functor $M\mapsto \CE(\g,M)$ from conilpotent $\g$-comodules to $\CE(\g)$-modules factors as
\[
\Ucon(\g)\CComod\to  \Omega(\Ucon(\g))\MMod\to \CE(\g)\MMod
\]
where the first functor above is the part of a Quillen equivalence between comodules over a (conilpotent) coalgebra and modules over its cobar construction (see \cite[Section 8.4]{Posi}) and the second is the induction functor coming from the algebra map $\Omega(\Ucon(\g))\to \CE(\g)$ described above. Both functors are left Quillen, and so, in order to prove that their composition is a Quillen equivalence, it suffices to show that $\Omega(\Ucon(\g))\to \CE(\g)$ is a quasi-isomorphism. It is at this point that the condition of conilpotence of $\g$ is used. Using the filtration on $\g$ whose associated graded Lie coalgebra is abelian, the question is reduced to the case when $\g$ itself is abelian, and then, as in the proof of Theorem \ref{thm:coreflective}, further to the case when $\g$ is one-dimensional. In the one-dimensional situation there are two cases: when $\g$ is odd (and then $\Ucon(\g)$ is the exterior coalgebra on one generator) and when $\g$ is even  (and then $\Ucon(\g)$ is the polynomial coalgebra on one generator). (Note that in the conilpotent situation there is no special case when $\g$ is concentrated in degree zero.)  The desired statement follows from the Koszul resolution as in the non-conilpotent case.  Finally, if the abelian Lie coalgebra $\g$ is not finite-dimensional, it is a filtered colimit of finite-dimensional ones and the claim follows from passing to filtered colimits.
\end{proof}	
	\section{Koszul duality II: commutative cofibrant algebras}\label{section:cofibrant} Given a conilpotent  Lie coalgebra $\g$, its  Chevalley-Eilenberg complex $\CE(\g)$ is a cofibrant commutative augmented algebra (though it is not cofibrant as an associative dg algebra). This suggests constructing similarly a Koszul duality starting from the model category of the second kind of modules over  an augmented cofibrant commutative  algebra. This is what we will do in this section.
	\subsection{Bar-cobar adjunctions}\label{subsection:barcobar} Let $A$ be an augmented algebra with augmentation ideal $\bar{A}$. Let us recall that the \emph{bar construction} of $A$ is the  cofree conilpotent coalgebra $B(A):=(\Tcon(\Sigma \bar{A}), d_B)$ with the differential $d_B$ induced from the internal differential in $A$ and the multiplication in $A$. The \emph{extended} bar construction is the cofree (nonconilpotent) coalgebra $\check{B}(A):=(\Tc(\Sigma \bar{A}),d_{\check{B}})$ where the differential $d_{\check{B}}$ is similarly induced by the internal differential in $A$ and its associative multiplication; the details of this construction can be found in \cite{GL}. The extended bar construction has a  coaugmentation $\epsilon:\ground\to \check{B}(A)$, and $B(A)$ can be identified with the maximal conilpotent  subcoalgebra in $\check{B}(A)$ containing $\epsilon(\ground)$. Alternatively, $B(A)$ can be defined through its dual pc algebra $B^*(A)\cong \hat{T}(\Sigma^{-1}\bar{A}^*)$ and $\check{B}(A)$ through its dual pc algebra $\check{B}^*(A)\cong \check{T} (\Sigma^{-1}\bar{A}^*)$. The algebra $\check{B}^*(A)$ is augmented (where the augmentation ideal is generated by the elements in $\bar{A}^*$, and the local algebra $B^*(A)$  is obtained by localizing at the augmentation ideal.
	
	Now suppose that $A$ is commutative. In that case $B^*(A)$ is a complete Hopf algebra where the diagonal is specified by declaring the elements in $\bar{A}^*$ to be primitive: $\Delta(\alpha)=\alpha\otimes 1+1\otimes \alpha$ for $\alpha\in \Sigma^{-1}\bar{A}^*$; the compatibility of the differential $d_B$ with the diagonal follows from the commutativity of $A$. The bar construction $B(A)$  is then also a Hopf algebra with the cocommutative comultiplication being dual to the diagonal in $B^*(A)$.
	
	Next, the primitive elements $\operatorname{Prim}(B^*(A))$ form a (dg) Lie algebra and, since $B^*(A)$ is a completed tensor algebra on $\Sigma^{-1} \bar{A}^*$, this Lie algebra will be the completed free Lie algebra: $\operatorname{Prim}(B^*(A))\cong \hat{\Li}(\Sigma^{-1}  \bar{A}^*)$.
	\begin{defi}
		Let $A$ be an augmented commutative algebra.\begin{enumerate} \item
		The Harrison Lie coalgebra of $A$  is the Lie coalgebra dual to $\hat{\Li}(\Sigma^{-1} \bar{A}^*)$. It will be denoted by $\Harr(A)$.
	\item 	The universal enveloping coalgebra  of $\Harr(A)$ is called the modified bar construction of $A$:
	\[
	B'(A):=\Uc(\Harr(A)).
	\]
		\end{enumerate}
	\end{defi}
\begin{rem}
	It is clear that $\Harr(A)$, disregarding its differential, is the cofree conilpotent Lie coalgebra: $\Harr(A)\cong \Lcon(\Sigma \bar{A})$.  Considering the cofree (nonconilpotent) Lie coalgebra on $\Sigma \bar{A}$ leads to an extended version $\check{\Harr}(A)$ whose universal enveloping coalgebra is $\check{B}(A)$. We will not be concerned with this notion in the present paper and simply observe that there is a natural sequence of inclusions of  coalgebras:
	\[
	B(A)\subset B'(A)\subset \check{B}(A).
	\]
	Also note that $B'$ is a right adjoint functor from commutative algebras to Lie-conilpotent coalgebras whose left adjoint is the ordinary cobar construction $\Omega$; the proof is the same as for the corresponding statements about $B$ or $\check{B}$. cf. \cite[Section 6.1]{Posi} or \cite[Proposition 2.6]{GL}.
\end{rem}
Let $M$ be a module over a commutative algebra $A$; then ${B'}(A)\otimes M$ is a module over ${B'}^*(A)^{\op}\otimes {B'}^*(A)\otimes A$. Indeed, $B'(A)$ is a bimodule over ${B'}^*(A)$ which translates into an action of ${B'}^*(A)^{\op}\otimes {B'}^*(A)$ on $B'(A)$ and $A$ acts one the second tensor factor $M$.  Twisting by the canonical MC element $\xi\in {B'}^*(A)\otimes A$, we obtain a ${B'}^*(A)^{\op}\otimes [{B'}^*(A)\otimes A]^\xi$-module $({B'}(A)\otimes M)^{[\xi]}$. Disregarding the $[{B'}^*(A)\otimes A]^\xi$-module structure, we obtain a right ${B'}^*(A)$-module  structure on $({B'}(A)\otimes M)^{[\xi]}$ which comes from a  $B'(A)$-comodule (or, equivalently, a locally finite $\Harr(A)$-comodule).
\begin{defi}
	The locally finite $\Harr(A)$-comodule $({B'}(A)\otimes M)^{[\xi]}$ constructed above is called the modified bar construction of the $A$-module $M$. We will write $({B'}(A)\otimes M)^{[\xi]}$ as $B'(M)$.
\end{defi}
 Now let $N$ be a locally finite $\Harr(A)$-comodule (equivalently a $B'(A)$-comodule). Then $A\otimes N$  is a $A^{\op}\otimes A\otimes {B'}^*(A)^{\op}$-module (where $ A^{\op}\otimes A$ acts on $A$ and ${B'}^*(A)$ acts on the right on $N$) and we can twist it by the canonical MC element $\xi$ in  $ [A^{\op}\otimes {B'}^*(A)^{\op}]^{\xi}$ obtaining $(A\otimes N)^{[\xi]}$. Then $(A\otimes N)^{[\xi]}$ will be viewed as an $A$-module (forgetting the $ [A^{\op}\otimes {B'}^*(A)^{\op}]^{\xi}$-module structure).

 \begin{prop}
 	The functor $M\mapsto B'(M): A\MMod\to \Harr(A)\CComod$ is right adjoint to the functor $(A\otimes -)^{[\xi]}:\Harr(A)\CComod\to A\MMod$.
 \end{prop}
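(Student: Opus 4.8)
The statement is the commutative-algebra analogue of the bar--cobar adjunction of Section~\ref{section:Koszul1} and of Proposition~\ref{prop:coniladj}, and I would prove it in the same way. Fix an $A$-module $L$ and a locally finite $\Harr(A)$-comodule $N$; since $B'(A)=\Uc(\Harr(A))$, such an $N$ is the same thing as a $B'(A)$-comodule. The goal is to produce a natural isomorphism of complexes
\[
\Hom_{A\MMod}((A\otimes N)^{[\xi]},L)\cong\Hom_{\Harr(A)\CComod}(N,B'(L)),
\]
by identifying both sides with the single twisted Hom-complex $(N^*\hat{\otimes}L)^{[\xi]}$, exactly as in the proof of the bar--cobar adjunction theorem in Section~\ref{section:Koszul1}.

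For the left-hand side: the underlying graded $A$-module of $(A\otimes N)^{[\xi]}$ is the free module $A\otimes N$, so an $A$-linear map $A\otimes N\to L$ is determined by its restriction along $N\hookrightarrow A\otimes N$, which yields a graded isomorphism $\Hom_{A\MMod}((A\otimes N)^{[\xi]},L)\cong\Hom(N,L)$. Tracking the differential --- assembled from the internal differentials of $A$, $N$, $L$ and the twist by the canonical MC element $\xi\in{B'}^*(A)\otimes A$ --- shows that under this identification the differential is the twist by $\xi$, so the left-hand side is $(N^*\hat{\otimes}L)^{[\xi]}$. For the right-hand side: $B'(L)=(B'(A)\otimes L)^{[\xi]}$ has, as its underlying graded $B'(A)$-comodule, the cofree comodule $B'(A)\otimes L$, so a $B'(A)$-comodule map $N\to B'(A)\otimes L$ is the same as a graded linear map $N\to L$, namely its composite with $\varepsilon\otimes\id_L$ where $\varepsilon$ is the counit of $B'(A)$; this gives $\Hom_{\Harr(A)\CComod}(N,B'(L))\cong\Hom(N,L)$, and unwinding the differential on $B'(L)$ shows the induced differential is again the twist by $\xi$. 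Hence both sides equal $(N^*\hat{\otimes}L)^{[\xi]}$, and the resulting isomorphism is natural in $N$ and $L$ because every identification used is the canonical one.

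The one point that needs care --- and the main, if still routine, obstacle --- is the middle step: verifying that the twists on the two sides are governed by the \emph{same} element $\xi$, with the Koszul signs cancelling. For this one unwinds the structures set up in Section~\ref{subsection:barcobar}: that $(B'(A)\otimes M)^{[\xi]}$ is obtained by twisting the ${B'}^*(A)^{\op}\otimes{B'}^*(A)\otimes A$-action by $\xi\in{B'}^*(A)\otimes A$ and then passing to the underlying right ${B'}^*(A)$-module, i.e. $B'(A)$-comodule, structure; that $(A\otimes N)^{[\xi]}$ is obtained by twisting the $A^{\op}\otimes A\otimes{B'}^*(A)^{\op}$-action and passing to the underlying $A$-module structure; and that both instances of $\xi$ come from the single canonical MC element of the convolution algebra $\Hom(B'(A),A)$. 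Once this bookkeeping is done the adjunction and its naturality follow formally. (Alternatively, one can factor $(A\otimes-)^{[\xi]}$ as $B'(A)\CComod\to\Omega(B'(A))\MMod\to A\MMod$, as in the proof of Theorem~\ref{thm:conilduality}, and recover the adjunction by composing the comodule--cobar adjunction with extension of scalars along the algebra map $\Omega(B'(A))\to A$; but the direct Hom-computation above is shorter.)
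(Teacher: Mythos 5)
Your proposal is correct and follows essentially the same route as the paper: both identify $\Hom_{A\MMod}((A\otimes N)^{[\xi]},L)$ and $\Hom_{\Harr(A)\CComod}(N,B'(L))$ with the single twisted complex $[\Hom(N,L)]^{[\xi]}$, using freeness of $A\otimes N$ on one side and cofreeness of $B'(L)$ on the other. The paper's proof is just a terser version of your argument (it leaves the sign/twist bookkeeping to "inspection").
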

\begin{proof}
	Let $N$ be a $B'(A)$-comodule and $M$ be an $A$-module. Then, disregarding differentials, $\Hom_{B'(A)}(N, B'(M))\cong \Hom(N,M)$ since $B'(M)$ is a cofree $B'(A)$-comodule. Note that $\Hom(N,M)$ is a ${B'}^*(A)\otimes A$-module where ${B'}^*(A)$ acts on $N$ and $A$ acts on $M$; inspection shows that, as (dg) vector spaces we have $\Hom_{B'(A)}(N, B'(M))\cong [\Hom(N,M)]^{[\xi]}$. Similarly, $\Hom_A((N\otimes A)^{[{\xi}]},M)\cong  [\Hom(N,M)]^{[\xi]}$ and we are done.
\end{proof}	
We can now formulate our version of Koszul duality for the Harrison Lie coalgebra.
For its proof, recall that the definition of $\PerfII(A)$ extends to the case where $A$ itself is a dg category rather than a dg algebra, and moreover, given two dg algebras (or dg categories) $A$ and $A'$, we have
 a quasi-equivalence between $\PerfII(A\otimes A')$ and $\PerfII(\PerfII(A)\otimes \PerfII(A'))$ \cite[Lemma 4.16]{GHL}.

\begin{theorem}\label{thm:harrison}
	Let $A$ be a commutative augmented cofibrant algebra. \begin{enumerate}\item The pair of adjoint functors $(B',(-\otimes A)^{[\xi]})$ determines a Quillen adjunction between the model categories $A\MMod$ and $\Harr(A)\CComod$; moreover, the composition $B'\circ (-\otimes A)^{[\xi]}$ induces an endofunctor on the homotopy category of $\Harr(A)\CComod$ that is isomorphic to the identity functor.
	\item	If $A$ is connected (i.e. $A^0=\ground$ and $A^n=0$ for $n<0$), then the adjoint pair $(B',(-\otimes A)^{[\xi]})$ is a Quillen equivalence.
		\end{enumerate}
\end{theorem}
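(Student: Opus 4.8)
The plan is to imitate the proofs of Theorems~\ref{thm:coreflective} and~\ref{thm:nonneg} applied to the conilpotent Lie coalgebra $\g=\Harr(A)$, and to transfer their conclusions from $\CE(\Harr(A))$-modules to $A$-modules by means of the canonical algebra map $\phi\colon\CE(\Harr(A))\to A$. Here $\CE(\Harr(A))$ is always a cofibrant commutative algebra, $\phi$ is surjective (its image is a subalgebra of $A$ containing the algebra generators $\bar A$), and --- this is where the cofibrancy of $A$ enters, in a role exactly parallel to that of conilpotency in Section~\ref{section:Koszul1} --- $\phi$ is a quasi-isomorphism, so it is an acyclic fibration exhibiting $\CE(\Harr(A))$ as a cofibrant replacement of $A$. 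There are factorizations $(-\otimes A)^{[\xi]}\cong (A\otimes_{\CE(\Harr(A))}-)\circ\CE(\Harr(A),-)$ and, on the right adjoints, $B'\cong B\circ\phi^{*}$, where $\CE(\Harr(A),-)\dashv B$ is the adjoint pair of Theorem~\ref{thm:coreflective} for $\g=\Harr(A)$ and $\phi^{*}$ is restriction along $\phi$.

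For part (1), that $(B',(-\otimes A)^{[\xi]})$ is a Quillen adjunction is immediate: $(-\otimes A)^{[\xi]}$ preserves monomorphisms, hence cofibrations, and $B'$, producing cofree comodules, preserves fibrations. For the coreflection statement I would argue as in the proof of Theorem~\ref{thm:coreflective}: reduce by a filtered colimit to $N$ finite-dimensional, and show that for a finite-dimensional $\Harr(A)$-comodule $M$ the unit induces an isomorphism $[M,N]_{*}\to[M,B'((N\otimes A)^{[\xi]})]_{*}$. Since $B'((N\otimes A)^{[\xi]})$ is cofree, hence fibrant, the target is $H^{*}\Hom_{A}\big((M\otimes A)^{[\xi]},(N\otimes A)^{[\xi]}\big)$ by the bar--cobar adjunction, i.e.\ the cohomology of the appropriate twist of $\Hom(M,N)\otimes A$; and the source, via the standard cobar--bar resolution and Proposition~\ref{prop:onetwosided}, is the cohomology of the cobar construction $[\Omega(\Uc(\Harr(A)))\otimes\Hom(M,N)^{\ad}]^{[\xi]}$. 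The canonical Maurer--Cartan element of $\Hom(\Uc(\Harr(A)),A)$ gives an algebra map $\Omega(\Uc(\Harr(A)))\to A$ along which the two complexes are related by base change, reducing the claim to this map being a quasi-isomorphism; it factors as $\Omega(\Uc(\Harr(A)))\to\CE(\Harr(A))\xrightarrow{\phi}A$, the first arrow being a quasi-isomorphism by the computation in the proof of Theorem~\ref{thm:coreflective} (valid since $\Harr(A)$ is conilpotent) and the second being $\phi$; Proposition~\ref{prop:Koszulresolution} is what makes the underlying abelian case explicit.

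For part (2), granted part (1) it suffices to show $(-\otimes A)^{[\xi]}$ is essentially surjective on homotopy categories, and since that category is generated by finitely generated twisted modules it is enough to realize each such in the essential image. By the factorization above and the essential surjectivity of $\CE(\Harr(A),-)$ --- which holds by Theorem~\ref{thm:nonneg}, using that $\Harr(A)$ is non-negatively graded precisely because $A$ is connected (so that $\bar A$ lies in positive degrees) --- the essential image of $(-\otimes A)^{[\xi]}$ coincides with that of the base-change functor $A\otimes_{\CE(\Harr(A))}-$. But a finitely generated twisted $A$-module $(A\otimes V)^{[\eta]}$ is literally the base change $A\otimes_{\CE(\Harr(A))}(\CE(\Harr(A))\otimes V)^{[\tilde\eta]}$ of a twisted $\CE(\Harr(A))$-module, where $\tilde\eta$ is a lift of the Maurer--Cartan element $\eta$ along the acyclic fibration $\phi$ (obstructions to lifting vanishing because the kernel of $\phi$ is acyclic). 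Hence every such module lies in the essential image, so $(-\otimes A)^{[\xi]}$ is essentially surjective; combined with the full faithfulness from part (1) this gives the Quillen equivalence.

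The step I expect to be the main obstacle is the verification that $\phi\colon\CE(\Harr(A))\to A$ is a quasi-isomorphism --- the commutative counterpart of bar--cobar duality. I would prove it by filtering $A$ along a Sullivan filtration (using cofibrancy), reducing to the case $A=S(W)$ free, where $\Harr(S(W))$ is quasi-isomorphic to the abelian Lie coalgebra $\Sigma W$ and $\phi$ becomes the identity of $S(W)$, and then comparing spectral sequences for general cofibrant $A$. A related delicate point is that, in contrast with the comodule side, quasi-isomorphisms of algebras need not be weak equivalences for the model structures of the second kind (cf.\ Example~\ref{ex:deRham}); this is why Theorem~\ref{thm:nonneg}, and with it the connectedness hypothesis, is essential in part (2) and cannot be bypassed by a purely formal transfer along $\phi$.
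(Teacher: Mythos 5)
Your architecture for part (2) --- factoring $(-\otimes A)^{[\xi]}$ through $\CE(\Harr(A),-)$ and feeding in Theorem \ref{thm:nonneg} via the non-negative grading of $\Harr(A)$ --- is indeed the paper's strategy, but both halves of your argument break at the same pressure point: you transport statements across the quasi-isomorphism $\phi\colon\CE(\Harr(A))\to A$ as if quasi-isomorphisms of algebras were harmless for Maurer--Cartan twists. In part (1) you reduce the unit being a weak equivalence to the algebra map $\Omega(\Uc(\Harr(A)))\to A$ being a quasi-isomorphism ``by base change.'' That reduction is invalid: a quasi-isomorphism of algebras $B\to C$ does not induce a quasi-isomorphism of twisted complexes $(B\otimes N)^{[\xi]}\to(C\otimes N)^{[\xi]}$ unless the twist is pro-nilpotent, and here the coefficient comodule $\Hom(M,N)^{\ad}$ is only locally finite, not conilpotent, so no filtration collapses the twist. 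Concretely, $\ground[z,dz]\to\ground$ is a quasi-isomorphism while $\ground[z,dz]^{[dz]}$ is acyclic and its image over $\ground$ is not --- this is exactly the phenomenon of Example \ref{ex:deRham}, and part (1) is asserted for arbitrary cofibrant $A$, not only connected ones, so nothing excludes this configuration. The paper avoids $\CE(\Harr(A))$ entirely in part (1): it identifies the linear dual of the cone of the unit $N\to B'((N\otimes A)^{[\xi]})$ with the two-term resolution of Corollary \ref{cor:resol} for the restricted tensor algebra $\Tres(\Sigma^{-1}\bar{A}^*)$, whence the cone is absolutely acyclic. This is precisely why the paper introduced $\Tcres$ and Lemma \ref{lem:Lieconil}; your proposal never uses them and offers no substitute.

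The second gap is the Maurer--Cartan lifting in part (2). You lift $\eta\in A^{1}\otimes\End(V)$ along $\phi$ by appealing to the acyclicity of $\ker\phi$, but $\ker\phi$ is acyclic without being nilpotent, so the obstruction-theoretic correction $\tilde\eta_0\mapsto\tilde\eta_0-\mu$ produces a new defect $-\tilde\eta_0\mu-\mu\tilde\eta_0+\mu^{2}$ that does not live in a smaller piece of any complete filtration; the iteration need not converge, and in the second-kind setting surjective quasi-isomorphisms genuinely fail to induce bijections on twisted modules (again Example \ref{ex:deRham}). The paper replaces this step by the Bousfield--Gugenheim structure theorem: since $A$ is connected and cofibrant, $A\cong M\otimes D$ and $\CE(\Harr(A))\cong M\otimes D'$ with the same minimal factor $M$ and contractible factors built from $S[x,y]$, $dx=y$, $|x|,|y|>0$; such factors are pronilpotent, so $\PerfII(S[x,y])\simeq\PerfII(\ground)$, and hence $\phi$ induces an equivalence on $\PerfII$. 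Connectedness enters exactly to force $|x|>0$ --- for $|x|=0$ one gets $\ground[z,dz]$ and the argument (and the theorem) fails. So your instinct that connectedness cannot be bypassed is right, but your proof needs this decomposition (or some equivalent control of the contractible part) rather than an MC-lifting argument.
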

\begin{proof}
	It is immediate that the right adjoint functor $B'(-)$ takes fibrations of $A$-modules into fibrations of $B'(A)$-comodules whereas the left adjoint functor $(-\otimes A)^{[\xi]})$ takes cofibrations of $B'(A)$-comodules into cofibrations of $A$-modules.
	
	For (1) consider the cone $C(i)$ of the unit adjunction $i:N\to (B'(A)\otimes A\otimes N))^{[\xi\otimes 1+1\otimes \xi]}$. Recall that, disregarding the differential in $A$ and the bar-differential in $B'(A)$ there is an isomorphism $B'(A)\cong \Tcres(\Sigma\bar{A})$ and also $A\cong \bar{A}\oplus\ground$. On the other hand, there is a two-term resolution of the pc  $\Tres(\Sigma^{-1}\bar{A^*})$-module $N^*$, cf. Lemma \ref{lem:Lieconil} whose linear dual is isomorphic to $C(i)$. It follows that $C(i)$ is coacyclic (even absolutely acyclic) and so, $i$ is a weak equivalence of $B'(A)$-comodules. This proves (1).
	
	Now suppose that $A$ is connected, then $\Harr(A)$ is nonnegatively graded. We will use the fact that the pair of functors $(\Harr, \CE)$ form an adjunction between conilpotent Lie coalgebras and commutative algebras; for this see, e.g. \cite[Section 9]{LM} where, however a different terminology and notation was adopted in op.cit. The counit map of this adjunction is
	$\CE(\Harr(A))\to A$.
	
	Note the following commutative diagram
	\[
	\xymatrix{
\Omega B'(A)\ar^f[rr]\ar_{i_1}[dr]&&\CE(\Harr(A))\ar^{i_2}[dl]\\
&A	
}
	\]
where the downward arrows $i_1$ and $i_2$ are units of suitable adjunctions and the horizontal arrow $f$ is the canonical map $\Omega\Uc(\g)\to \CE(\g)$ defined for any Lie coalgebra $\g$ (in this case for $\g=\Harr(A)$).	We will consider the canonical element $\xi\in \MC(\Omega B'(A)\otimes B'(A))$ and its image $(f\otimes \operatorname{id})(\xi)\in \CE(\Harr(A))\otimes B'(A)$. It is clear that $(i_1\otimes \operatorname{id})(\xi)$ is the canonical MC element in $A\otimes B'(A)$, abusing the notation, we will write $\xi$ for either of these canonical MC elements.

 It follows that the functor $N\mapsto (N\otimes A)^{[\xi]}:\Harr(A)\MMod\to A\MMod$ can be factored as a composition \[N\mapsto \CE(\Harr(A), N)\cong (\CE(\Harr(A))\otimes N)^{[\xi]}\mapsto (A\otimes N)^{[\xi]}\] where the first functor is a Quillen equivalence $\Harr(A)\MMod\to\CE(\Harr(A))\MMod$ by Theorem \ref{thm:coreflective} and the second is induced by the counit map $\CE(\Harr(A))\to A$. Therefore, it suffices to show that the latter map induces an equivalence on $\PerfII$.

 Note that the map $\CE(\Harr(A))\to A$ is the standard cofibrant replacement of the cofibrant commutative algebra $A$. There are two model structures for  commutative algebras: one suitable for $\Z$-graded algebras \cite{Hinich} and the other admitting only non-negatively graded ones \cite{BG}; in either structure weak equivalences are simply quasi-isomorphisms. Since $A$ is connected, it is cofibrant in either model structure. Furthermore, it is known, cf. \cite[Proposition 7.11]{BG} that any connected cofibrant commutative algebra $X$ is isomorphic to the tensor product of a minimal algebra (determined up to an isomorphism by the weak equivalence class of $X$) and a collection of copies of contractible algebras of the form $S[x,y]$ with $dx=y$. We can thus write
 $A\cong M\otimes D$ and $ \CE(\Harr(A))\cong M\otimes D'$ where $D$ and $D'$ are contractible factors.

 Note that the algebra $S[x,y]$ with $dx=y$ and $|x|,|y|>0$ can be viewed as a pc algebra and thus, arguing as in the proof of Proposition \ref{prop:coderivedabelian}, we see that its $\PerfII$ is the same as that of $\ground$. It follows that
  $\PerfII(A)\simeq \PerfII(M\otimes D)\simeq \PerfII(M)$ and similarly $\PerfII(\CE(\Harr(A)))$ is also quasi-equivalent to $\PerfII(M)$. Moreover, clearly the canonical map $\CE(\Harr(A))\to A$ (which is a weak equivalence) induces an equivalence on $\PerfII$ as required.
\end{proof}	
\subsection{Conilpotent Koszul duality for commutative algebras} For competentness, we formulate the conilpotent Koszul duality for modules over commutative algebras; this is in some sense parallel to the material of Subsection 4.5 and is a straightforward consequence of standard associative Koszul duality. Let $A$ be an augmented commutative algebra, not necessarily cofibrant. We consider the category $A\MMod$ of $A$-modules.  Arguing as in
\S\ref{subsection:barcobar} we construct an adjoint pair of functors
$(B, (A\otimes-)^{[\xi]})$ between the category $A\MMod$ and the category of \emph{conilpotent} $\Harr(A)$-comodules. Note that the latter is isomorphic to the category of $\Ucon(\Harr(A))\CComod$ and as such, has a model structure.
\begin{theorem}\label{thm:conil2}
The adjoint pair
$(B, (A\otimes-)^{[\xi]})$ determines a Quillen equivalence between the category  $A\MMod$ with its standard projective model structure (so that weak equivalences are quasi-isomorphisms) and the category of \emph{conilpotent} $\Harr(A)$-comodules.
\end{theorem}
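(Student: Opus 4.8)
The plan is to deduce the statement from standard associative bar--cobar Koszul duality, in direct parallel with the proof of Theorem~\ref{thm:conilduality}. The first step is to identify the target category explicitly. Forgetting its differential, $\Harr(A)$ is the cofree conilpotent Lie coalgebra $\Lcon(\Sigma\bar A)$, so $\Ucon(\Harr(A))\cong\Tcon(\Sigma\bar A)$; keeping track of differentials this coalgebra is precisely the ordinary bar construction $B(A)$, since $B(A)$ is the maximal conilpotent subcoalgebra of $\Uc(\Harr(A))$. Hence, by Proposition~\ref{prop:conilpotentmodules}, the category of conilpotent $\Harr(A)$-comodules is isomorphic to $B(A)\CComod$ with its model structure, and under this identification the adjoint pair $(B,(A\otimes-)^{[\xi]})$ is exactly the bar--cobar adjunction between $B(A)$-comodules and $A$-modules associated with the canonical twisting cochain $B(A)\to A$.

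The second step is to factor the left adjoint. The canonical Maurer--Cartan element $\xi\in B^*(A)\otimes A$ is the bar--cobar twisting cochain, and it induces an algebra map $\Omega(B(A))\to A$. As in the proof of Theorem~\ref{thm:conilduality}, the functor $(A\otimes-)^{[\xi]}\colon B(A)\CComod\to A\MMod$ then factors as
\[
B(A)\CComod\longrightarrow \Omega(B(A))\MMod\longrightarrow A\MMod,
\]
where the first functor is the cobar construction of a comodule and the second is extension of scalars along $\Omega(B(A))\to A$. Both functors are left Quillen, so it remains to check that each is a Quillen equivalence.

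For the first functor this is precisely the conilpotent Koszul duality between comodules over a conilpotent coalgebra and modules over its cobar construction, see \cite[Section 8.4]{Posi}; it applies because $B(A)$ is conilpotent for every augmented algebra $A$. For the second, extension of scalars along a quasi-isomorphism of dg algebras is a Quillen equivalence for the projective model structures, and $\Omega B A\to A$ is a quasi-isomorphism --- the classical acyclicity of the cobar construction of a bar construction, valid for any augmented dg algebra. This is exactly the point at which one sees why, unlike in Theorem~\ref{thm:conilduality}, no hypothesis such as cofibrancy or conilpotency on $A$ is required. Composing, $(A\otimes-)^{[\xi]}$ is a left Quillen equivalence with right adjoint $B$, which is the claim.

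Almost everything here is a direct appeal to \cite{Posi}: in particular the fact that $(B,(A\otimes-)^{[\xi]})$ is a Quillen adjunction needs no separate argument, since a composite of left Quillen functors is left Quillen. The one place requiring some care --- and the only real, if minor, obstacle --- is the bookkeeping in the first two steps: one must check that $(A\otimes-)^{[\xi]}$ genuinely coincides with the displayed composite, i.e.\ that the canonical twisting cochains involved ($\xi\in B^*(A)\otimes A$, the universal one in $\Omega(B(A))\otimes B(A)$, and its image under $\Omega(B(A))\to A$) correspond correctly under the identification $\Ucon(\Harr(A))\cong B(A)$, and likewise that the right adjoints $B$ match. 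Once this is verified, the proof is complete.
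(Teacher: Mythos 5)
Your proposal is correct and follows essentially the same route as the paper: identify the category of conilpotent $\Harr(A)$-comodules with $B(A)\CComod$ via $\Ucon(\Harr(A))\cong\Tcon(\Sigma\bar A)\cong B(A)$, and then invoke the standard associative bar--cobar Koszul duality of \cite[Theorem 6.3 and Section 8.4]{Posi}. The paper states this in one line; your factorization through $\Omega(B(A))\MMod$ and the acyclicity of $\Omega B(A)\to A$ is just a careful unpacking of the same references, and your observation that no cofibrancy or conilpotency hypothesis on $A$ is needed is consistent with the theorem as stated.
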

\begin{proof}
	The statement of the theorem follows at once from the statement of ordinary Koszul duality (Quillen equivalence) between $A\MMod$ and $B(A)\CComod=\Ucon(\Harr(A))\CComod$, cf. \cite[Theorem 6.3]{Posi} and \cite[Section 8.4]{Posi}.
\end{proof}

\subsection{Rational homotopy theory and $L_\infty$-local systems}\label{subsection:inftylocal} A large class of examples of cofibrant commutative algebras comes from rational homotopy theory \cite{BG}. Let $X$ be a topological space and $A(X)$ be its Sullivan minimal model, i.e. a cofibrant commutative algebra quasi-isomorphic to $C^*(X)$, the algebra of singular cochains on $X$. Note that $A(X)$ is an invariant of the weak homotopy type of $X$ and, if $X$ is nilpotent of finite type,  faithfully records its rational homotopy type. Thus, the homotopy category of $A(X)\MMod$ (or, equivalently, that of $\Harr(A(X))\CComod$) is a homotopy invariant of $X$. We will see that $A(X)$-modules are related to  $\infty$-local systems on $X$, also known as cohomologically locally constant sheaves on $X$, cf. \cite{BS, CHL, AVV} regarding this notion, at least when $X$ is a manifold. We will now briefly outline this relationship.

Suppose $X$ is a connected smooth manifold and $\operatorname{DR}(X)$ is the de Rham algebra of differential forms on $X$; we set $\ground=\mathbb{R}$. Then a finitely generated twisted $\operatorname{DR}(X)$-module or a retract of such determines an $\infty$-local system on $X$, cf. \cite{CHL}. Rational homotopy theory gives a quasi-isomorphism $A(X)\to \operatorname{DR}(X)$ and so, a functor $\PerfII(A(X))\to\PerfII(\operatorname{DR}(X))$. The objects of the dg category $\PerfII(A(X))$ will be called \emph{$L_\infty$-local systems} on $X$.  In other words, we obtain a functor $F$ from $L_\infty$-local systems on $X$ to $\infty$-local systems on $X$. This functor deserves further study which, however, would take us too far afield; so we will limit ourselves with a few simple observations.
\begin{itemize}
	\item The category $L_\infty(X):=\PerfII(A(X))$ is a homotopy invariant of $X$.
	\item Cofibrant (in the usual sense) $A(X)$-modules correspond to conilpotent $\Harr(A(X))$-modules; the corresponding $\infty$-local systems represent objects in $\PerfII(\operatorname{DR}(X))$. If such an $\infty$-local  system is given by a representation of $\pi_1(X)$, i.e. is an ordinary local system, this representation is given by unipotent matrices.
	\item Since the isomorphism class of $A(X)$ only records the nilpotent rational homotopy type of $X$, one does not expect a close relationship between $L_\infty$-local systems on $X$ and $\infty$-local systems on $X$ when $X$ is not nilpotent (e.g. when $\pi_1(X)$ is not nilpotent).
	\item Even when $X$ is nilpotent, the functor $F$ is not an equivalence on the level of homotopy categories, in general. Indeed, let $X$ be aspherical, i.e. all higher homotopy groups of $X$ vanish. In that case $\infty$-local systems on $X$ reduce to ordinary local systems on $X$, which in turn, are equivalent to flat vector bundles on $X$. Since the algebra $A(X)$ is local, every projective module over it is free, and so the resulting perfect $\operatorname{DR}(X)$-module will likewise be free. That means that the corresponding flat vector bundles on $X$ will be topologically trivial. In other words, local systems corresponding to topologically nontrivial flat bundles over $X$, are not in the image of $F$.
\end{itemize}
\begin{example}
Let $X=S^1$. In that case $A(X)$ is the graded algebra $H^*(X)\cong\Lambda(x)$ with $|x|=1$. Any MC element in $\Lambda(x)$ has the form $\xi_r:=rx$ where $r\in\mathbb{R}$. The corresponding twisted $\Lambda(x)$-module $\Lambda(x)^{[\xi]}$ gives rise to an $L_\infty$-local system on $S^1$ which is in this case an ordinary local system corresponding to the character $e^r$ of $\pi_1(S^1)\cong\mathbb{Z}$. The local system corresponding to the character $-1$ is not in the image of the functor $F$.

More generally, every $n$-dimensional $L_\infty$-local system on $S_1$ is an ordinary local system and it corresponds to an $n$-dimensional representation of $\mathbb{Z}$, i.e. an invertible $n\times n$ matrix $M$. Such a local system is in the image of $F$ if and only if $\det M>0$.
\end{example}

 \noindent
{\bf Acknowledgments. }The second author is grateful to Leonid Positselski for many useful discussions related to the subject of  present paper, and to the Jilin University where substantial part of this work was completed for hospitality and excellent working conditions.  


\begin{thebibliography}{99}
	
\bibitem{AVV} C. Arias Abad, A. Quintero V\'elez and S. V\'elez V\'asquez, {\em An $A_\infty$ version of the Poincar\'e lemma}, Pacific J. Math. 302 (2019), no. 2, 385--412.

\bibitem{Block} R. Block, {\em Commutative Hopf algebras, Lie coalgebras, and divided powers,} J. Algebra  96 (1985), no. 1, 275--306.

\bibitem{BS} J. Block and A. Smith,  {\em The higher Riemann-Hilbert correspondence},  Adv. Math. 252 (2014), 382--405.

\bibitem{BL} M. Booth and A. Lazarev, {\em Global Koszul duality},  \texttt{arXiv:2304.08409}.

\bibitem{BG}
A. Bousfield and V. Gugenheim, {\em  On {${\rm PL}$} de {R}ham theory and rational homotopy type}, Mem. Amer. Math. Soc. 8 (1976), no. 179, ix+94 pp.

\bibitem{BJ} K. Brown, J. Zhang,
{\em Dualising complexes and twisted Hochschild (co)homology for Noetherian Hopf algebras},
J. Algebra, 320, no. 5 (2008), 1814--1850.

\bibitem{CE} H. Cartan and S. Eilenberg, {\em Homological algebra}, Princeton Landmarks Math. Princeton University Press, Princeton, NJ, 1999.

\bibitem{CHL} J. Chuang, J. Holstein and  A. Lazarev, {\em Maurer-Cartan moduli and theorems of Riemann-Hilbert type}, Appl. Categ. Stru. 29 (2021), no. 4, 685--728.

\bibitem{DT} L. Du and Y. Tan, {\em Coderivations, abelian extensions and cohomology of Lie coalgebras}, Comm. Algebra 49 (2021), no. 10, 4519--4542.

\bibitem{GL} A. Guan and A. Lazarev, {\em Koszul duality for compactly generated derived categories of second kind}, 	J. Noncommut. Geom. 15 (2021), no. 4, 1355--1371.


\bibitem{GHL} A. Guan, J. Holstein and A. Lazarev, {\em Hochschild cohomology of the second kind: Koszul duality and Morita invariance}, 	 \texttt{arXiv:2312.16645}.

\bibitem{Hinich} V. Hinich, {\em Homological algebra of homotopy algebras}, Comm. Algebra  25 (1997), no. 10, 3291--3323.


\bibitem{HL} J. Holstein and A. Lazarev, {\em Enriched Koszul duality for dg categories}, 	 \texttt{arXiv:2211.08118}.

\bibitem{Kel} B. Keller, {\em A remark on Hochschild cohomology and Koszul duality}, in Advances in Representation Theory
of Algebras, Contemp. Math., vol. 761, AMS, (2021), 131--136.

\bibitem{LM} A. Lazarev and M. Markl, {\em Disconnected rational homotopy theory}, Adv. Math. 283 (2015), 303--361.

\bibitem{Mic}  W. Michaelis, {\em Lie coalgebras}, Adv. Math. 38 (1980), no. 1, 1--54.

\bibitem{Posi} L. Positselski,  {\em Two kinds of derived categories, Koszul duality, and comodule-contramodule correspondence,}  Mem. Amer. Math. Soc. 212 (2011), no. 996, vi+133 pp.

\bibitem{Posisurvey} L. Positselski, {\it Differential graded Koszul duality: An introductory survey}, Bull. Lond. Math. Soc. 55 (2023), no. 4, 1551--1640.

\bibitem{Qui} D.~Quillen,  {\em Rational homotopy theory},  Ann. of Math. (2) 90 (1969), 205--295.

\bibitem{SW} D. Sinha and B. Walter, {\em Lie coalgebras and rational homotopy theory II: Hopf invariants}, Trans. Amer. Math. Soc. 365 (2013), no. 2, 861--883.

\bibitem{sweedler} M. Sweedler, {\em Hopf algebras,} Math. Lecture Note Ser. W. A. Benjamin, Inc., New York, 1969.

\bibitem{Wi} S. Witherspoon,
{\em Hochschild cohomology for algebras},
Grad. Stud. Math., 204
AMS, Providence, RI, 2019.
\end{thebibliography}
\end{document}